\theoremstyle{plain}
	\newtheorem{theorem}{Theorem}[section]
	\newtheorem{lemma}[theorem]{Lemma}
\theoremstyle{remark}
\newcommand{\cK}	{\mathcal K}
\newcommand{\cP}	{\mathcal P}
\newcommand{\cQ}	{\mathcal Q}
\newcommand{\cR}	{\mathcal R}
\newcommand{\cT}	{\mathcal T}
\newcommand{\diag}[2]	{\ptwotwo {#1} 0 0 {#2}}
\newcommand{\eMiddle}	{e_1^{\odot \frac p 2} \odot e_2^{\odot \frac p 2}}
\newcommand{\GL}[1][n]	{\operatorname{GL}(#1)}
\newcommand{\Id}	{\operatorname{Id}}
\newcommand{\muTr}	{\tilde \mu}
\newcommand{\PL}[1][n]	{\operatorname{GL}^+ (#1)}
\newcommand{\ptwo}[2]	{\begin{pmatrix} #1 \\ #2 \\ \end{pmatrix}}
\newcommand{\ptwotwo}[4]{\begin{pmatrix} #1 & #3 \\ #2 & #4 \\ \end{pmatrix}}
\newcommand{\R}		{\mathbb R}
\newcommand{\C}		{\mathbb C}
\newcommand{\spn}	{\operatorname{span}}
\newcommand{\SL}[1][n]	{\operatorname{SL}(#1)}
\newcommand{\Sym}[2][p]	{\mathrm{Sym}^{#1} ( #2 )}
\newcommand{\Ten}[2][p]	{( #2 )^{\otimes #1}}
\newcommand{\tilt}[1]	{\ptwotwo 1 0 {#1} 1 \cdot}
\newcommand{\ValCo}[2]	{\operatorname{TVal}^{#1}(\mathbb R^{#2})}
\newcommand{\ValCon}[2]	{\operatorname{TVal}_{#1}(\mathbb R^{#2})}
\newcommand{\ValCoZero}[2]	{\operatorname{TVal}^{#1}_0(\mathbb R^{#2})}
\newcommand{\ValCoOne}[2]	{\operatorname{TVal}^{#1}_1(\mathbb R^{#2})}
\newcommand{\ValCoEps}[2]	{\operatorname{TVal}^{#1}_{\varepsilon}(\mathbb R^{#2})}
\newcommand{\ValCoDescr}[3]	{\operatorname{TVal}^{#1}_{#3, F}(\mathbb R^{#2})}
\newcommand{\VL}[1][n]	{\operatorname{SL}^\pm (#1)}
\title{Centro-Affine Tensor Valuations}
\author{Christoph Haberl and Lukas Parapatits}
\date{}
\begin{document}
	\maketitle

	\begin{abstract}
		\noindent
		We completely classify all measurable $\SL$-covariant symmetric tensor valuations on convex
polytopes containing the origin in their interiors.
It is shown that essentially the only examples of such valuations are the moment tensor
and a tensor derived from $L_p$ surface area measures.
This generalizes and unifies earlier results for the scalar, vector and matrix valued case.
 \\[0.5cm]
		Mathematics subject classification: 52B45, 52A20
	\end{abstract}

	\section{Introduction}
		A map $\mu \colon \mathcal{S} \to \langle A, + \rangle$ defined on a collection of sets 
$\mathcal{S}$ with values in an abelian semigroup $\langle A, + \rangle$ is
called a valuation if
\[
	\mu ( P \cup Q ) + \mu ( P \cap Q ) = \mu ( P ) + \mu ( Q )
\]
whenever $P$, $Q$, $P \cup Q$, $P \cap Q \in \mathcal{S}$.

One of the most influential results from the classical Brunn-Minkowski theory is
\mbox{Hadwiger's} classification of continuous rigid motion invariant 
valuations $\mu \colon \cK^n \to \R$. Here, $\cK^n$ denotes the space of convex bodies,
i.e.\ non-empty compact convex subsets of $\R^n$ equipped with the Hausdorff metric.
Hadwiger showed that each such valuation is a linear combination of the intrinsic volumes.
The latter are of basic geometric nature and include volume, surface area, mean width and the Euler Characteristic.

Two fundamental quantities that are not covered by Hadwiger's theorem
are Blaschke's equi-affine and centro-affine surface area.
The latter is not translation invariant and in fact, does not even fit in the framework of the Brunn-Minkowski theory.
It does, however, belong to the so called $L_p$-Brunn-Minkowski theory,
which was shaped by Lutwak \cites{Lut93b, Lut96} in the mid 1990s.
It is based on Firey's $L_p$ addition of convex bodies
containing the origin in their interiors.
The set of all such convex bodies is denoted by $\cK_o^n$.
Since then, this theory has become a central part of modern convex geometry (see \cite{Sch13}*{Chapter 9}). 
The impact of the $L_p$ theory ultimately led to the discovery of an even more general framework:
The Orlicz-Brunn-Minkowski theory (see, e.g., \cites{HabLutYanZha09, garhugwei13, LutYanZha09jdg, 
LutYanZha09badv, ZhuZhouXu14, ZouXiong14, XiJinLeng14, GarHugWeiYe15, LR10, habpar13, Lud09}). 

A characterization of Blaschke's equi-affine and centro-affine surface area
was finally established in a landmark result by Ludwig and Reitzner in \cite{LR10},
where they classified the natural family of Orlicz affine surface areas.
However, one crucial part of the problem remained open
since one of their assumptions was a certain behavior of the maps on convex polytopes.
The first step to bridge this last gap had already been taken by Ludwig \cite{Lud02advval},
but the complete result was only established very recently by the authors \cite{habpar13}:

\begin{theorem}\label{AffHadwiger}
	A map $\mu \colon \cK_o^n\to\R$ is an upper semicontinuous $\SL$-invariant valuation if and only if 
	there exist constants $c_0,c_1, c_2\in\R$ and a function $\varphi\in\textnormal{Conc}(\R_+)$ 
	such that
	\[
		\mu (K)=c_0 \chi(K)+c_1 V(K)+c_2 V(K^*)+\Omega_{\varphi} (K)
	\]
	for all $K\in\cK_o^n$.
\end{theorem}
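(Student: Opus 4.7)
The sufficiency is routine: each of $\chi$, $V$, $V(\cdot^*)$, and $\Omega_\varphi$ is an upper semicontinuous valuation, and $\SL$-invariance holds in each case. For volume it is immediate from $\det = 1$; for polar volume, from the identity $(gK)^* = g^{-T} K^*$ together with $g^{-T} \in \SL$ whenever $g \in \SL$; and for $\Omega_\varphi$ from the standard transformation behavior of the cone-volume measure together with the concavity of $\varphi$, which is precisely what guarantees the upper semicontinuity.

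For necessity, let $\mu \colon \cK_o^n \to \R$ be an upper semicontinuous $\SL$-invariant valuation. The first step is to pass to polytopes: since $\cP_o^n$ is dense in $\cK_o^n$ and the candidate valuations share the same upper semicontinuous extension, it suffices to identify $\mu$ on $\cP_o^n$. Next, using the valuation property and an induction on the number of facets, one reduces to determining $\mu$ on simplices containing the origin in their interior. The subtlety here is that one cannot simply cone from the origin, since the resulting cells would have the origin on their boundary and leave $\cP_o^n$; instead, one must work with carefully chosen hyperplane cuts that keep every piece in the class, and track what happens as the origin approaches a facet in limiting configurations.

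On the family of $n$-simplices in $\cP_o^n$, the $\SL$-orbit is determined (up to relabeling of vertices) by the vector of barycentric coordinates of the origin and by the volume, which is itself $\SL$-invariant. Thus $\mu$ restricted to this family is encoded by a function of finitely many parameters. Inserting standard decompositions of a simplex into two subsimplices into the valuation identity yields a functional equation for this function whose solution space splits into four pieces: a constant (accounting for $c_0 \chi$), one proportional to volume ($c_1 V$), one which one recognizes via the polar-volume formula for a simplex as $c_2 V(K^*)$, and an infinite-dimensional family naturally parameterized by an arbitrary function $\varphi \colon \R_+ \to \R$, yielding the contribution $\Omega_\varphi$.

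The principal obstacle is the combinatorial step of reducing to simplices while remaining inside $\cP_o^n$, coupled with the multivariate functional equation that results. Once that equation is solved, the final task is to recover the regularity of $\varphi$: transporting the upper semicontinuity of $\mu$ to the parameter space forces $\varphi$ to be concave, and one must verify along one-parameter degenerations of simplices that no pathological solutions to the functional equation survive, so the classification closes up exactly with $\varphi \in \textnormal{Conc}(\R_+)$.
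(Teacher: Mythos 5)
This theorem is not actually proved in the present paper: it is quoted as background, with the proof split across Ludwig--Reitzner \cite{LR10} (which handles the part of the valuation that vanishes on polytopes, modulo an extra hypothesis on polytope behavior) and the authors' earlier paper \cite{habpar13} (which classifies the valuation on $\cP_o^n$ and removes that extra hypothesis). So there is no ``paper's own proof'' to compare against; what follows assesses your proposal on its own terms.

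There is a serious gap in the second paragraph of your proposal, at the step ``since $\cP_o^n$ is dense in $\cK_o^n$ and the candidate valuations share the same upper semicontinuous extension, it suffices to identify $\mu$ on $\cP_o^n$.'' This is false, and the falseness is exactly the substance of the theorem. Upper semicontinuity does \emph{not} give a unique extension from polytopes; indeed every Orlicz affine surface area $\Omega_\varphi$ vanishes identically on $\cP_o^n$, yet the family $\{\Omega_\varphi\}$ is infinite-dimensional. So after identifying $\mu$ on polytopes (which does give $c_0\chi+c_1V+c_2V(\cdot^*)$ there), one is left with a nonzero upper semicontinuous $\SL$-invariant valuation $\mu - c_0\chi - c_1V - c_2 V(\cdot^*)$ that vanishes on all polytopes, and the hard content of the theorem is to show that this remainder must be some $\Omega_\varphi$. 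Your proposal never engages with that; the functional-equation-on-simplices analysis in paragraphs three and four at best recovers the finite-dimensional part $c_0\chi + c_1V + c_2V(\cdot^*)$, and the appearance of ``an infinite-dimensional family $\ldots$ yielding $\Omega_\varphi$'' is inconsistent with the fact that $\Omega_\varphi$ restricted to simplices is zero for every $\varphi$. The reduction to simplices is also misstated: cutting a polytope in $\cP_o^n$ in the naive way produces cells with the origin on the boundary, which is why the actual polytope argument (Ludwig \cite{Lud02advval}, \cite{habpar13}) works with double pyramids and $\SL$-images thereof, not simplices.

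In short: the sufficiency remarks are fine, and the polytope classification sketch is in the right spirit (though the decomposition should be into double pyramids, not simplices), but the passage from $\cP_o^n$ to $\cK_o^n$ is where the real work lies and your proposal elides it with an incorrect density-plus-semicontinuity argument. To repair this you would need the Ludwig--Reitzner machinery that analyzes upper semicontinuous $\SL$-invariant valuations vanishing on polytopes and identifies them as Orlicz affine surface areas $\Omega_\varphi$ with $\varphi$ concave.
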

Here, $\chi$ denotes the Euler Characteristic, $V$ stands for volume, $K^*$ is the polar body
of $K$, and the $\Omega_{\varphi}$ are Orlicz affine surface areas. 
The reader is referred to Section \ref{prelim} and \cite{habpar13} for details. 

This centro-affine Hadwiger theorem has a discrete version for valuations defined on $\cP_o^n$, 
i.e.\ convex polytopes containing the origin in their interiors (see \cites{habpar13, HabParMoments}): 
A map $\mu \colon \cP_o^n\to\R$ is a measurable $\SL$-invariant valuation if and only if there exist constants 
$c_0, c_1, c_2 \in \R$ such that
	\[
		\mu (P)=c_0 \chi(P) + c_1 V(P) + c_2 V(P^*)
	\]
for all $P\in\cP_o^n$.

The aim of the present paper is to generalize this result to tensor valued valuations of arbitrary rank.
Such a generalization to the vector valued case was already established by Ludwig in \cite{Ludwig:moment},
where she characterized the moment vector, i.e.\ the centroid without volume normalization.
In a highly influential article, Ludwig \cite{Lud03} was also able to show the corresponding result for matrix valued valuations.
In both papers, she assumed compatibility with the whole \emph{general} linear group.
A version for the vector valued case that only assumes compatibility with the \emph{special} linear group
was very recently proved by the authors \cite{HabParMoments}.
The present article is the first one to establish a classification for tensor valuations of arbitrary rank in the context
of “centro-affine geometry”.

The study of tensor valuations became the focus of increased attention after Alesker's breakthrough \cite{Alesker99}.
The new techniques developed in this paper enabled him to prove a long sought after characterization
of the rigid motion compatible Minkowski tensors in \cite{Alesker00a}.
In recent years, tensor valuations were studied intensively 
(see, e.g., \cites{AleBerSchu, BerHug15, hugschsch08, Lud03, Lud11, HugSchLocTen, wannerer13, HabParMoments}). 
This is in part due to their applications in morphology and anisotropy analysis of cellular, granular or porous structures
(see, e.g., \cites{BeisArt02, SchrAl2011, SchAl2010, SchrAl2013}).

Let us give two examples of tensor valuations.
Write $\Sym{\R^n} \subseteq \Ten{\R^n}$ for the space of symmetric tensors of rank $p \in \{ 0, 1, \ldots \}$.
The first example is the moment tensor map $M^{p, 0} \colon \cP_o^n \to \Sym{\R^n}$. 
It is defined as
\[
	M^{p, 0}(P) = (n+p) \int_P x^{\odot p}\,dx ,
\]
where integration is with respect to Lebesgue measure and $\odot$ stands for the symmetric tensor product.
Ignoring constant factors, $M^{1, 0}(P)$ is the moment vector
and $M^{2, 0}(P)$ corresponds to the Legendre ellipsoid from classical mechanics.

The second example, $M^{0, p} \colon \cP_o^n \to \Sym{\R^n}$, is given by
\begin{equation}\label{eq: def M^{0,p}}
	M^{0, p}(P) = \int_{S^{n-1}} u^{\odot p}\, dS_p(P,u) .
\end{equation}
Here, $S^{n-1} \subseteq \R^n$ denotes the Euclidean unit sphere and $S_p(P, \cdot)$ is the
$L_p$ surface area measure of $P$ (see Section \ref{prelim} for details).
This tensor vanishes for $p = 1$
and corresponds to the Lutwak-Yang-Zhang ellipsoid from \cite{LutYanZha00duke} for $p = 2$.

We will prove that these are essentially the only examples of tensor valuations 
which are compatible with the $\SL$.
This compatibility is contained in the following definition.
The group $\GL$ acts naturally on $\Ten{\R^n}$	by
\[
	\phi \cdot x = \phi^{\otimes p} (x) 
\]
for all $\phi \in \GL$ and $x \in \Ten{\R^n}$.
A map $\mu \colon \cP_o^n \to \Sym{\R^n}$ is called $\SL$-covariant if
	\[	
		\mu ( \phi P ) = \phi \cdot \mu ( P )
	\]
for all $\phi \in \SL$ and each $P \in \cP_o^n$.	
Moreover, $\mu$ is called measurable if it is Borel measurable.
We are now in a position to state our main result in dimensions greater or equal than three.
	
\begin{theorem}\label{th: MainN}
	Let $p \geq 2$ and $n \geq 3$. 
	A map $\mu \colon \cP_o^n \to \Sym{\R^n}$ is a measurable
	$\SL$-covariant valuation if and only if there exist constants $c_1, c_2 \in \R$ such that
	\[
		\mu(P) = c_1 M^{p, 0}(P) + c_2 M^{0, p}(P^*)
	\]
	for all $P \in \cP_o^n$.
\end{theorem}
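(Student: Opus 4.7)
The plan splits into a routine and a substantive direction. For the easy \emph{if} direction, I would verify that $P \mapsto M^{p,0}(P)$ and $P \mapsto M^{0,p}(P^*)$ are measurable $\SL$-covariant valuations on $\cP_o^n$. The former follows from a change-of-variables computation; the latter combines the identity $(\phi P)^* = \phi^{-T} P^*$, the $\SL$-covariance of the $L_p$ surface area measure, and the known valuation property of $P \mapsto \int_{S^{n-1}} u^{\odot p}\, dS_p(P^*,u)$ on $\cP_o^n$ from the $L_p$ Brunn--Minkowski theory.

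For the substantive \emph{only if} direction, let $\mu$ be given. I would first reduce to simplices with a vertex at $0$. Every $P \in \cP_o^n$ is triangulated by the cones over its facets, and iterating this with the valuation property expresses $\mu(P)$ as a signed sum of values of $\mu$ on simplices of the form $[0, v_1, \ldots, v_n]$. Combined with $\SL$-covariance, it suffices to describe $\mu$ on the standard simplex $T = [0, e_1, \ldots, e_n]$ together with its images under diagonal and unipotent $\SL$-maps. The stabilizer of $T$ inside $\SL$ contains the signed coordinate permutation matrices, so $\mu(T)$ is forced to lie in the subspace of $\Sym{\R^n}$ fixed by $\mathfrak{S}_n$; in the monomial symmetric basis $m_\lambda = \sum_\sigma e_{\sigma(1)}^{\odot \lambda_1} \odot \cdots \odot e_{\sigma(n)}^{\odot \lambda_n}$, indexed by partitions $\lambda$ of $p$ with at most $n$ parts, one has $\mu(T) = \sum_\lambda c_\lambda m_\lambda$. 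The task becomes to show that only two of the $c_\lambda$ survive.

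The heart of the argument is to produce enough functional equations among the $c_\lambda$ by dissecting sheared simplices. Following the strategy of \cite{HabParMoments}, I would examine $\mu$ on the family $S_t = \phi_t T$ for a suitable one-parameter unipotent $\phi_t \in \SL$, use $\SL$-covariance to write $\mu(S_t) = \phi_t \cdot \mu(T)$, and then dissect $S_{s+t}$ as a union of $S_s$ and a simplex derived from $S_t$. Reading off the resulting valuation identity componentwise in the basis $(m_\lambda)$ gives Cauchy-type functional equations among scalar coefficients; measurability promotes their solutions to linear ones, and iterating over a rich family of shears generates enough independent linear constraints to force all $c_\lambda$ to vanish except the two corresponding to $M^{p,0}$ and $M^{0,p}(\cdot^*)$. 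The two surviving constants are then pinned down by evaluating both sides at $T$ and at $T^*$.

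The main obstacle is exactly this combinatorial step. For $p \geq 2$ and $n \geq 3$ the dimension of the $\mathfrak{S}_n$-fixed subspace of $\Sym{\R^n}$ grows with $p$, yet only two parameters survive in the answer; one therefore needs shears whose tensor actions couple the various $c_\lambda$ in a sufficiently rich way to force all but two of them to zero. Managing the bookkeeping of these couplings, and verifying that the Cauchy equations produced are nondegenerate, is the delicate part of the proof.
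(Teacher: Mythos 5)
Your proposed reduction has a foundational gap at the very first step: you cannot dissect $P \in \cP_o^n$ into cones over facets and then appeal to the valuation property, because the resulting simplices $[0, v_1, \ldots, v_n]$ have the origin as a \emph{vertex}, not an interior point, and so are not elements of $\cP_o^n$. The valuation property only gives you information when $P$, $Q$, $P \cup Q$, and $P \cap Q$ all lie in $\cP_o^n$, so the intermediate pieces in your triangulation never enter the domain of $\mu$. In particular $\mu(T)$ for the standard simplex $T = [0, e_1, \ldots, e_n]$ is simply undefined, and the analysis of the $\mathfrak S_n$-fixed subspace of $\Sym{\R^n}$ built around it cannot get started. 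This is precisely the obstacle that distinguishes the present theorem from Ludwig's earlier $\GL$-covariant classifications on the larger class of polytopes containing the origin (not necessarily interior), where your cone dissection is legitimate.

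The paper circumvents this by invoking Ludwig's dissection result for $\cP_o^n$ (Theorem \ref{th: determined by values on SL(n)(R^n)}): a valuation on $\cP_o^n$ vanishing on all $\SL$-images of \emph{double pyramids} $[I, -c\binom{x}{1}, d\binom{y}{1}]$ vanishes identically. Double pyramids always contain the origin in their interiors, and the valuation identities among them (cutting by hyperplanes through the origin) stay within $\cP_o^n$. The resulting functional equations live on a sheared family of such pyramids and are solved by a tensor Cauchy argument (Lemmas \ref{le: tensor Cauchy}--\ref{le: solve homogeneous equation}). A second structural difference is that the paper does not attack $n \geq 3$ directly: it fully classifies the planar case (Theorem \ref{th: Main2}), where the answer space is $(p+1)$-dimensional, and then performs an induction over the dimension $n$ (in the $\SL$-contravariant formulation, Theorem \ref{th: n-dim, SL(n)-contravariant}), passing to the covariant side at the end via $\mu \mapsto \mu \circ{}^*$. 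The collapse from $p+1$ parameters in the plane to $2$ parameters for $n \geq 3$ happens during this induction, by comparing homogeneity degrees and using the interplay between the two coordinate-splitting maps $\mu_i[B,J]$ and $(\mu_i)_0[\tilde B, \tilde I_{n-1}]$. Your sketch gestures at the right kind of Cauchy-type constraints but, even setting aside the domain issue, offers no mechanism comparable to this dimension induction to explain why only two constants survive once $n \geq 3$.
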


The $L_p$ surface area measure appearing in \eqref{eq: def M^{0,p}} is a central notion of the $L_p$-Brunn-Minkowski theory. 
One of the major problems in the field, the so called $L_p$ Minkowski problem, asks which measures are $L_p$ surface area measures 
(see, e.g., \cites{BorLutYanZha, ChoWan06, Lut93b, Zhu2015}). 
Moreover, $L_p$ surface area measures found applications in such diverse fields as affine isoperimetric inequalities
(see, e.g., \cites{CamGro02adv, HS09jdg, LutYanZha00jdg}), Sobolev inequalities (see, e.g., \cites{HSX, CLYZ, LudXiaZha09}),
valuation theory (see, e.g., \cites{Schduke, Hab11, Lud05, Par10a, SchWa15}), 
and information theory (see, e.g., \cites{LutYanZha02duke, Lud11, Lutal2012}).

In \cite{Lut93b}, Lutwak introduced $L_p$ surface area measures in connection with Firey's $L_p$ addition of convex bodies.
In some way or the other, the occurrence of these measures is usually related to this $L_p$ addition.  
We want to emphasize that by Theorem \ref{th: MainN}, $L_p$ surface area measures naturally appear in a completely different context.
This clearly underlines the basic character of these measures. 
For an axiomatic characterization of $L_p$ surface area measures themselves, we refer to \cite{habpar13a}.

The operators occurring in Theorem \ref{th: MainN} are special members of the following family of valuations. 
For $r, s \in \{ 0, 1, \ldots \}$ with $r + s = p$ set
\[
	M^{r, s}(P) = \int_{\partial P} x^{\odot r} \odot u_x^{\odot s} \, h_P^{1-s}(u_x) \, d\mathcal{H}^{n-1}(x) . 
\]
Here, $h_P$ denotes the support function of $P$ and 
integration is with respect to $(n-1)$-dimensional Hausdorff measure over the boundary $\partial P$ of $P$.
The vector $u_x$ is the outer unit normal vector of $P$ at the boundary point $x$. 
Note that $\mathcal{H}^{n-1}$ almost all boundary points have a unique outer unit normal $u_x$.
To the best of our knowledge, these operators have not yet been studied in general.

Except for the extreme cases $M^{p, 0}$ and $M^{0, p} \circ \, ^*$, the members of the above family are not $\SL$-covariant.
However, in the plane they can be modified in a simple way so that they possess this $\SL$-covariance.
Indeed, we denote by $\rho$ the counter-clockwise rotation about an angle of $\frac \pi 2$ and set
\[
	M^{r, s}_{\rho}(P) = \int_{\partial P} x^{\odot r} \odot (\rho u_x)^{\odot s} \, h_P^{1-s}(u_x) \, d\mathcal{H}^1(x) . 
\]
The planar version of Theorem \ref{th: MainN} then reads as follows.

\begin{theorem}\label{th: Main2}
	Let $p \geq 2$.
	A map $\mu \colon \cP_o^2 \to \Sym{\R^2}$ is a measurable
	$\SL[2]$-covariant valuation if and only if there exist constants $c_0, \ldots, c_{p-2}, c_p, c_{p+1} \in \R$ such that
	\[
		\mu(P) = \sum_{\substack{i=0 \\ i \neq p-1}}^p c_i M_{\rho}^{i, p-i}(P) + c_{p+1} \rho \cdot M^{p, 0}(P^*)
	\]
	for all $P \in \cP_o^2$.
\end{theorem}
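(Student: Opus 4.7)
The plan is to check directly that each operator in the decomposition is a measurable $\SL[2]$-covariant valuation. Measurability and the valuation property of the $M_\rho^{i, p-i}$ follow from their explicit boundary-integral definitions by the same arguments that are standard for $M^{r,s}$. The $\SL[2]$-covariance rests on the planar identity $\rho\,\phi\,\rho^{-1} = \phi^{-T}$ valid for all $\phi \in \SL[2]$: under the pushforward by $\phi$, the outer unit normal transforms by $\phi^{-T}$, which the extra $\rho$ converts into $\phi$ acting on the normal slots of the tensor. Combining this with the rule $h_{\phi P}(u) = h_P(\phi^T u)$ for the support function, the factors of $h_P^{1-s}$ cancel the remaining $\phi$-dependence correctly. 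The final term $\rho \cdot M^{p,0}(P^*)$ is handled by combining $\SL[2]$-contravariance of polarity with the standard $\SL[2]$-covariance of $M^{p,0}$.

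\textbf{The ``only if'' direction.} Following the centro-affine blueprint, the first step is to reduce to triangles containing $0$ as a vertex. A fan triangulation from $0$, together with an extension of $\mu$ to degenerate polytopes as in \cite{HabParMoments}, expresses $\mu(P)$ through the valuation identity in terms of $\mu$ on triangles $T$ with a vertex at $0$. Writing such a triangle as $T = \phi T_0$ with $T_0 = \operatorname{conv}\{0, e_1, e_2\}$ and $\phi \in \PL[2]$, and decomposing $\phi = s \cdot \phi'$ uniquely with $s = (\det \phi)^{1/2}$ and $\phi' \in \SL[2]$, the $\SL[2]$-covariance gives $\mu(T) = \phi' \cdot \mu(sT_0)$. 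Hence $\mu$ is entirely encoded in the measurable function $F \colon (0, \infty) \to \Sym{\R^2}$ given by $F(s) = \mu(sT_0)$.

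Expanding $F$ in the basis $\{e_1^{\odot i} \odot e_2^{\odot (p-i)}\}_{i=0}^p$ produces $p+1$ measurable scalar functions $f_0, \ldots, f_p$. Subdividing $sT_0$ along a line from $0$ to a point on the opposite edge and applying the valuation property, combined with $\SL[2]$-covariance of the two sub-triangle matrices, yields coupled Cauchy-type functional equations for the $f_i$. Measurability then forces $f_i(s) = c_i s^{\alpha_i}$, where the exponents $\alpha_i$ are dictated by the scaling homogeneities $2i - p + 2$ of $M^{i,p-i}_\rho$ and $-(p+2)$ of the polar term. A matching check using the action of $\rho \in \SL[2]$ itself, which permutes the basis of $\Sym{\R^2}$ in a controlled way, pins down the relations between the coefficients. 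The index $i = p-1$ drops out because $M^{p-1,1}_\rho \equiv 0$: in the plane, $\rho u_x$ is the unit tangent to $\partial P$, so $M^{p-1,1}_\rho(P) = \tfrac{1}{p}\int_{\partial P} d(x^{\odot p}) = 0$ by Stokes' theorem on the closed curve $\partial P$.

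\textbf{Main obstacle.} The essential difficulty relative to Theorem \ref{th: MainN} is that in dimension two the rotation $\rho$ belongs to $\SL[2]$, so the entire family $M^{i,p-i}_\rho$ consists of genuine $\SL[2]$-covariant valuations and the classification yields $p+1$ independent operators rather than two. The core of the work therefore lies in disentangling all $p+1$ weight components of $F(s)$ simultaneously, solving the coupled functional equations they satisfy, and matching them against the explicit expansions of $M^{i,p-i}_\rho(sT_0)$ and $\rho \cdot M^{p,0}((sT_0)^*)$, while keeping track of the one linear relation that eliminates $i = p-1$.
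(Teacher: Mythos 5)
Your ``if'' direction is essentially correct (the covariance check via $\rho\,\phi\,\rho^{-1}=\phi^{-t}$ is the right mechanism, and the Stokes-type argument for $M^{p-1,1}_\rho\equiv 0$ is valid), but the ``only if'' direction has a gap at its very first step. You propose to reduce $\mu$ to its values on triangles $T=\phi T_0$ with $T_0=\operatorname{conv}\{0,e_1,e_2\}$ via a fan triangulation from the origin. However, such triangles do \emph{not} contain the origin in their interior, hence do not belong to $\cP_o^2$, and $\mu$ is simply not defined there. An extension of $\mu$ to cones with apex at the origin cannot exist in general: the classification contains the term $\rho\cdot M^{p,0}(P^*)$, and the polar body of a polytope with $0$ on its boundary is unbounded, so this summand has no meaningful continuation to degenerate cones. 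The cited extension technique in \cite{HabParMoments} works in the vector case precisely because the answer there is a single moment-type valuation without any polar term; it cannot be transported to a setting where a polar term is present. Once the reduction fails, the rest of your argument collapses: $\mu$ cannot be encoded in a single one-variable function $F(s)=\mu(sT_0)$, and the claim that each component $f_i$ is a monomial $c_i s^{\alpha_i}$ does not follow.

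The paper instead reduces via Ludwig's theorem (Theorem \ref{th: determined by values on SL(n)(R^n)}) to the set $\cR^2$ of \emph{double pyramids} $\left[I,-c\binom{x}{1},d\binom{y}{1}\right]$, which do lie in $\cP_o^2$. These are parametrized by $a,b,c,d>0$ and shears $x,y\in\R$, so the describing object is a function of several variables, not one. This forces the much heavier machinery: splitting over pyramids (Lemma \ref{le: mu of straight double pyramid, VL(2)-covariant}), a sheared tensor Cauchy equation (Lemma \ref{le: tensor Cauchy}) to pin down the error term $K^I$, a normalization of the describing function $F$ and a non-trivial linear functional equation for it (Lemmas \ref{le: ptwotwo 0 frac 1 a a 0 F(a) = F(a)}--\ref{le: mu describable, linear equation, VL(2)-covariant}), the explicit solution of that equation (Lemma \ref{le: solve homogeneous equation}, Lemma \ref{le: solve equation, VL(2)}), and finally a dimension count using the $\varepsilon$-decomposition $\ValCo{p}{2}=\ValCoZero{p}{2}\oplus\ValCoOne{p}{2}$. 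None of this complexity is avoidable by your route, because your route starts from polytopes outside the domain of the valuation.
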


Denote by $\ValCo{p}{n}$ the vector space of measurable $\SL$-covariant valuations $\mu \colon \cP^n_o \to \Sym{\R^n}$.
As explained above, a complete classification of $\ValCo{0}{n}$ was established in \cites{habpar13, HabParMoments},
whereas the description of $\ValCo{1}{n}$ can be found in \cite{HabParMoments}.
In order to get a complete picture, we finally summarize these results and the main theorems of 
the present article.
\begin{theorem}\label{th: n-dim, SL(n)-covariant}
	For $n \geq 3$ the following holds.
	\begin{itemize}
		\item
		A basis of $\ValCo{0}{n}$ is given by $\chi$, $V$ and $V \circ \, ^*$.

		\item
    A basis of $\ValCo{1}{n}$ is given by $M^{1, 0}$.

		\item
    For $p \geq 2$, a basis of $\ValCo{p}{n}$ is given by $M^{p, 0}$ and $M^{0, p} \circ \, ^*$.
	\end{itemize}
\end{theorem}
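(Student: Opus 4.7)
The plan is to assemble the three assertions of the theorem from the corresponding classifications and, in each case, verify that the listed operators actually form a basis rather than merely a spanning set. The first two bullets require no new argument: the basis for $\ValCo{0}{n}$ is the discrete centro-affine Hadwiger-type theorem recalled in the introduction and proved in \cites{habpar13, HabParMoments}, and the basis statement for $\ValCo{1}{n}$ is one of the main results of \cite{HabParMoments}. What remains is therefore the case $p \geq 2$, which is the substance of this paper.

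For $p \geq 2$, Theorem \ref{th: MainN} asserts that every element of $\ValCo{p}{n}$ has the form $c_1 M^{p,0} + c_2 M^{0,p} \circ \, ^*$. To upgrade this to a basis statement, two things still need to be checked: that both $M^{p,0}$ and $M^{0,p} \circ \, ^*$ actually belong to $\ValCo{p}{n}$, and that they are linearly independent. Membership of $M^{p,0}$ is essentially immediate: measurability follows from Hausdorff-continuity of $P \mapsto \int_P x^{\odot p}\,dx$, the valuation property from inclusion-exclusion for the Lebesgue integral, and $\SL$-covariance from the change-of-variables formula combined with $|\det \phi| = 1$. The corresponding properties for $M^{0,p} \circ \, ^*$ reduce to the known valuation and transformation behavior of the $L_p$ surface area measure on $\cP_o^n$, combined with the standard interaction of polarity with unions, intersections, and linear maps (in particular $(\phi P)^* = \phi^{-T} P^*$); all of these ingredients are collected in Section \ref{prelim}.

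Linear independence is then verified by direct computation on two suitable test bodies. Evaluating both operators on, say, the standard simplex and a centered body such as the cube yields values in $\Sym{\R^n}$ whose components along appropriately chosen coordinate tensors form a $2 \times 2$ system with nonzero determinant, so that no nontrivial combination $c_1 M^{p,0} + c_2 M^{0,p} \circ \, ^*$ can vanish identically. The one genuine obstacle in the whole program is Theorem \ref{th: MainN} itself; once it is available, Theorem \ref{th: n-dim, SL(n)-covariant} follows by the compilation of the three classifications together with the straightforward verifications sketched above.
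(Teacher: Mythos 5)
Your proposal does not prove the theorem so much as reduce it to Theorem \ref{th: MainN}, and that reduction is circular with respect to the paper's logical structure. Theorem \ref{th: MainN} is not an independently established ingredient here: it is stated in the introduction as the headline result for $p \geq 2$, but the paper never gives it a standalone proof, precisely because it \emph{is} the third bullet of Theorem \ref{th: n-dim, SL(n)-covariant} (the ``if and only if'' formulation already encapsulates both membership and spanning, and the paper separately records linear independence in Section \ref{prelim} via homogeneity degrees and the non-vanishing facts \eqref{eq: M(B) neq 0, n dim}). So when you write that ``the one genuine obstacle in the whole program is Theorem \ref{th: MainN} itself,'' you have in fact deferred the entire content of the statement you were asked to prove. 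A similar remark applies to the first two bullets: the paper is explicit that it ``provides new proofs of the results in \cite{habpar13} and \cite{HabParMoments}'' by fitting them into the same inductive scheme, so the intended proof produces the $p=0$ and $p=1$ cases rather than importing them.

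The argument the paper actually uses runs through the contravariant side. One proves Theorem \ref{th: n-dim, SL(n)-contravariant} by induction on dimension: the base case is the planar contravariant classification (Theorem \ref{th: 2-dim, SL(2)-contravariant}, which is deduced from the covariant planar Theorem \ref{th: 2-dim, SL(2)-covariant} via the rotation $\rho$), and the inductive step restricts $\mu$ to double pyramids $[B,J]$ over a crosspolytope $B \in \cP^{n-1}_o$, applies the lower-dimensional classification componentwise, matches degrees of homogeneity via \eqref{eq: comparing homogeneity grades} to pin down the scalar coefficients, and finally invokes Lemma \ref{le: vanishes on crosspolytopes implies everywhere, SL(n)} together with Theorem \ref{th: determined by values on SL(n)(R^n)} to pass from crosspolytopes to all of $\cP_o^n$. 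Theorem \ref{th: n-dim, SL(n)-covariant} is then the one-line consequence obtained by applying the isomorphism $\mathcal S \mu = \mu \circ {}^*$ between $\ValCo{p}{n}$ and $\ValCon{p}{n}$. The membership and linear-independence checks you sketch are correct in spirit (and already carried out in Section \ref{prelim} using \eqref{eq: M homogeneous n dim}, \eqref{eq: polar homogeneity} and \eqref{eq: M(B) neq 0, n dim}, without needing to compare values on two special bodies), but they are the easy part; a genuine proof has to supply the induction or an equivalent route to the spanning statement.
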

As was mentioned before, the planar case is different and needs to be treated seperately.
\begin{theorem}\label{th: 2-dim, SL(2)-covariant}
	For $n = 2$ the following holds.
	\begin{itemize}
		\item
		A basis of $\ValCo{0}{2}$ is given by $\chi$, $V$ and $V \circ \, ^*$.

		\item
    For $p \geq 1$, a basis of $\ValCo{p}{2}$ is given by $M^{i, p-i}_\rho$ for
    $i \in \{ 0, \ldots, p \} \setminus \{ p-1 \}$ and $\rho \cdot M^{p, 0} \circ \, ^*$.
	\end{itemize}
\end{theorem}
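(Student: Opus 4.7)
The plan is to assemble planar classification results that have been established either here or in prior papers. The case $p = 0$ is the scalar Hadwiger-type statement recalled after Theorem~\ref{AffHadwiger}, proved in \cite{habpar13,HabParMoments}. The case $p = 1$ is the two-dimensional instance of the vector-valued classification from \cite{HabParMoments}; the planar peculiarity is that, thanks to $\rho \in \SL[2]$, the map $\rho \cdot M^{1, 0} \circ \, ^*$ is an $\SL[2]$-covariant valuation with no analogue in higher dimensions. For $p \geq 2$, Theorem~\ref{th: Main2} already exhibits every element of $\ValCo{p}{2}$ as a linear combination of the listed operators. What remains in all cases is to check that (i) each listed operator actually belongs to $\ValCo{p}{2}$ and (ii) the listed operators are linearly independent.

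For (i) the key input is the planar identity $\rho \phi^{-T} = \phi \rho$ for every $\phi \in \SL[2]$ (equivalently $\phi^{-T} = \rho^{-1} \phi \rho$), which reflects that in $\R^2$ the contragredient of the defining representation is $\rho$-conjugate to it. Combined with the transformation law $(\phi P)^* = \phi^{-T} P^*$ and the $\GL$-covariance of $M^{p, 0}$, this immediately yields the $\SL[2]$-covariance of $\rho \cdot M^{p, 0} \circ \, ^*$. The same identity, together with the fact that the outer unit normal of $\phi P$ at $\phi x$ is proportional to $\phi^{-T} u_x$, shows via a change of variables in the boundary integral that each $M^{i, p-i}_\rho$ is $\SL[2]$-covariant. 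Measurability and the valuation property of these boundary integrals are routine.

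For (ii) a clean argument is via dilation homogeneity: a direct scaling computation gives that $M^{i, p-i}_\rho$ is homogeneous of degree $2i - p + 2$ and $\rho \cdot M^{p, 0} \circ \, ^*$ of degree $-(p + 2)$. As $i$ ranges over $\{0, 1, \ldots, p\} \setminus \{p - 1\}$, these $p + 1$ exponents are pairwise distinct, so any linear relation splits into relations on each homogeneity component and forces every coefficient to vanish once each individual operator is known to be nontrivial, which is easily verified on the standard triangle. The main subtle point — and the step I expect to be the chief obstacle — is justifying the exclusion of $i = p - 1$: one must show that $M^{p - 1, 1}_\rho \equiv 0$ so that its omission from the list matches the dimension count in Theorem~\ref{th: Main2}. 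Pairing $M^{p - 1, 1}_\rho(P)$ with $w^{\otimes p}$ for arbitrary $w \in \R^2$ and integrating by parts produces a factor of $\langle w, \rho w \rangle$, which vanishes by the antisymmetry of $\rho$; hence $M^{p - 1, 1}_\rho$ is identically zero, and this short antisymmetry argument, together with the planar identity for $\phi^{-T}$, is the main conceptual content of the proof.
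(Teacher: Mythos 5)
Your proposal correctly handles the ``easy'' direction --- that the listed operators lie in $\ValCo{p}{2}$, that they are linearly independent (the paper does this by the same homogeneity-degree argument, cf.\ \eqref{eq: M homogeneous 2 dim} and \eqref{eq: polar homogeneity}), and that $M_\rho^{p-1,1}\equiv 0$ (an identity the paper records in Lemma~\ref{le: M on crosspolytopes}; your ``total derivative around a closed loop'' reasoning is essentially sound, though the step ``integrating by parts produces a factor of $\langle w,\rho w\rangle$'' is not quite what happens --- the integrand becomes $\tfrac1p\,d\langle x,w\rangle^p$ and the boundary integral vanishes because $\partial P$ is a cycle). The planar identity $\phi^{-t}=\rho^{-1}\phi\,\rho$ for $\phi\in\SL[2]$ is also exactly the right observation for the covariance check.

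However, there is a genuine circularity in the main step. You invoke Theorem~\ref{th: Main2} to conclude that for $p\geq 2$ the listed operators span $\ValCo{p}{2}$. But the paper never proves Theorem~\ref{th: Main2} independently: its only proof of the classification is the proof of Theorem~\ref{th: 2-dim, SL(2)-covariant} at the end of Section~3.2.3, and Theorem~\ref{th: Main2} is obtained from it. The entire analytic content of the planar case --- Lemmas~\ref{le: tensor Cauchy} through~\ref{le: solve equation, VL(2)}, i.e.\ the splitting over pyramids, the passage to the ``describing function'' $F$, the normalization of $F$, the derivation and solution of the functional equation \eqref{eq: F(t), VL(2)}, and the final dimension count via the spaces $V_\varepsilon$ and the injection into $V_\varepsilon$ given by Theorem~\ref{th: determined by values on SL(n)(R^n)} --- is precisely what establishes the spanning statement, and none of it appears in your argument. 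The same issue applies to your treatment of $p=1$: citing \cite{HabParMoments} for the two-dimensional vector case does not fit the logic of this paper, which deliberately gives a self-contained unified proof and moreover needs to account for the fact that $\ValCo{1}{2}$ is two-dimensional (spanned by $M^{1,0}$ \emph{and} $\rho\cdot M^{1,0}\circ {}^*$), not one-dimensional as in higher dimensions. In short, you have verified that the candidate set is a linearly independent subset of $\ValCo{p}{2}$, but the hard half --- that nothing else is in $\ValCo{p}{2}$ --- has been assumed rather than proved.
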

In this paper, we will actually provide a unified proof of Theorems \ref{th: n-dim, SL(n)-covariant}
and \ref{th: 2-dim, SL(2)-covariant}.
Therefore, we also provide new proofs of the results in \cite{habpar13} and \cite{HabParMoments} which fit into the general
context of tensor valuations with arbitrary rank.

	\section{Notation and preliminary results}\label{prelim}
		For later use, we collect in this section notation and basic facts.
Well known results about convex bodies are stated without references.
We refer the reader to the excellent books of 
Gardner \cite{Gar95}, Gruber \cite{Gruber:CDG}, and Schneider \cite{Sch13}
for more information.

Let us begin with two one-dimensional facts.
The first one is the solution to Cauchy's functional equation. 
As is well known, the only measurable functions $f \colon \R \to \R$ which satisfy
\[
	f(x + y) = f(x) + f(y)
\]
for all $x, y \in \R$ are the linear ones.
The same holds for functions $f \colon (0, \infty) \to \R$ and $f \colon \R^n \to \R$.
The second one is a version of Vandermonde's identity,
\begin{equation}\label{eq: Vandermonde}
	\sum_{j=0}^i \binom {-\frac p 2} {i-j} \binom {\frac p 2} {j} = 0
\end{equation}
for $i \geq 1$.
This follows from the equality $(1 + x)^{-\frac p 2}(1 + x)^{\frac p 2} = 1$
by comparing coefficients of the Taylor expansions of the involved functions.

Now, we turn towards higher dimensions.
The space $\R^n$, $n \geq 1$, will be equipped with the standard inner product and the norm induced by it.
Denote by $e_1,\ldots, e_n \in \R^n$ the canonical basis vectors and
write $S^{n-1}$ for the set of all unit vectors with respect to this norm.

Throughout this paper, we fix the standard basis of $\Ten{\R^n}$ induced by the canonical basis vectors $e_1, \ldots, e_n$.
For tensors $x_1, \ldots, x_p \in \Ten{\R^n}$, their symmetric tensor product is defined as 
\[
	x_1 \odot \cdots \odot x_p = \frac {1}{p!} 
	\sum_{\sigma \in \mathfrak{S_p}} x_{\sigma(1)} \otimes \cdots \otimes x_{\sigma(p)} ,
\]
where $\mathfrak{S_p}$ denotes the symmetric group of $\{1, 2, \ldots, p\}$.
Note that the normalization is chosen in such a way that $x \odot \cdots \odot x = x \otimes \cdots \otimes x$.
Let $K \in \Ten{\R^2}$ and $\alpha \in \{1, 2\}^p$ be a multiindex.
If $\phi \in \GL[2]$, then the action of $\phi$ on $K$ can be written as
\begin{equation}\label{eq: action on tensors}
	\phi \cdot K = \sum_{\alpha} \sum_{\beta} K_\beta \phi_{\alpha_1 \beta_1} \cdots \phi_{\alpha_p \beta_p} 
	               e_{\alpha_1} \otimes \cdots \otimes e_{\alpha_p} .
\end{equation}
Here, $K_\beta$ denote the coefficients of $K$ with respect to the basis we fixed before.
Multiindices will be viewed as being equipped with their standard partial order.
Therefore, we immediately arrive at	
	\begin{equation}\label{eq: tilt expanded}
  	\left[ \tilt z K \right]_{\alpha} = \sum_{\beta \geq \alpha} 
  	K_{\beta} z^{\vert\{i \colon \beta_i = 2\}\vert - \vert\{i \colon \alpha_i = 2\}\vert}.
  \end{equation}
For the space $\Sym{\R^2}$ of symmetric tensors we fix the basis 
\[
	e_1^{\odot p-i} \odot e_2^{\odot i} , \qquad i = 0, \ldots, p .
\]
Let $K \in \Sym{\R^2}$.
The coordinates of $K$ with respect to this basis are denoted by $K_i$.
For even $p$ and a $\phi \in \GL[2]$ we therefore have
\begin{equation}\label{eq: pth component middle term}
	\left[ \phi \cdot e_1^{\odot \frac p 2} \odot e_2^{\odot \frac p 2} \right]_p 
	= ( \phi_{2 \! 1} \phi_{2 2} )^{\frac p 2} .
\end{equation}
Relation \eqref{eq: tilt expanded} and a straightforward computation prove
\begin{equation}\label{eq: tilt expanded symmetric}
 	\left[ \tilt z K \right]_i = \sum_{j=i}^p \binom j i K_j z^{j-i} .
\end{equation}
This in turn yields
  \begin{equation}\label{eq: tilt expanded symmetric transpose}
  	\left[ \ptwotwo 1 z 0 1 \cdot K \right]_i = \sum_{j=0}^i \binom {p-j} {p-i} K_j z^{i-j} .
  \end{equation}
Let us briefly discuss a tensor version of Cauchy's functional equation.
Let $F \colon \R^n \to \Ten{\R^n}$ be a measurable function with
\[
	F(x + y) = F(x) + F(y)
\]
for all $x, y \in \R^n$.
Using the scalar Cauchy equation, it is not hard to show that the component functions of $F$ are linear.
Interpreting tensors as multilinear maps, it therefore follows that there exists an $\tilde F \in \Ten[p+1]{\R^n}$ 
such that
\begin{equation}\label{eq: tensor cauchy}
	F(x) (v_1, \ldots, v_p) = \tilde F (v_1, \ldots, v_p, x)
\end{equation}
for all $v_1, \ldots, v_p, x \in \R^n$.
In other words, $F$ can be interpreted as an element of $\Ten[p+1]{\R^n}$.

Next, we collect some facts about tensor integrals.
As usual, integrals over tensors are defined componentwise.
Thus, a straightforward calculation in combination with \eqref{eq: action on tensors}
proves for a continuous function $F \colon [a,b] \to \Ten{\R^n}$ that 
	\begin{equation}\label{eq: integral compatible SL(n)}
		\int_a^b \phi \cdot F(x) \,dx = \phi \cdot \int_a^b F(x) \,dx 
	\end{equation}
for all $\phi \in \GL$.
If $F \colon \R \to \Ten{\R^n}$ is continuous, then one can check the symmetry of its images 
by looking at certain integrals. Indeed, by the componentwise definition of the integral and 
differentiation we have
	\begin{equation}\label{eq: int symmetric}
		F(x) \in \Sym{\R^n} \quad \textnormal{for all } x \in \R
		\Longleftrightarrow 
		\int_0^x F(z) \,dz \in \Sym{\R^n} \quad \textnormal{for all } x \in \R .
	\end{equation}
We conclude our treatment of integrals with the following injectivity type properties.
For $K \in \Ten{\R^2}$, we infer from \eqref{eq: tilt expanded} that 
	\begin{equation}\label{eq: integral tilt kernel}
	  \int_0^x \tilt z K \,dz = 0 \quad \textnormal{for some } x \in \R \setminus \{ 0 \}
	  \quad \Longleftrightarrow \quad K = 0 .
	\end{equation}
Hence,
\[
	K \mapsto \int_0^1 \tilt z K \ dz
\]
is a linear isomorphism on $\Ten{\R^2}$.
A variant of this implication is
	\begin{equation}\label{eq: integral tilt kernel transpose}
	  \int_0^x \ptwotwo 1 {-z} 0 1 K \,dz = 0 \quad \textnormal{for some } x \in \R \setminus \{ 0 \}
	  \quad \Longleftrightarrow \quad K = 0 .
	\end{equation}

Let a convex polytope $P \in \cP_o^n$ be given.
In the next paragraph we recall some basic geometric quantities associated with $P$.
The first example is the support function $h_P$.
This is the function $h_P \colon \R^n \to \R$ defined by
\[
	h_P(x) = \max \{ x \cdot y: y \in P \}.
\]
The polar body $P^*$ of $P$ is given by
\[
	P^* = \{ x \in \R^n: x \cdot y \leq 1 \text{ for all } y \in P \} .
\]
Note that for each $\phi \in \GL$ we have
\begin{equation}\label{eq: polarity contravariant}
	(\phi P)^* = \phi^{-t} P^*,
\end{equation}
where $\phi^{-t}$ denotes the inverse of the transpose of $\phi$.
In particular,
\begin{equation}\label{eq: polar homogeneity}
	(\lambda P)^* = \lambda^{-1} P^*
\end{equation}
for all positive $\lambda$.
We define the polarity map $ ^* \colon \cP_o^n \to \cP_o^n$ 
as the function which assigns to each polytope its polar body.
It is well known that this map is a homeomorph involution.
The surface area measure $S(P, \cdot)$ is defined for each Borel set $\omega \subseteq S^{n-1}$ as
\[
	S(P,\omega) = \mathcal{H}^{n-1}\{ x \in \partial P: 
	\exists \text{ an outer unit normal } u_x \text{ at } x \text{ which belongs to } \omega \}.
\] 
Surface area measures have their centroid at the origin, i.e.
\begin{equation}\label{eq: surface centroid}
	\int_{S^{n-1}} u \ dS(P, u) = 0
\end{equation}
for all $P \in \cP_o^n$.
The $L_p$ surface area measure $S_p(P, \cdot)$ is given by
\[
	S_p(P, \omega) = \int_{\omega} h_P^{1-p}(u) \ dS(P, u). 
\]

Next, we will generalize the concept of $\SL$-covariance a little bit.
We write $\VL$ for the set of linear maps having determinant either $1$ or $-1$.
Let $\varepsilon \in \{0,1\}$ and $G \subseteq \VL$ be given.
A map $\mu \colon \cP^n_o \to \Sym{\R^n}$ is said to be $G$-$\varepsilon$-covariant 
or $\varepsilon$-covariant with respect to $G$ if
	\[
		\mu (\phi P) = (\det \phi)^{\varepsilon} \phi \cdot \mu (P)
	\]
for every $P \in \cP^n_o$ and each $\phi \in G$.
In order to simplify the notation in the sequel, we write $\ValCoEps{p}{n}$
for the vector space of measurable $\VL$-$\varepsilon$-covariant valuations.

Let $\mu \in \ValCo{p}{n}$.
Choose a $\theta \in \VL \setminus \SL$.
For all $P \in \cP^n_o$ define
\[
	\mu^0 (P) = \frac 1 2 \left( \mu (P) + \theta \cdot \mu (\theta^{-1} P) \right)
\]
and
\[
	\mu^1 (P) = \frac 1 2 \left( \mu (P) - \theta \cdot \mu (\theta^{-1} P) \right).
\]
The $\SL$-covariance of $\mu$ implies that these definitions do not depend on the choice of $\theta$.
Clearly, $\mu^0$ and $\mu^1$ are measurable valuations and $\mu = \mu^0 + \mu^1$.
Moreover, it is easy to see that $\mu^0 \in \ValCoZero{p}{n}$ and $\mu^1 \in \ValCoOne{p}{n}$.
Hence,
\begin{equation}\label{eq: Val direct sum}
	\ValCo{p}{n} = \ValCoZero{p}{n} \oplus \ValCoOne{p}{n}.
\end{equation}

The convex hull of a set $A \subseteq \R^n$ is written as $[A]$.
In the context of double pyramids, the following symbols will usually have a fixed meaning.
The letters $a, b, c, d$ will denote positive real numbers
with associated line segments $I := [-a e_1, b e_1]$ and $J := [-c e_n, d e_n]$, respectively.
The letters $x, y$ will denote elements of $\R^{n-1}$.
In particular, for $n = 2$ we have $J = [-c e_2, d e_2]$ and $x,y \in \R$.
The letter $B$ will denote an element of $\cP^{n-1}_o$.
For $n = 2$, we say that $a, b, c, d, x, y$ form a double pyramid if
\[
	\left[ I, -c \ptwo x 1, d \ptwo y 1 \right] \cap e_2^\perp = I
\]
and for $n \geq 3$, we say that $B, c, d, x, y$ form a double pyramid if
\[
	\left[ B, -c \ptwo x 1, d \ptwo y 1 \right] \cap e_n^\perp = B .
\]
If $x = y = 0$, then we call the double pyramid straight.
The set of double pyramids will be denoted by $\cR^n$
and the set of straight double pyramids by $\cQ^n$.
In \cite{Lud02advval}, Ludwig proved that if a real valued valuation $\mu \colon \cP^n_o \to \R$ 
vanishes on all $\SL[n]$-images of elements in $\cR^n$, then it vanishes on $\cP_o^n$.
A componentwise application of this fact yields the following result.

\begin{theorem}\label{th: determined by values on SL(n)(R^n)}
	Let $n \geq 2$.
	Suppose that $\mu \colon \cP^n_o \to \Ten{\R^n}$ is a valuation which vanishes on all $\SL[n]$-images of elements in $\cR^n$.
	Then $\mu$ vanishes everywhere.
\end{theorem}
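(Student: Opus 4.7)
The plan is to reduce the tensor-valued statement to the scalar version of Ludwig from \cite{Lud02advval}, which is quoted in the paragraph immediately preceding the theorem. Working with a fixed basis of $\Ten{\R^n}$ (for instance, the standard basis $\{e_{\alpha_1} \otimes \cdots \otimes e_{\alpha_p}\}$ induced by the canonical basis of $\R^n$), I would expand
\[
    \mu(P) = \sum_{\alpha} \mu_\alpha(P) \, e_{\alpha_1} \otimes \cdots \otimes e_{\alpha_p},
\]
where each coordinate function $\mu_\alpha \colon \cP^n_o \to \R$ is defined by reading off the $\alpha$-th entry.

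The next step is to observe that the valuation property
\[
    \mu(P \cup Q) + \mu(P \cap Q) = \mu(P) + \mu(Q)
\]
is an equation in the finite-dimensional vector space $\Ten{\R^n}$, so it holds if and only if it holds componentwise in the chosen basis. Consequently each $\mu_\alpha$ is itself a real-valued valuation on $\cP^n_o$. Similarly, the vanishing hypothesis $\mu(\phi R) = 0$ for all $R \in \cR^n$ and $\phi \in \SL[n]$ translates componentwise into $\mu_\alpha(\phi R) = 0$ for every multiindex $\alpha$.

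At this point Ludwig's scalar theorem from \cite{Lud02advval}, invoked verbatim in the paragraph above the statement, applies to each $\mu_\alpha$ and yields $\mu_\alpha \equiv 0$ on $\cP^n_o$. Summing over $\alpha$ gives $\mu \equiv 0$, which is the claim. No genuine obstacle arises here: the only content of the argument is that the valuation identity and the vanishing condition are both linear in the codomain, so they descend to coordinates, and once one is in the scalar category one quotes Ludwig's result directly. This is precisely why the excerpt introduces the theorem as following from "a componentwise application" of \cite{Lud02advval}.
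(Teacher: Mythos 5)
Your argument is exactly the one the paper intends: the paper states that the theorem follows by "a componentwise application" of Ludwig's scalar result from \cite{Lud02advval}, and your proposal simply spells out why the valuation property and the vanishing hypothesis descend to the coordinate functions $\mu_\alpha$. This is correct and matches the paper's approach.
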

Clearly, for $n = 2$, straight double pyramids can be split into two triangles having one side contained
in $e_2^{\bot}$. The set of such straight triangles 
	\[
		[I, -c e_2] \quad \text{ and } \quad [I, d e_2]	
	\]
is denoted by $\cT^2$.

We need an explicit description of $M_\rho^{i, p-i}$ on straight double pyramids.
We start with the following calculation.
Note that in the next two lemmas we use the common convention that
\begin{equation}\label{eq: convention for binomial coefficient}
	\binom i j = 0 \quad \text{if $j < 0$ or $j > i$} .
\end{equation}

\begin{lemma}\label{le: M^{j,p-j} help}
	For $b, c > 0$ and $i \in \{ 0, \ldots, p\}$, we have
	\begin{equation} \label{eq: int b c}
		(i+1) (bc)^{1-p+i} \int_0^1 {\ptwo {bt} {-c(1-t)}}^{\odot i} \odot {\ptwo b c}^{\odot p-i} \ dt = \\
		\sum_{l = 0}^p m_{i,l} b^{1+i-l} c^{1-p+i+l} e_1^{\odot p-l} \odot e_2^{\odot l} ,
	\end{equation}
	where
	\[
		m_{i,l} = \binom {p-i-1} l + (-1)^i \binom {p-i-1} {l-i-1}
	\]
	for $i \neq p$ and
	\[
		m_{p,l} = (-1)^l .
	\]
	In particular, for $i \neq p$, $m_{i,l} = 0$ if $p-i \leq l \leq i$.
\end{lemma}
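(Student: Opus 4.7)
The proof is a direct calculation, so the plan is to expand the integrand via the binomial theorem for symmetric tensor products, evaluate the scalar integral via Beta functions, and then simplify the resulting alternating binomial sum using a standard identity.

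First, I expand
\[
\Bigl(\ptwo{bt}{-c(1-t)}\Bigr)^{\odot i}
= \sum_{k=0}^{i} \binom{i}{k} (bt)^{i-k}(-c(1-t))^{k}\, e_1^{\odot(i-k)} \odot e_2^{\odot k}
\]
and similarly $\ptwo{b}{c}^{\odot(p-i)} = \sum_{j=0}^{p-i}\binom{p-i}{j} b^{p-i-j} c^{j}\, e_1^{\odot(p-i-j)} \odot e_2^{\odot j}$. Multiplying, collecting by $e_1^{\odot(p-l)} \odot e_2^{\odot l}$ with $l = k+j$, and pulling out the factors $b^{p-l}c^{l}$ leaves only the $t$-dependent piece $t^{i-k}(1-t)^{k}$ inside the integral.

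Next I integrate term-by-term: $\int_0^1 t^{i-k}(1-t)^{k}\,dt = \frac{(i-k)!\,k!}{(i+1)!}$, so after multiplying by $(i+1)$ the factor $\binom{i}{k}\frac{(i-k)!k!}{i!}$ collapses to $1$. Multiplying by $(bc)^{1-p+i}$ shifts the exponents to $b^{1+i-l}c^{1-p+i+l}$, matching the right-hand side, and reduces the claim to
\[
m_{i,l} \;=\; \sum_{k=\max(0,\,l-p+i)}^{\min(i,l)} (-1)^{k}\binom{p-i}{l-k}.
\]
The case $i=p$ is immediate since $\binom{0}{l-k}=\delta_{k,l}$, giving $m_{p,l}=(-1)^l$.

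For $i<p$, I plan to apply the standard identity $\sum_{j=0}^{m}(-1)^{j}\binom{n}{j}=(-1)^{m}\binom{n-1}{m}$ twice. Writing the full sum $\sum_{k=0}^{l}(-1)^{k}\binom{p-i}{l-k}$ (substituting $j=l-k$) gives $\binom{p-i-1}{l}$, which is the first term of $m_{i,l}$. If $l>i$, the constraint $k\le i$ forces me to subtract the tail $\sum_{k=i+1}^{l}(-1)^{k}\binom{p-i}{l-k}$; shifting the summation index by $i+1$ and applying the same identity yields $(-1)^{i+1}\binom{p-i-1}{l-i-1}$, so this tail contributes $+(-1)^{i}\binom{p-i-1}{l-i-1}$, yielding the stated formula. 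If $l\le i$ the tail is empty and $\binom{p-i-1}{l-i-1}=0$ by convention, again matching. Finally, the ``in particular'' statement is just the observation that $\binom{p-i-1}{l}=0$ for $l\ge p-i$ and $\binom{p-i-1}{l-i-1}=0$ for $l\le i$, so both terms vanish simultaneously on $p-i\le l\le i$. The only real bookkeeping hazard is keeping track of the summation bounds correctly; no step is conceptually difficult.
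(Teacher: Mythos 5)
Your proof is correct and follows essentially the same route as the paper: expand both symmetric powers by the binomial theorem, integrate the scalar coefficients via the Beta function (making the $(i+1)\binom{i}{k}$ factors collapse), and then evaluate the resulting alternating binomial sum using the prefix-sum identity $\sum_{j=0}^{m}(-1)^j\binom{n}{j}=(-1)^m\binom{n-1}{m}$ twice. The only cosmetic difference is that you split $\sum_{k=0}^{\min(i,l)}$ as (full prefix sum) minus (tail $k\ge i+1$), whereas the paper reindexes and writes $\sum_{j=l-i}^{l}=\sum_{j=0}^{l}-\sum_{j=0}^{l-i-1}$; both yield the identical formula for $m_{i,l}$.
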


\begin{proof}
	Define $L$ as the left hand side of \eqref{eq: int b c}.
	Writing the first tensor product in coordinates,
	using well known results for the Beta function,
	and writing the second tensor product in coordinates,
	we calculate
	\begin{align*}
		L
		&= (i+1) \sum_{j = 0}^i \binom i j (-1)^j b^{1-p+2i-j} c^{1-p+i+j} \int_0^1 t^{i-j} (1-t)^j \ dt \
		e_1^{\odot i-j} \odot e_2^{\odot j} \odot {\ptwo b c}^{\odot p-i} \\
		&= \sum_{j = 0}^i (-1)^j b^{1-p+2i-j} c^{1-p+i+j}
		e_1^{\odot i-j} \odot e_2^{\odot j} \odot {\ptwo b c}^{\odot p-i} \\
		&= \sum_{j = 0}^i \sum_{k = 0}^{p-i} \binom {p-i} k (-1)^j b^{1+i-j-k} c^{1-p+i+j+k}
		e_1^{\odot p-j-k} \odot e_2^{\odot j+k} .
	\end{align*}
	Summing first over $l = j+k$ and keeping convention \eqref{eq: convention for binomial coefficient}
	for the binomal coefficient in mind, this becomes
	\[
		L = \sum_{l = 0}^p \sum_{j = 0}^i \binom {p-i} {l-j} (-1)^j b^{1+i-l} c^{1-p+i+l}
		e_1^{\odot p-l} \odot e_2^{\odot l} .
	\]
	For $i = p$, we clearly have
	\[
		\sum_{j = 0}^p \binom 0 {l-j} (-1)^j = (-1)^l .
	\]
	Assume $i \neq p$.
	A well known formula for an alternating sum of binomal coefficients states
	\[
		\sum_{j = 0}^l \binom {p-i} j (-1)^j = (-1)^l \binom {p-i-1} l .
	\]
	Note that $p-i \geq 1$ and that we again use convention \eqref{eq: convention for binomial coefficient}
	for the binomal coefficient from above.
	Hence,
	\begin{align*}
		\sum_{j = 0}^i \binom {p-i} {l-j} (-1)^j
		&= (-1)^l \sum_{j = l-i}^l \binom {p-i} j (-1)^j \\
		&= \binom {p-i-1} l + (-1)^i \binom {p-i-1} {l-i-1} .
	\end{align*}
\end{proof}

With the aid of this result, we can now calculate $M_\rho^{i, p-i}$ on
straight double pyramids.

\begin{lemma}\label{le: M on crosspolytopes}
	For $i \in \{ 0, \ldots, p\}$, we have
	\begin{multline*}
		M_\rho^{i, p-i}[I,J]
		= \frac 1 {i+1} \sum_{l = 0}^p m_{i,l} \left[
		(-1)^{i+l} a^{1+i-l} c^{1-p+i+l}
		+ b^{1+i-l} c^{1-p+i+l}
		\right. \\ \left.
		+ (-1)^p a^{1+i-l} d^{1-p+i+l}
		+ (-1)^{p+i+l} b^{1+i-l} d^{1-p+i+l}
		\right] e_1^{\odot p-l} \odot e_2^{\odot l}
	\end{multline*}
	for all $a, b, c, d > 0$, where $m_{i,l}$ is defined as in Lemma \ref{le: M^{j,p-j} help}.
	In particular, for $p \geq 1$, $M_\rho^{p-1, 1}[I,J] = 0$.
\end{lemma}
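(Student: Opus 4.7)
The boundary $\partial [I,J]$ is the disjoint union (up to a set of $\mathcal{H}^1$-measure zero) of the four edges joining consecutive vertices in the cycle $be_1, de_2, -ae_1, -ce_2$, so $M_\rho^{i,p-i}[I,J]$ splits as a sum of four edge integrals. My plan is to evaluate each of them by reducing it to Lemma \ref{le: M^{j,p-j} help} and then to add the contributions.

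For the edge joining $\alpha e_1$ and $\beta e_2$ with $\alpha \in \{-a, b\}$ and $\beta \in \{-c, d\}$, I would parametrize linearly as $x(t) = \ptwo{t\alpha}{(1-t)\beta}$, $t \in [0,1]$. Since the origin lies in the interior of $[I,J]$, one has $h_{[I,J]}(u_x) = |\alpha\beta|/\sqrt{\alpha^2 + \beta^2}$, $|x'(t)| = \sqrt{\alpha^2 + \beta^2}$, and the outer unit normal (hence also $\rho u_x$) differs from $\ptwo{|\alpha|}{|\beta|}/\sqrt{\alpha^2+\beta^2}$ only by signs in its two components depending on $\sgn(\alpha), \sgn(\beta)$. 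A direct exponent count shows that the three factors of $\sqrt{\alpha^2+\beta^2}$ coming from $(\rho u_x)^{\odot p-i}$, $h_{[I,J]}^{1-(p-i)}(u_x)$ and the arc-length element cancel exactly. For the edge $-ce_2 \to be_1$ (i.e.\ $\alpha = b$, $\beta = -c$) the residual integral is precisely $\frac{1}{i+1}$ times the left-hand side of \eqref{eq: int b c}, and Lemma \ref{le: M^{j,p-j} help} gives the contribution $\frac{1}{i+1}\sum_l m_{i,l} b^{1+i-l} c^{1-p+i+l} e_1^{\odot p-l} \odot e_2^{\odot l}$, matching the second summand in the statement.

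For the remaining three edges the residual integrals have the same structure but with the bases of the two tensor products differing from those in \eqref{eq: int b c} by some sign changes. The main (if modest) obstacle of the proof is to carefully extract these signs using $(-v)^{\odot k} = (-1)^k v^{\odot k}$ and to combine them with the sign $(-1)^{1-p+i+l}$ arising from replacing $c$ by $-|\beta|$ or $b$ by $-|\alpha|$ in the exponents $c^{1-p+i+l}$ and $b^{1+i-l}$ of Lemma \ref{le: M^{j,p-j} help}. Executing this bookkeeping yields the prefactors $(-1)^p$ for the edge $de_2 \to -ae_1$, $(-1)^{i+l}$ for $-ae_1 \to -ce_2$, and $(-1)^{p+i+l}$ for $be_1 \to de_2$; summing the four contributions then produces the claimed formula.

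For the ``in particular'' part, set $i = p-1$. By convention \eqref{eq: convention for binomial coefficient}, $m_{p-1, l} = \binom{0}{l} + (-1)^{p-1}\binom{0}{l-p}$ is nonzero only at $l = 0$ and $l = p$, with values $1$ and $(-1)^{p-1}$ respectively. Substituting $i = p-1$ into the formula and using $(-1)^{p-1} + (-1)^p = 0$, the $a^p$ and $b^p$ terms cancel separately in the bracket at $l = 0$, and the $c^p$ and $d^p$ terms cancel separately in the bracket at $l = p$. Hence $M_\rho^{p-1,1}[I,J] = 0$.
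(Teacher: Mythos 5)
Your proposal is correct and follows essentially the same route as the paper: split the boundary integral into the four edges, reduce each to Lemma \ref{le: M^{j,p-j} help} via a linear parametrization (with the $\sqrt{\alpha^2+\beta^2}$ factors cancelling and the remaining signs bookkept explicitly), sum, and verify the $i=p-1$ vanishing by observing the brackets at $l=0$ and $l=p$ vanish. The paper simply states "similar observations are true for the other three edges" and phrases the final cancellation in terms of $l=i+1$ and $l=p-i-1$, which for $i=p-1$ are exactly your $l=p$ and $l=0$.
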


\begin{proof}
	Clearly, the double pyramid $[I,J]$ has four edges.
	Hence, the defining integral of $M_\rho^{i, p-i}$ can be split into four integrals along these line segments.
	Let us consider the edge $[b e_1, -c e_2]$.
	Using the parametrization 
	\[
		\gamma(t) = t b e_1 - (1-t)c e_2, \quad t \in [0, 1],
	\]
	an elementary calculation shows that 
	\[
		\int_{[b e_1, -c e_2]} x^{\odot i} \odot (\rho u_x)^{\odot p-i} \ d\mathcal H^1(x)
	\]
	equals, up to a factor of $i+1$, the integral considered in Lemma \ref{le: M^{j,p-j} help}.
	Similar observations are true for the other three edges.
	Summing the expressions from Lemma \ref{le: M^{j,p-j} help} for all edges yields the desired result.
	Finally, for $i \neq p$, note that the terms for $l = i+1$ as well as $l = p-i-1$ cancel out.
\end{proof}

Let $B = [I,J]$. Note that $B^* = [- a^{-1}, b^{-1}] \times [- c^{-1}, d^{-1}]$. 
Thus,
\[
	\left[ M^{p,0}(B^*) \right]_p = 
	\frac{1}{p+1} \left( a^{-1} + b^{-1} \right)\left( d^{-p-1} + (-1)^p c^{-p-1} \right).
\]
In combination with the last lemma, we see that
\begin{equation}\label{eq: M(B) neq 0}
	\left[ M_\rho^{i,p-i} (B) \right]_0 , \quad
	i \in \{0,\ldots,p\}\backslash\{p-1\} , \quad \text{and} \quad
	\left[\rho \cdot M^{p,0}(B^*)\right]_0
\end{equation}
do not vanish for all double pyramids $B$.
It follows immediately from their definition, that the $M_\rho^{i,p-i}$ are
homogeneous, i.e.
\begin{equation}\label{eq: M homogeneous 2 dim}
	M_\rho^{i,p-i}( \lambda P )= \lambda^{2-p+2i} M_\rho^{i,p-i}(P)
\end{equation}
for all $P \in \cP_o^2$ and $\lambda > 0$.
Combining the last two facts and \eqref{eq: polar homogeneity}, it is easy to see that the family 
\[
	M_\rho^{i,p-i}, i \in \{0,\ldots,p\}\backslash\{p-1\}, \quad \rho \cdot M^{p,0} \circ \, ^* 
\]
is linearly independent.

We also state a few simple facts about $M^{i,p-i}$ in dimension $n \geq 2$ for later reference.
Clearly, these maps are homogeneous,
\begin{equation}\label{eq: M homogeneous n dim}
	M^{i,p-i}( \lambda P )= \lambda^{n-p+2i} M^{i,p-i}(P)
\end{equation}
for all $P \in \cP_o^n$ and $\lambda > 0$.
The $e_1^{\odot p}$-coordinates of
\begin{equation}\label{eq: M(B) neq 0, n dim}
	M^{p, 0}(B^*) \quad \text{and} \quad M^{0, p}(B)
\end{equation}
do not vanish for all crosspolytopes $B$ except for $M^{0,1}(B)$.
To see this, one can look at crosspolytopes that are sufficiently asymmetric with respect to $e_1^\perp$.
Combining the last two facts and \eqref{eq: polar homogeneity}, it is easy to see that
\[
	M^{p, 0} \circ \, ^* \quad \text{and} \quad M^{0, p}
\]
are linearly independent.

Next, we show that $M_\rho^{i,p-i}$, for $n = 2$, and $M^{i,p-i}$, for $n \geq 2$, are valuations.
The easiest way to see this is to write them as integrals over the support measure $\Lambda_{n-1}$ (see \cite{Sch13}*{Chapter 4}).
For the latter, we have
\[
	M^{i,p-i}(P) = 2 \int_{\R \times S^{n-1}} x^{\odot i} \odot u^{\odot p-i} \, (x \cdot u)^{1-p+i} \, d\Lambda_{n-1}(P, (x,u))
\]
for all $P \in \cP^n_o$.
Now, the valuation property, the covariance properties, as well as the continuity of these maps
follow from similar properties of the support measure $\Lambda_{n-1}$.
In particular,
\begin{equation}\label{eq: M_rho signum covariant}
	M_{\rho}^{i,p-i}( \phi P )= (\det \phi)^{p-i} \ \phi \cdot M_{\rho}^{i,p-i}(P) ,
\end{equation}
for all $P \in \cP_o^2$ and $\phi \in \VL[2]$.
Furthermore,
\begin{equation}\label{eq: M covariant, n dim}
	M^{p,0}( \phi P )= \phi \cdot M^{p,0}(P) \quad \text{and} \quad
	M^{0,p}( (\phi P)^* ) = \phi \cdot M^{0,p}( P^* )
\end{equation}
for all $P \in \cP_o^n$ and $\phi \in \VL[n]$.

	\section{Proof of the Main Results}
\subsection{The $1$-dimensional case}
We aim at a description of $\ValCo{p}{1}$.
Note that $\left( \R^1 \right)^{\otimes p}$ is always isomorphic to $\R$. 
Moreover, an $\VL[1]$-$\varepsilon$-covariant map is either even or odd.
So it suffices to classify even and odd valuations $\mu \colon \cP^1_o \to \R$, respectively.
Such classifications were already established in \cites{habpar13,HabParMoments} and are stated below.
Let us begin with the even case.

\begin{theorem}\label{th: dim 1 even}
	Suppose that $\mu \colon \cP^1_o \to \R$ is a measurable valuation.
	Then $\mu$ is even if and only if there exists a measurable function $F \colon (0, \infty) \to \R$ such that
	\[
		\mu[-a, b] = F(a) + F(b)
	\]
	for all $a, b > 0$.
	Moreover, $F(a) = \frac 1 2 \mu[-a,a]$.
\end{theorem}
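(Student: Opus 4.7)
The plan is to split into the easy and hard directions and extract the hard direction from the valuation identity applied to a single symmetric dissection of $[-a,a]$. For the easy direction, if $\mu[-a, b] = F(a) + F(b)$ then the right-hand side is symmetric in $a$ and $b$, so $\mu[-b, a] = \mu[-a, b]$, which is exactly the evenness of $\mu$; measurability of $\mu$ is inherited from that of $F$.

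For the hard direction I would set $F(a) := \frac{1}{2} \mu[-a, a]$. Measurability of $F$ is automatic, since $a \mapsto [-a, a]$ is continuous in the Hausdorff metric and $\mu$ is Borel measurable. The crux is to observe that for $0 < x \leq a$ the intervals $P = [-a, x]$ and $Q = [-x, a]$ both lie in $\cP_o^1$ and satisfy $P \cup Q = [-a, a]$ and $P \cap Q = [-x, x]$; the valuation identity therefore reads
\[
	\mu[-a, a] + \mu[-x, x] = \mu[-a, x] + \mu[-x, a] .
\]
The whole point of this choice of $P$ and $Q$ is that they are mirror images of each other, so evenness yields $\mu[-x, a] = \mu[-a, x]$ and collapses the identity to $2 F(a) + 2 F(x) = 2 \mu[-a, x]$. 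Hence $\mu[-a, x] = F(a) + F(x)$ for all $0 < x \leq a$.

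To obtain the identity for arbitrary $a, b > 0$ I would invoke evenness one last time: if $a \leq b$, then $\mu[-a, b] = \mu[-b, a]$, which by the case just established (with the roles of the two parameters interchanged) equals $F(b) + F(a)$. The main thing to keep track of is that both dissection pieces must still contain the origin in their interiors, which is why the auxiliary parameter is restricted to the range $0 < x \leq a$; spotting the symmetric decomposition $P = [-a, x]$, $Q = [-x, a]$ is really the only creative step, and once it is found the theorem is essentially a one-line consequence of the valuation property.
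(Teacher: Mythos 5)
Your proof is correct. The paper itself states Theorem~\ref{th: dim 1 even} without proof, referring to prior work, but the argument you give is the standard one: the key symmetric dissection $[-a,x] \cup [-x,a] = [-a,a]$ with intersection $[-x,x]$, together with evenness to identify $\mu[-x,a]$ with $\mu[-a,x]$, is exactly what makes the claim a one-line consequence of the valuation property, and you have correctly verified that all four sets lie in $\cP_o^1$ and that measurability of $F(a)=\tfrac12\mu[-a,a]$ follows from measurability of $\mu$ and continuity of $a\mapsto[-a,a]$.
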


For homogeneous valuations even more can be said. 
In fact, the function $F$ from the above theorem can be described explicitely.

\begin{theorem}\label{th: dim 1 even homogeneous}
  Suppose that $\mu \colon \cP^1_o \to \R$ is a measurable valuation.
	Then $\mu$ is even and homogeneous of degree $r \in \R$ if and only if there exists a constant $c \in \R$ such that
	\[
		\mu[-a, b] = c\,( a^r + b^r )
	\]
	for all $a, b > 0$.
\end{theorem}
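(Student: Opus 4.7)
The plan is to reduce everything to Theorem \ref{th: dim 1 even} and then exploit the homogeneity to pin down the function $F$ explicitly.

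First I would invoke Theorem \ref{th: dim 1 even} on $\mu$: since $\mu$ is an even measurable valuation, there is a measurable $F \colon (0,\infty) \to \R$ with $\mu[-a,b] = F(a) + F(b)$ and $F(a) = \tfrac{1}{2}\mu[-a,a]$. Now I would translate the assumption of $r$-homogeneity into a functional equation for $F$. Since $\lambda [-a,b] = [-\lambda a, \lambda b]$ for $\lambda > 0$, homogeneity says
\[
	F(\lambda a) + F(\lambda b) = \lambda^r \bigl( F(a) + F(b) \bigr)
\]
for all $a,b,\lambda > 0$.

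Next I would specialize this equation in two steps. Setting $a = b$ gives $2F(\lambda a) = 2\lambda^r F(a)$, so $F$ itself is $r$-homogeneous: $F(\lambda a) = \lambda^r F(a)$ for all $\lambda, a > 0$. Setting then $a = 1$ yields $F(\lambda) = c\lambda^r$ with $c := F(1)$. Substituting back produces exactly $\mu[-a,b] = c(a^r + b^r)$, the required formula.

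For the converse direction I would simply observe that $F(a) := ca^r$ is measurable, so Theorem \ref{th: dim 1 even} (used in the opposite direction) implies that $\mu[-a,b] := F(a) + F(b) = c(a^r + b^r)$ is a measurable even valuation; homogeneity of degree $r$ is immediate from the formula.

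There is essentially no obstacle here: the whole argument is a two-line specialization of the Cauchy-type decomposition already supplied by Theorem \ref{th: dim 1 even}, together with the trivial observation that a homogeneous function on $(0,\infty)$ of a single variable is determined by its value at $1$. Measurability is preserved automatically since a power function is continuous, so no appeal to the measurable solution of Cauchy's equation is even needed in this step.
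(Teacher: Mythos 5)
Your proposal is correct and follows exactly the intended route: apply Theorem~\ref{th: dim 1 even} to get $\mu[-a,b]=F(a)+F(b)$ with $F(a)=\tfrac12\mu[-a,a]$, observe that homogeneity of $\mu$ forces $F(\lambda a)=\lambda^r F(a)$ (equivalently, read this off directly from the ``moreover'' formula for $F$), and conclude $F(\lambda)=F(1)\lambda^r$. The paper states this theorem without proof as an immediate consequence of Theorem~\ref{th: dim 1 even}, and your two-step specialization is precisely the argument that makes it immediate.
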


Next, we state the corresponding classifications for odd valuations.

\begin{theorem}\label{th: dim 1 odd}
	Suppose that $\mu \colon \cP^1_o \to \R$ is a measurable valuation.
	Then $\mu$ is odd if and only if there exists a measurable function $F \colon (0, \infty) \to \R$ such that
	\[
		\mu[-a, b] = F(b) - F(a)
	\]
	for all $a, b > 0$.
	Moreover, $F(a) = \mu[-1,a] + c$ for some constant $c \in \R$.
\end{theorem}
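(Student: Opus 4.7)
The ``if'' direction is a direct check I would dispatch first: if $\mu[-a,b] = F(b) - F(a)$, then $\mu$ is odd because $\mu(-[-a,b]) = \mu[-b,a] = F(a) - F(b) = -\mu[-a,b]$; the valuation identity holds because for $P_i = [-a_i, b_i] \in \cP^1_o$, the union $P_1 \cup P_2 = [-\max a_i, \max b_i]$ and intersection $P_1 \cap P_2 = [-\min a_i, \min b_i]$ are again polytopes in $\cP^1_o$ (since $0$ lies in both intervals), so the valuation relation reduces to $F(b_1) + F(b_2) = F(\max b_i) + F(\min b_i)$ together with its counterpart for the $a_i$, both obvious.

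For the ``only if'' direction, I would write $f(a,b) := \mu[-a,b]$ and apply inclusion--exclusion to $P = [-a_1, b_1]$ and $Q = [-a_2, b_2]$ with $a_1 \le a_2$ and $b_1 \ge b_2$, for which $P \cup Q = [-a_2, b_1]$ and $P \cap Q = [-a_1, b_2]$. This yields
\[
	f(a_1, b_1) - f(a_1, b_2) = f(a_2, b_1) - f(a_2, b_2) .
\]
The identity is symmetric under swapping the two pairs, so it holds for all $a_1, a_2, b_1, b_2 > 0$. Fixing an arbitrary $b_0 > 0$, it says that $b \mapsto f(a,b) - f(a, b_0)$ is independent of $a$; hence $f$ splits additively as $f(a,b) = G(b) + H(a)$, where I can take $G(b) = \mu[-1, b] - \mu[-1, b_0]$ and $H(a) = \mu[-a, b_0]$.

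To finish I would invoke oddness: the identity $-[-a,b] = [-b,a]$ gives $f(b,a) = -f(a,b)$, that is, $G(a) + H(a) + G(b) + H(b) = 0$ for all $a,b > 0$, so $G + H$ is a constant function; specializing $a = b$ and using $f(a,a) = 0$ forces $G + H \equiv 0$. Setting $F := G$ yields $\mu[-a,b] = F(b) - F(a)$, and the formula $F(a) = \mu[-1,a] + c$ with $c = -\mu[-1, b_0]$ is immediate. Measurability of $F$ follows from measurability of $\mu$ combined with the continuity of $a \mapsto [-1,a]$ in the Hausdorff metric. The only step requiring any insight is choosing the pair $(P,Q)$ with opposite orderings of endpoints so that inclusion--exclusion directly produces a separation-of-variables identity; everything else is algebra, and in contrast to most of the paper, no Cauchy functional equation is required.
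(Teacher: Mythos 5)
The paper itself does not prove Theorem \ref{th: dim 1 odd}; it cites the earlier works \cite{habpar13} and \cite{HabParMoments} and merely states the result, so there is no internal proof to compare against. Your argument is the natural one and is essentially correct: inclusion--exclusion on two intervals with opposite orderings of endpoints gives the rectangle law, which separates variables, and oddness together with $\mu[-a,a]=0$ pins down the decomposition. The measurability claim and the formula $F(a)=\mu[-1,a]+c$ are both fine.

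There is one small imprecision worth fixing. Inclusion--exclusion directly yields the rectangle identity
\[
	f(a_1,b_1)-f(a_1,b_2)-f(a_2,b_1)+f(a_2,b_2)=0
\]
only in the ``anti-diagonal'' regime $a_1\le a_2$, $b_1\ge b_2$ (the other ordering $a_1\ge a_2$, $b_1\le b_2$ gives $P\cup Q$ and $P\cap Q$ equal to $P$ and $Q$ themselves, hence no information). Your remark that ``the identity is symmetric under swapping the two pairs'' only transports validity from $a_1\le a_2,\, b_1\ge b_2$ to $a_1\ge a_2,\, b_1\le b_2$, i.e.\ it still only covers $(a_1-a_2)(b_1-b_2)\le 0$. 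The remaining case $(a_1-a_2)(b_1-b_2)>0$ does follow, but not from the pair-swap: it follows because the displayed expression changes sign under swapping $b_1$ and $b_2$ alone (or $a_1$ and $a_2$ alone), so vanishing on one sign regime forces vanishing on the other. With that one-line correction the separation $f(a,b)=G(b)+H(a)$ is fully justified and the rest of your argument goes through unchanged.
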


As before, an immediate consequence of Theorem \ref{th: dim 1 odd} is a classification of odd homogeneous valuations.

\begin{theorem}\label{th: dim 1 odd homogeneous}
	Suppose that $\mu \colon \cP^1_o \to \R$ is a measurable valuation.
	Then $\mu$ is odd and homogeneous of degree $r \in \R \setminus \{ 0 \}$ if and only if there exists a constant $c \in \R$ such that
	\[
		\mu[-a, b] = c\,( b^r - a^r )
	\]
	for all $a, b > 0$.
	
	The valuation $\mu$ is odd and homogeneous of degree $0$ if and only if there exists a constant $c \in \R$ such that
	\[
		\mu[-a, b] = c\,[ \ln(b) - \ln(a) ]
	\]
	for all $a, b > 0$.
\end{theorem}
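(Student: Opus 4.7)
The plan is to derive this result as a direct corollary of Theorem~\ref{th: dim 1 odd}, using homogeneity to pin down the free function~$F$.

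By Theorem~\ref{th: dim 1 odd}, an odd measurable valuation $\mu$ has the form $\mu[-a,b] = F(b) - F(a)$ for some measurable $F \colon (0,\infty) \to \R$. Replacing $F$ by $G(a) = F(a) - F(1)$ does not change the differences, so I may assume $G(1) = 0$ and write $\mu[-a,b] = G(b) - G(a)$. Homogeneity of degree $r$ translates into
\[
    G(\lambda b) - G(\lambda a) = \lambda^r \bigl( G(b) - G(a) \bigr)
\]
for all $\lambda, a, b > 0$. Setting $a = 1$ yields $G(\lambda b) - G(\lambda) = \lambda^r G(b)$, and interchanging the roles of $\lambda$ and $b$ gives the symmetry relation $G(b) + b^r G(\lambda) = G(\lambda) + \lambda^r G(b)$, i.e.
\[
    G(b)\bigl(1 - \lambda^r\bigr) = G(\lambda)\bigl(1 - b^r\bigr).
\]

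Now I split into two cases. For $r \neq 0$, fix $\lambda_0 > 0$ with $\lambda_0^r \neq 1$; then the above relation forces $G(b) = c\,(b^r - 1)$ with $c = G(\lambda_0)/(\lambda_0^r - 1)$, so $\mu[-a,b] = c(b^r - a^r)$. For $r = 0$, the relation $G(\lambda b) = G(\lambda) + G(b)$ reduces via the substitution $H(x) = G(e^x)$ to Cauchy's functional equation $H(x+y) = H(x) + H(y)$ for a measurable $H \colon \R \to \R$. By the one-dimensional Cauchy result recalled at the beginning of Section~\ref{prelim}, $H(x) = cx$ for some $c \in \R$, hence $G(b) = c \ln b$ and $\mu[-a,b] = c(\ln b - \ln a)$.

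The converse direction is a direct verification: in either case, $(a,b) \mapsto F(b) - F(a)$ with $F(a) = c a^r$ or $F(a) = c \ln a$ is clearly a measurable odd valuation of the claimed homogeneity degree. I do not expect any substantial obstacle here; the only subtlety is invoking measurability of $H$ in the $r = 0$ step so that the solutions of Cauchy's equation are exactly the linear ones.
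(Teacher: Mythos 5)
Your proof is correct and follows the route the paper itself indicates: the paper states immediately before the theorem that it is ``an immediate consequence of Theorem~\ref{th: dim 1 odd},'' and your argument is exactly that derivation, pinning down the free function $F$ from Theorem~\ref{th: dim 1 odd} via the homogeneity relation and, in the $r=0$ case, the measurable solution of Cauchy's equation. Nothing further is needed.
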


We remark that every valuation $\mu \colon \cP^1_o \to \R$ can be written as the sum of an even and an odd valuation. 
Therefore, the above theorems yield a classification of all measurable valuations $\mu \colon \cP^1_o \to \R$.

\subsection{The $2$-dimensional case}

\subsubsection{Some tensor equations}
We begin by solving a sheared version of Cauchy's functional equation for tensors.

\begin{lemma}\label{le: tensor Cauchy}
	Suppose that $G \colon \R \to \Ten{\R^2}$ is a measurable function.
	Then $G$ satisfies
	\begin{equation}\label{eq: sheared Cauchy}
		G(x+y) = G(x) + \tilt x G(y)
	\end{equation}
	for all $x, y \in \R$ if and only if there exists a tensor $K \in \Ten{\R^2}$ such that
	\begin{equation}\label{eq: sheared Cauchy result}
		G(x) = \int_0^x \tilt z K \ dz .
	\end{equation}
	Moreover, if $G$ has symmetric images, then $K \in \Sym{\R^2}$.
	Furthermore, the same results hold if $G$ is only defined on $(0, \infty)$.
\end{lemma}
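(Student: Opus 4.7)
The sufficiency direction is a direct calculation: splitting $\int_0^{x+y}\tilt z K\,dz$ as $\int_0^x\tilt z K\,dz + \int_x^{x+y}\tilt z K\,dz$, substituting $z = x+w$ in the second integral, and invoking the semigroup identity $\tilt{x+w} = \tilt x\tilt w$ together with \eqref{eq: integral compatible SL(n)} give $\int_x^{x+y}\tilt z K\,dz = \tilt x \int_0^y \tilt w K\,dw = \tilt x G(y)$, which is \eqref{eq: sheared Cauchy}.

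For the necessity direction, my plan is to reduce the tensorial equation to a family of scalar Cauchy-type equations, one per multi-index $\alpha \in \{1,2\}^p$, and solve them by downward induction on $k := |\{i : \alpha_i = 2\}|$. Setting $x = y = 0$ first gives $G(0) = 0$. Expanding \eqref{eq: sheared Cauchy} in coordinates via \eqref{eq: tilt expanded} and isolating the $\beta = \alpha$ term yields
\begin{equation*}
	G_\alpha(x+y) = G_\alpha(x) + G_\alpha(y) + \sum_{\beta > \alpha} x^{|\{i : \beta_i = 2\}| - k}\, G_\beta(y).
\end{equation*}
In the base case $\alpha = (2,\ldots,2)$ the sum is empty, so by the scalar Cauchy result from Section \ref{prelim} measurability forces $G_\alpha(x) = K_\alpha x$ for a unique $K_\alpha \in \R$. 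For the inductive step, suppose $K_\beta$ has been defined for every $\beta$ with $|\{i : \beta_i = 2\}| > k$ so that each $G_\beta$ coincides with the $\beta$-coordinate of $\int_0^x \tilt z K^{(>k)}\,dz$, where $K^{(>k)} \in \Ten{\R^2}$ carries the already-defined components and is zero elsewhere. Let $P_\alpha(x) := [\int_0^x \tilt z K^{(>k)}\,dz]_\alpha$. Applying the sufficiency direction to $K^{(>k)}$, taking $\alpha$-coordinates, and invoking the induction hypothesis (which makes $P_\beta = G_\beta$ for every $\beta > \alpha$), one sees that $P_\alpha$ satisfies exactly the same identity as $G_\alpha$; hence $G_\alpha - P_\alpha$ solves ordinary Cauchy, and measurability gives $(G_\alpha - P_\alpha)(x) = K_\alpha x$ for a unique $K_\alpha \in \R$. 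Running the induction down to $k = 0$ assembles all components of a tensor $K$ with $G(x) = \int_0^x \tilt z K\,dz$.

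For the symmetry addendum, if $G$ is $\Sym{\R^2}$-valued then so is $\int_0^x \tilt z K\,dz$ for every $x$; as $z \mapsto \tilt z K$ is continuous, \eqref{eq: int symmetric} forces $\tilt z K \in \Sym{\R^2}$ for all $z$, and specialising to $z = 0$ yields $K \in \Sym{\R^2}$. The case $G \colon (0,\infty) \to \Ten{\R^2}$ is handled by the identical induction, using the version of the scalar Cauchy equation on $(0,\infty)$ recalled in Section \ref{prelim}. The main obstacle is purely the bookkeeping in the inductive step, i.e.\ verifying that $P_\alpha$ satisfies the same functional identity as $G_\alpha$; this becomes automatic once one realises that $P_\alpha$ is by construction the $\alpha$-component of an integral of the very form already covered by the sufficiency direction.
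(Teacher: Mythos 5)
Your proof is correct, and it follows the same essential strategy as the paper: reduce the tensorial sheared Cauchy equation to scalar Cauchy equations, one per multi-index, by a downward induction that exploits the triangular structure of \eqref{eq: tilt expanded}. The main difference lies in how the tensor $K$ is produced. The paper first invokes \eqref{eq: integral tilt kernel} to obtain $K$ in one stroke as the unique preimage of $G(1)$ under the linear isomorphism $K \mapsto \int_0^1 \tilt z K \, dz$, and then shows that the remainder $H := G - \int_0^{\,\cdot} \tilt z K \, dz$ vanishes coordinate-by-coordinate via downward induction on the multi-index poset, using $H(1) = 0$ as the normalisation. You instead assemble $K$ incrementally, level by level in the number of twos, reading off each component $K_\alpha$ as the linear slope of the Cauchy solution $G_\alpha - P_\alpha$. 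Both variants are sound; yours avoids \eqref{eq: integral tilt kernel} but needs the extra bookkeeping step (which you only gesture at) that adjoining the new level-$k$ components does not disturb the already-matched higher-level coordinates---this works because only $\gamma \geq \beta$ ever enter the $\beta$-component of $\int_0^x \tilt z K \, dz$, so newly added lower-level entries are invisible there. The paper's version is marginally tighter because it manipulates a single remainder function throughout rather than a growing family of partial tensors.
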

\begin{proof}
	An elementary calculation combined with \eqref{eq: integral compatible SL(n)} proves 
	\begin{align}\label{eq: sheared Cauchy check}
		\int_0^{x+y} \tilt z K \ dz
		&= \int_0^x \tilt z K \ dz + \int_x^{x+y} \tilt z K \ dz \nonumber \\ 
		&= \int_0^x \tilt z K \ dz + \int_0^y \tilt {z+x} K \ dz \nonumber \\
		&= \int_0^x \tilt z K \ dz + \tilt x \int_0^y \tilt z K \ dz 
	\end{align}
	for each $K \in \Ten{\R^2}$.
	So each $G$ defined by \eqref{eq: sheared Cauchy result} satisfies \eqref{eq: sheared Cauchy}.
	
	Now, let $G$ be a solution of \eqref{eq: sheared Cauchy}. 
	By \eqref{eq: integral tilt kernel} we can find a $K \in \Ten{\R^2}$ such that
	\[
		H(x) := G(x) - \int_0^x \tilt z K \ dz
	\]
	satisfies $H(1) = 0$.
	It remains to prove that $H(x) = 0$ for all $x \in \R$.
	We will show by induction that $H_\alpha = 0$ for all $\alpha \in \{ 1, 2 \}^p$.
	Assume that $H_\beta = 0$ for all $\beta > \alpha$, which is trivially true for $\alpha = (2, \ldots, 2)$.
	Equation \eqref{eq: sheared Cauchy} is clearly a linear functional equation.
	Hence, by the definition of $H$ and \eqref{eq: sheared Cauchy check}, $H$ satisfies \eqref{eq: sheared Cauchy}.
	So \eqref{eq: tilt expanded} and the induction assumption yield
	\[
		H_\alpha (x+y) = H_\alpha (x) + H_\alpha (y).
	\]
	Thus, $H_\alpha$ satisfies Cauchy's functional equation.
	Since $H_\alpha$ is measurable and $H_\alpha (1) = 0$, it follows that $H_\alpha = 0$.
	
	We still have to prove the assertion about symmetric tensors.
	By assumption,
	\[
		G(x) = \int_0^x \tilt z K \ dz \in \Sym{\R^2} 
	\]
	for all $x \in \R$.
	So from \eqref{eq: int symmetric} we infer that
	\[
		\tilt z K \in \Sym{\R^2}
	\]
	for all $z \in \R$.
	Since the above matrix is invertible, also $K \in \Sym{\R^2}$.
	The proof for $G \colon (0, \infty) \to \Ten{\R^2}$ is exactly the same.
\end{proof}

The next result is a slighlty different version of Lemma \ref{le: tensor Cauchy}.

\begin{lemma}\label{le: tensor Cauchy, contravariant}
	Suppose that $G \colon \R \to \Ten{\R^2}$ is a measurable function.
	Then $G$ satisfies
	\begin{equation}\label{eq: sheared Cauchy, contravariant}
		G(x+y) = G(x) + \ptwotwo 1 {-x} 0 1 \cdot G(y)
	\end{equation}
	for all $x, y \in \R$ if and only if there exists a tensor $K \in \Ten{\R^2}$ such that
	\[
		G(x) = \int_0^x \ptwotwo 1 {-z} 0 1 \cdot K \ dz .
	\]
	Moreover, if $G$ has symmetric images, then $K \in \Sym{\R^2}$.
	Furthermore, the same results hold if $G$ is only defined on $(0, \infty)$.
\end{lemma}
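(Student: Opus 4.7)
The strategy is to run the proof of Lemma \ref{le: tensor Cauchy} line by line, with \eqref{eq: integral tilt kernel} replaced by \eqref{eq: integral tilt kernel transpose} and with the partial order on multi-indices reversed.

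For the "if" direction I would verify directly that $G(x) = \int_0^x \ptwotwo 1 {-z} 0 1 \cdot K \, dz$ solves \eqref{eq: sheared Cauchy, contravariant}. Splitting $\int_0^{x+y}$ at $x$, changing variables $z \mapsto z + x$ in the second integral, using the identity $\ptwotwo 1 {-(z+x)} 0 1 = \ptwotwo 1 {-x} 0 1 \cdot \ptwotwo 1 {-z} 0 1$, and pulling the constant matrix out via \eqref{eq: integral compatible SL(n)} reproduces \eqref{eq: sheared Cauchy, contravariant} exactly as in \eqref{eq: sheared Cauchy check}.

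For the converse, I would use \eqref{eq: integral tilt kernel transpose} to pick $K \in \Ten{\R^2}$ so that $H(x) := G(x) - \int_0^x \ptwotwo 1 {-z} 0 1 \cdot K \, dz$ vanishes at $x = 1$; since \eqref{eq: sheared Cauchy, contravariant} is linear, $H$ is again a measurable solution and it suffices to show $H \equiv 0$. The key structural input is the analog of \eqref{eq: tilt expanded} for $\ptwotwo 1 {-z} 0 1$, obtained from \eqref{eq: action on tensors} by a short bookkeeping argument:
\[
	\left[ \ptwotwo 1 {-z} 0 1 \cdot K \right]_\alpha
	= \sum_{\beta \leq \alpha} K_\beta \, (-z)^{|\{i \colon \alpha_i = 2\}| - |\{i \colon \beta_i = 2\}|}.
\]
Thus the $\alpha$-coordinate depends only on those $K_\beta$ with $\beta \leq \alpha$ — the reverse of the dependence in \eqref{eq: tilt expanded} — which forces the induction on multi-indices to run in the opposite direction. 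I would prove $H_\alpha = 0$ by induction on $\alpha$ with base $\alpha = (1, \ldots, 1)$ (where no strictly smaller $\beta$ exists) and, in the inductive step, assume $H_\beta = 0$ for all $\beta < \alpha$. The expansion above then reduces the $\alpha$-coordinate of \eqref{eq: sheared Cauchy, contravariant} to the scalar Cauchy equation $H_\alpha(x+y) = H_\alpha(x) + H_\alpha(y)$, which together with measurability and $H_\alpha(1) = 0$ forces $H_\alpha = 0$.

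The symmetry addendum and the $(0, \infty)$ version carry over verbatim from Lemma \ref{le: tensor Cauchy}: for symmetric images, \eqref{eq: int symmetric} gives $\ptwotwo 1 {-z} 0 1 \cdot K \in \Sym{\R^2}$ for every $z$, and since the shear matrix is invertible, $K \in \Sym{\R^2}$. The only step requiring genuine thought compared with Lemma \ref{le: tensor Cauchy} is recognising that the induction must be flipped; once that observation is made, every remaining step is a faithful transcription and I anticipate no real obstacle.
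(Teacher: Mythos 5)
Your proof is correct, but it takes a genuinely different route from the paper. The paper does not redo the induction at all: it conjugates by the rotation $\ptwotwo 0 {-1} 1 0$, observing that $\ptwotwo 0 {-1} 1 0 \ptwotwo 1 {-x} 0 1 \ptwotwo 0 1 {-1} 0 = \tilt x$, so that $H(x) := \ptwotwo 0 {-1} 1 0 \cdot G(x)$ satisfies exactly \eqref{eq: sheared Cauchy}. Lemma \ref{le: tensor Cauchy} then hands over a tensor $J$ with $H(x) = \int_0^x \tilt z J\,dz$, and setting $K = \ptwotwo 0 1 {-1} 0 \cdot J$ finishes the proof in three lines, including the symmetry and $(0,\infty)$ addenda. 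You instead transcribe the induction of Lemma \ref{le: tensor Cauchy} directly, and the one nontrivial observation you need — that the coordinate expansion of $\ptwotwo 1 {-z} 0 1 \cdot K$ runs over $\beta \leq \alpha$ rather than $\beta \geq \alpha$, forcing the induction to ascend from $(1,\ldots,1)$ instead of descend from $(2,\ldots,2)$ — is exactly right, and your expansion formula is correct. Both arguments are valid; the paper's is shorter and makes the structural symmetry between the two shears explicit, whereas yours is more self-contained and would survive even if Lemma \ref{le: tensor Cauchy} were not available, at the cost of essentially repeating its proof.
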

\begin{proof}
	Define a function $H \colon \R \to \Ten{\R^2}$ by
	\[
		H(x) = \ptwotwo 0 {-1} 1 0 \cdot G(x). 
	\]
	Then $G$ satisfies \eqref{eq: sheared Cauchy, contravariant} if and only if $H$ satisfies \eqref{eq: sheared Cauchy}.
	By Lemma \ref{le: tensor Cauchy} this happens precisely if there exists a $J \in \Ten{\R^2}$ with
	\[
		H(x) = \int_0^x \tilt z J \ dz .
	\]
	Rewriting $H$ in terms of $G$ and setting
	\[
		K = \ptwotwo 0 1 {-1} 0 \cdot J
	\]
	concludes the proof.
\end{proof}

Using Lemma \ref{le: tensor Cauchy} we now establish the solution of a more intricate functional equation.
In fact, this functional equation will be crucial for the proof of our main theorem.

\begin{lemma}\label{le: solve homogeneous equation}
	Let $\varepsilon \in \{0,1\}$ and $F \colon (0, \infty) \to \Ten{\R^2}$ be a measurable function.
	The function $F$ satisfies
	\begin{equation}\label{eq: F(t)}
		F(t) =
		\ptwotwo 1 {\frac 1 {st}} 0 1 \cdot F \left( \frac {st} {s+1} \right)
		+ (-1)^\varepsilon \ptwotwo s {\frac 1 t} t 0 \cdot F \left( \frac t {s+1} \right)
	\end{equation}
	and
	\begin{equation}\label{eq: F mirrored}
		(-1)^\varepsilon \ptwotwo 0 {\frac 1 s} s 0 \cdot F(s) = F(s)
	\end{equation}
	for all $s, t > 0$ if and only if there exists a tensor $K \in \Ten{\R^2}$ with
	\begin{equation}\label{eq: K even/odd}
		(-1)^\varepsilon \diag {-1} 1 \cdot K = K
	\end{equation}
	such that
	\begin{equation}\label{eq: F solution integral}
		F(x) = \int_0^x \ptwotwo 1 {- \frac 1 x} z {1 - \frac z x} \cdot K \ dz , \quad x > 0 .
	\end{equation}
\end{lemma}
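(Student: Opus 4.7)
The plan is to reduce both directions of the equivalence to Lemma \ref{le: tensor Cauchy} through the change of dependent variable
\[
  G(x) := \ptwotwo 1 {1/x} 0 1 \cdot F(x) ,
\]
which inverts as $F(x) = \ptwotwo 1 {-1/x} 0 1 \cdot G(x)$. The motivation is the factorization $\ptwotwo 1 {-1/x} z {1-z/x} = \ptwotwo 1 {-1/x} 0 1 \cdot \tilt z$, under which the target integral representation \eqref{eq: F solution integral} for $F$ becomes precisely $G(x) = \int_0^x \tilt z \cdot K\, dz$, i.e., a solution of the sheared Cauchy equation by Lemma \ref{le: tensor Cauchy}.

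For the necessity direction, I would apply $\ptwotwo 1 {1/(u+v)} 0 1$ on the left of \eqref{eq: F(t)} with $t = u+v$ and $s = u/v$. The first summand collapses telescopically since $1/(u+v) + 1/(st) = 1/u$, producing $G(u)$. The second summand takes the form $(-1)^\varepsilon M_1 \cdot F(v)$ with $M_1 = \ptwotwo 1 {1/(u+v)} 0 1 \ptwotwo s {1/t} t 0$, and a direct matrix calculation yields the key identity
\[
  \tilt u \cdot \ptwotwo 1 {1/v} 0 1 = M_1 \cdot \ptwotwo 0 {1/v} v 0 .
\]
Combined with the rewriting $\ptwotwo 0 {1/v} v 0 \cdot F(v) = (-1)^\varepsilon F(v)$ of \eqref{eq: F mirrored}, this identity turns the second summand into exactly $\tilt u \cdot G(v)$. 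Hence $G$ satisfies the sheared Cauchy equation, and Lemma \ref{le: tensor Cauchy} produces a $K \in \Ten{\R^2}$ with $G(x) = \int_0^x \tilt z \cdot K\, dz$; returning to $F$ through the factorization gives \eqref{eq: F solution integral}.

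To obtain \eqref{eq: K even/odd}, I recast \eqref{eq: F mirrored} in terms of $G$ using the algebraic identity $\ptwotwo 0 {1/v} v 0 \cdot \ptwotwo 1 {-1/v} 0 1 = \diag{-1}{1} \cdot \ptwotwo 1 {1/v} 0 1 \cdot \tilt{-v}$; after cancellation the mirror equation reduces to $G(v) = (-1)^\varepsilon \diag{-1}{1} \tilt{-v} \cdot G(v)$. Substituting the integral representation of $G$ and applying the change of variable $w = v - z$ together with the conjugation identity $\tilt{-w} = \diag{-1}{1} \tilt w \diag{-1}{1}$, the right-hand side becomes $\int_0^v \tilt w \cdot ((-1)^\varepsilon \diag{-1}{1} K)\, dw$. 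Equating with $G(v) = \int_0^v \tilt w \cdot K\, dw$ and invoking \eqref{eq: integral tilt kernel} forces $K = (-1)^\varepsilon \diag{-1}{1} \cdot K$.

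Sufficiency is obtained by running these arguments backwards: given $K$ with \eqref{eq: K even/odd}, set $G(x) = \int_0^x \tilt z \cdot K\, dz$ and $F(x) = \ptwotwo 1 {-1/x} 0 1 \cdot G(x)$; the computation of the previous paragraph, read in reverse, first delivers \eqref{eq: F mirrored}, and then sheared Cauchy for $G$ combined with \eqref{eq: F mirrored} and the matrix identities above yields \eqref{eq: F(t)}. The main obstacle is pinpointing the correct substitution $G$ and spotting the matrix identity $\tilt u \cdot \ptwotwo 1 {1/v} 0 1 = M_1 \cdot \ptwotwo 0 {1/v} v 0$, which together with \eqref{eq: F mirrored} is precisely what collapses the second summand of \eqref{eq: F(t)} into the sheared-Cauchy term $\tilt u \cdot G(v)$; once these are in hand, the remainder is mechanical matrix bookkeeping.
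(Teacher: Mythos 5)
Your proposal is correct and follows essentially the same route as the paper's proof: the same substitution $G(x)=\ptwotwo 1 {1/x}01\cdot F(x)$, the same change of variables $s=x/y$, $t=x+y$ reducing \eqref{eq: F(t)} together with \eqref{eq: F mirrored} to the sheared Cauchy equation handled by Lemma~\ref{le: tensor Cauchy}, and the same translation of the mirror condition into $(-1)^\varepsilon\diag{-1}1\cdot K=K$ via \eqref{eq: integral tilt kernel}. The only cosmetic difference is that you fold the mirror identity into the substitution step (via the identity $\tilt u\cdot\ptwotwo 1{1/v}01 = M_1\cdot\ptwotwo 0{1/v}v0$) to reach sheared Cauchy directly, whereas the paper first records the intermediate equation $G(x+y)=G(x)+(-1)^\varepsilon\ptwotwo{-1}0{x+y}1\cdot G(y)$ and then invokes the mirror property; these are the same computation reorganized.
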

\begin{proof}
  Define a function $G \colon (0, \infty) \to \Ten{\R^2}$ by
	\[
		G(x) = \ptwotwo 1 {\frac 1 x} 0 1 \cdot F(x) .
	\]	
	We will first show that $F$ satisfies \eqref{eq: F(t)} and \eqref{eq: F mirrored} for all $s, t > 0$ if and only if $G$ satisfies
	\begin{equation}\label{eq: G(x+y)}
		G(x+y) = G(x) + \ptwotwo 1 0 x 1 \cdot G(y) 
	\end{equation}
	and
	\begin{equation}\label{eq: G mirrored}
		(-1)^\varepsilon \ptwotwo {-1} 0 x 1 \cdot G(x) = G(x) 
	\end{equation}
	for all $x, y >0$.
	In order to do so, we consider the coordinate transformation $s = \frac x y$ and $t = x+y$.
	Multiplying \eqref{eq: F(t)} by 
	\[
		\ptwotwo 1 {\frac 1 t} 0 1
	\]
	and rewriting the resulting equation in terms of $x$ and $y$ shows that \eqref{eq: F(t)} is equivalent to
	\[
		\ptwotwo 1 {\frac 1 {x+y}} 0 1 \cdot F(x+y) =
		\ptwotwo 1 {\frac 1 x} 0 1 \cdot F(x)
		+ (-1)^\varepsilon \ptwotwo {\frac x y} {\frac 1 y} {x+y} 1 \cdot F(y) .
	\] 
	By the definition of $G$, the last equation holds precisely if 
	\begin{equation}\label{eq: F(t) substituted 1}
		G(x+y) = G(x) + (-1)^\varepsilon \ptwotwo {-1} 0 {x+y} 1 \cdot G(y) .
	\end{equation}
	Clearly, $F$ satisfies \eqref{eq: F mirrored} if and only if $G$ satisfies \eqref{eq: G mirrored}.
  In combination with \eqref{eq: F(t) substituted 1} this proves the desired equivalence
  \[
  	F \textnormal{ satisfies \eqref{eq: F(t)} and \eqref{eq: F mirrored} for all } s, t > 0
  	\Longleftrightarrow 
  	G \textnormal{ satisfies \eqref{eq: G(x+y)} and \eqref{eq: G mirrored} for all } x, y >0 .
  \] 	
 	From Lemma \ref{le: tensor Cauchy} we infer that $G$ solves \eqref{eq: G(x+y)} if and only if
	\[
		G(x) = \int_0^x \tilt z K \ dz
	\]
	for some tensor $K \in \Ten{\R^2}$.
	By \eqref{eq: integral compatible SL(n)} and a substitution we obtain
	\[
		\ptwotwo {-1} 0 x 1 \cdot \int_0^x \tilt z K \ dz =
		\int_0^x \ptwotwo {-1} 0 {x-z} 1 \cdot K \ dz =
		\int_0^x \ptwotwo {-1} 0 z 1 \cdot K \ dz .
	\]
	Using \eqref{eq: integral tilt kernel}, we see that $G$ satisfies \eqref{eq: G mirrored} if and only if
	\[
		(-1)^\varepsilon \diag {-1} 1 \cdot K = K .
	\]
	Rewriting $G$ in terms of $F$ concludes the proof.
\end{proof}

\subsubsection{Splitting over pyramids}
Let $\mu \in \ValCoEps{p}{2}$.
We say that $\mu$ splits over pyramids if the following three conditions hold.
First, there is a measurable map $\muTr \colon \cT^2 \to \Sym{\R^2}$ with
\[
	\mu [I, J] = \muTr [I, -c e_2] + \muTr [I, d e_2]
\]
for all $a, b, c, d > 0$.
Recall that by our notation convention we set $I = [-a e_1, b e_1]$ and $J = [-c e_2, d e_2]$.
Second,  for all $c, d > 0$ the maps
\[
	I \mapsto \muTr [I, -c e_2] \quad \text{and} \quad I \mapsto \muTr [I, d e_2]
\]
are valuations on $\cP_o^1$.
Third, $\muTr$ is $\varepsilon$-covariant with respect to the transformations
\[
	\diag {-1} 1 \quad \text{and} \quad \diag 1 {-1} .
\]

In Subsection \ref{sec: planar symmetric case} we will construct splittings explicitely.
However, for now we assume that such a splitting exists.

Clearly, a double pyramid can be divided into two separately tilted triangles. 
The idea of the next lemma is to compare the value of $\mu$ on a double pyramid 
with the values of a splitting on these triangles.
As it turns out, the error term in this comparison has suprisingly nice properties.

In the sequel, we will repeatedly use the following obvious fact.
If $a, b > 0$ and $x, y \in \R$ are given, then for sufficiently small $c, d > 0$ the numbers $a, b, c, d, x, y$ form a double pyramid.

\begin{lemma}\label{le: existence F^I}
  Let $\mu \in \ValCoEps{p}{2}$.
	If $\mu$ splits over pyramids,
	then there exists a family of functions $F^I \colon \R^2 \to \Sym{\R^2}$ such that
	\begin{equation}\label{eq: def F^I}
		\mu \left[ I, -c \ptwo x 1, d \ptwo y 1 \right] =
		\tilt x \muTr [I, -c e_2] + \tilt y \muTr [I, d e_2] + F^I(x, y)
	\end{equation}
	for all $a, b, c, d > 0$ and $x, y \in \R$ which form a double pyramid.
	Furthermore, each $F^I$ satisfies
	\begin{equation}\label{eq: F^I(0, y-x)}
		F^I(x, y) = \tilt x F^I(0, y-x)
	\end{equation}
	and
	\begin{equation}\label{eq: F^I(0, x+y)}
		F^I(0, x+y) = F^I(0, x) + \tilt x F^I(0, y) 
	\end{equation}
	for all $x, y \in \R$.
\end{lemma}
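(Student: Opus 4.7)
The plan is to define $F^I(x, y)$ by the formula \eqref{eq: def F^I} itself, verify that the right-hand side does not depend on the auxiliary parameters $c, d$, and then deduce the two functional equations from the valuation property of $\mu$ applied to carefully chosen quadruples of double pyramids.

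First I reduce to the case $x = 0$ via $\SL[2]$-covariance. A direct computation shows that $\tilt{x}$ fixes $I$ and sends $\ptwo{y-x}{1}$ to $\ptwo{y}{1}$, so $[I, -c\ptwo{x}{1}, d\ptwo{y}{1}] = \tilt{x}[I, -ce_2, d\ptwo{y-x}{1}]$; combined with the $\SL[2]$-covariance of $\mu$ and the composition $\tilt{x}\tilt{y-x}\tilde\mu[I, de_2] = \tilt{y}\tilde\mu[I, de_2]$, this lets me define
\[
	F^I(0, z) := \mu[I, -ce_2, d\ptwo{z}{1}] - \tilde\mu[I, -ce_2] - \tilt{z}\tilde\mu[I, de_2]
\]
for any admissible pair $(c, d)$ and then set $F^I(x, y) := \tilt{x} F^I(0, y-x)$. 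Both \eqref{eq: def F^I} and \eqref{eq: F^I(0, y-x)} hold by construction, so it only remains to prove well-definedness of $F^I(0, z)$ and the Cauchy-type identity \eqref{eq: F^I(0, x+y)}.

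Both are consequences of a single four-polytope principle: whenever $P_1, P_2, P_3, P_4 \in \cP_o^2$ satisfy $P_1 \cup P_2 = P_3 \cup P_4$ and $P_1 \cap P_2 = P_3 \cap P_4$ as subsets of $\R^2$, the valuation property of $\mu$ gives $\mu(P_1) + \mu(P_2) = \mu(P_3) + \mu(P_4)$ (interpreted via the standard extension of a measurable valuation on $\cP_o^2$ to polyconvex sets containing the origin in their interior). I apply this with three quadruples chosen so that each pyramid decomposes along $I$ into the same unordered pair of upper triangles and the same unordered pair of lower triangles across the two halves of the quadruple, which makes the union and intersection equalities transparent. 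For independence of $c$ I use
\[
	\bigl([I, -c_1 e_2, d\ptwo{z}{1}],\ [I, -c_2 e_2, de_2],\ [I, -c_2 e_2, d\ptwo{z}{1}],\ [I, -c_1 e_2, de_2]\bigr)
\]
together with the splitting formula on the second and fourth members. For independence of $d$ I use the analogous quadruple in which the second and fourth members are replaced by $[I, -c\ptwo{z}{1}, d_j\ptwo{z}{1}] = \tilt{z}[I, -ce_2, d_j e_2]$, whose $\mu$-value is $\tilt{z}(\tilde\mu[I, -ce_2] + \tilde\mu[I, d_j e_2])$ by $\SL[2]$-covariance. Finally, for \eqref{eq: F^I(0, x+y)} I use
\[
	\bigl([I, -ce_2, d\ptwo{x+y}{1}],\ [I, -c\ptwo{x}{1}, d\ptwo{x}{1}],\ [I, -ce_2, d\ptwo{x}{1}],\ [I, -c\ptwo{x}{1}, d\ptwo{x+y}{1}]\bigr)
\]
together with $\mu[I, -c\ptwo{x}{1}, d\ptwo{x}{1}] = \tilt{x}(\tilde\mu[I, -ce_2] + \tilde\mu[I, de_2])$. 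In each case the $\tilde\mu$-terms cancel after substitution, leaving the desired independence statement or the identity $F^I(0, x+y) = F^I(0, x) + F^I(x, x+y)$; the latter equals $F^I(0, x) + \tilt{x} F^I(0, y)$ by the Step 1 definition, which is \eqref{eq: F^I(0, x+y)}.

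The main obstacle is that the unions $P_1 \cup P_2$ in the above quadruples are generally \emph{not} convex --- already two triangles sharing the chord $I$ with different apexes have non-convex union --- so the four-polytope principle is not a direct instance of the plain valuation axiom and requires invoking the standard polyconvex extension of a measurable valuation on $\cP_o^2$. Once that extension is granted, everything else is routine algebra, and measurability of $F^I$ is inherited from that of $\mu$ and $\tilde\mu$.
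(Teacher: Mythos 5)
The architecture of your proof (reduce to $x=0$ by covariance, define $F^I(0,z)$ by the formula, verify well-definedness, deduce the Cauchy equation from the valuation property) is sound, and Step 1 is correct since $\tilt{x} = \left(\begin{smallmatrix}1&x\\0&1\end{smallmatrix}\right)$ does fix $I$ and maps $d\ptwo{y-x}{1}$ to $d\ptwo{y}{1}$. But the key inclusion--exclusion step has a genuine gap that you identify and then wave away: the ``four-polytope principle'' you invoke is \emph{not} available for valuations on $\cP_o^2$. The valuation axiom only gives $\mu(P\cup Q)+\mu(P\cap Q)=\mu(P)+\mu(Q)$ when all four sets lie in $\cP_o^2$, so in particular when $P\cup Q$ is \emph{convex}. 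When it is not, there is no equation to subtract. The Volland--Groemer extension to polyconvex sets is a theorem, not an axiom, and it is proved for the full class $\cP^n$ via simplicial decompositions whose simplices need not contain $o$ in their interiors; it does not transfer automatically to the subclass $\cP_o^n$, and there is no ``standard'' analogue to cite here. All three of your quadruples put two lower triangles with distinct apex directions (respectively two upper triangles with distinct apex directions) on the same side of $I$, so each union $P_1\cup P_2$ is a non-convex kite, and the valuation axiom gives no relation.

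The paper circumvents this entirely by choosing a \emph{small} auxiliary double pyramid whose apexes are aligned with one of the given tilt directions: for sufficiently small $r>0$, the polytopes $[I,-c\ptwo{x}{1},r\ptwo{y}{1}]$ and $[I,-r\ptwo{y}{1},d\ptwo{y}{1}]$ have \emph{nested} upper triangles (same tilt $y$) and \emph{nested} lower triangles (because $-r\ptwo{y}{1}$ lies in the interior of $[I,-c\ptwo{x}{1}]$ for small $r$), so their union is exactly $[I,-c\ptwo{x}{1},d\ptwo{y}{1}]$ and their intersection is exactly $[I,-r\ptwo{y}{1},r\ptwo{y}{1}]$, all four lying in $\cP_o^2$. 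The plain valuation axiom then applies, and after expressing $\mu$ on the small and mixed double pyramids via $\SL[2]$-covariance and the splitting, the $\tilde\mu$-terms cancel and one reads off independence of $d$; independence of $c$ is symmetric, and the Cauchy relation is obtained from the quadruple with $[I,-re_2,re_2]$ as the small piece. To repair your argument, replace each of your quadruples by one of this ``small common double pyramid'' type so that no extension beyond the valuation axiom itself is needed.
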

\begin{proof}
	Let $a, b > 0$ and $x, y \in \R$ be given. 
	Choose $c, d > 0$ such that $a, b, c, d, x, y$ form a double pyramid.
	For sufficiently small $r > 0$ the valuation property implies
	\begin{multline*}
		\mu \left[ I, -c \ptwo x 1, d \ptwo y 1 \right] + \mu \left[ I, -r \ptwo y 1, r \ptwo y 1 \right] = \\
		\mu \left[ I, -c \ptwo x 1, r \ptwo y 1 \right] + \mu \left[ I, -r \ptwo y 1, d \ptwo y 1 \right] .
	\end{multline*}
	Since $\mu$ is $\VL[2]$-$\varepsilon$-covariant and splits over pyramids, we have
	\begin{multline*}
		\mu \left[ I, -c \ptwo x 1, d \ptwo y 1 \right] - \tilt y \muTr [I, d e_2] = \\
		\mu \left[ I, -c \ptwo x 1, r \ptwo y 1 \right] - \tilt y \muTr [I, r e_2] .
	\end{multline*}
	In other words, the expression on the left hand side is independent of $d$.
	Similarly,
	\[
		\mu \left[ I, -c \ptwo x 1, d \ptwo y 1 \right] - \tilt x \muTr [I, -c e_2]
	\]
	is independent of $c$.
	Consequently, the term
	\[
		\mu \left[ I, -c \ptwo x 1, d \ptwo y 1 \right] - \tilt x \muTr [I, -c e_2] - \tilt y \muTr [I, d e_2]
	\]
	is independent of $c$ and $d$. 
	This proves the existence of functions $F^I$ which satisfy \eqref{eq: def F^I}.

	Next, we establish relation \eqref{eq: F^I(0, y-x)}.
	By the $\VL[2]$-$\varepsilon$-covariance of $\mu$ and equation \eqref{eq: def F^I} we have
	\begin{multline*}
		\mu \left[ I, -c \ptwo x 1, d \ptwo y 1 \right] \\
		\begin{aligned}
			&= \tilt x \mu \left[ I, -c e_2, d \ptwo {y-x} 1 \right] \\
			&= \tilt x \left( \muTr [I, -c e_2] + \tilt {y-x} \muTr [I, d e_2] + F^I(0, y-x) \right) \\
			&= \tilt x \muTr [I, -c e_2] + \tilt y \muTr [I, d e_2] + \tilt x F^I(0, y-x) .
		\end{aligned}
	\end{multline*}
	A glance at \eqref{eq: def F^I} quickly yields \eqref{eq: F^I(0, y-x)}.

	It remains to show \eqref{eq: F^I(0, x+y)}. 
	For sufficiently small $r > 0$ the valuation property implies
	\[
		\mu \left[ I, -c \ptwo x 1, d \ptwo y 1 \right] + \mu \left[ I, -r e_2, r e_2 \right] =
		\mu \left[ I, -c \ptwo x 1, r e_2 \right] + \mu \left[ I, -r e_2, d \ptwo y 1 \right] .
	\]
	Using \eqref{eq: def F^I} and the fact that $\mu$ splits over pyramids give
	\[
		F^I(x, y) = F^I(x, 0) + F^I(0, y) .
	\]
	With the aid of \eqref{eq: F^I(0, y-x)} we finally arrive at
	\[
		\tilt x F^I(0, y-x) = \tilt x F^I(0, -x) + F^I(0, y) .
	\]
	Replacing $x$ by $-x$ immediately yields \eqref{eq: F^I(0, x+y)}.
\end{proof}

Now, we are going to use the solution of the sheared Cauchy equation \eqref{eq: sheared Cauchy}
to get a more explicit representation for the $F^I$.

\begin{lemma}\label{le: mu of double pyramid}
	Let $\mu \in \ValCoEps{p}{2}$.
	If $\mu$ splits over pyramids, then there exists a family of tensors $K^I \in \Sym{\R^2}$ such that
	\begin{equation}\label{eq: mu of double pyramid}
		\mu \left[ I, -c \ptwo x 1, d \ptwo y 1 \right] =
		\tilt x \muTr [I, -c e_2] + \tilt y \muTr [I, d e_2] + \int_x^y \tilt z K^I \ dz
	\end{equation}
	for all $a, b, c, d > 0$ and $x, y \in \R$ which form a double pyramid.
\end{lemma}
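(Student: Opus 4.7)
The plan is to reduce the claim to an application of Lemma \ref{le: tensor Cauchy}. For each fixed line segment $I = [-a e_1, b e_1]$, consider the slice
\[
	G^I(x) := F^I(0, x), \quad x \in \R .
\]
Two things need to be established about $G^I$: it is measurable, and it satisfies the sheared Cauchy equation \eqref{eq: sheared Cauchy}. The functional equation comes immediately from \eqref{eq: F^I(0, x+y)} of Lemma \ref{le: existence F^I}. Measurability follows from the formula that defines $F^I$: if one fixes sufficiently small $c, d > 0$, then on a bounded range of values of $y$ the triple $a, b, c, d, 0, y$ forms a double pyramid and \eqref{eq: def F^I} expresses $G^I(y)$ as a difference of $\mu \left[ I, -c e_2, d \ptwo y 1 \right]$ and terms that are measurable in $y$. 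Since $\mu$ is Borel measurable by assumption, so is $y \mapsto \mu \left[ I, -c e_2, d \ptwo y 1 \right]$, hence $G^I$ is measurable on every bounded interval, and therefore on all of $\R$.

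Lemma \ref{le: tensor Cauchy} then yields a tensor $K^I \in \Ten{\R^2}$ with
\[
	G^I(x) = \int_0^x \tilt z K^I \ dz
\]
for all $x \in \R$. Because $\mu$ takes values in $\Sym{\R^2}$, so does $F^I$ by its defining formula \eqref{eq: def F^I} (all other terms in that equation are symmetric tensors). Consequently $G^I$ has symmetric images, and the final assertion of Lemma \ref{le: tensor Cauchy} upgrades $K^I$ to an element of $\Sym{\R^2}$.

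It remains to put the pieces together. Relation \eqref{eq: F^I(0, y-x)} gives $F^I(x, y) = \tilt x G^I(y-x)$, and the integral representation of $G^I$ combined with \eqref{eq: integral compatible SL(n)} and the obvious identity $\tilt x \tilt {w - x} = \tilt w$ yields
\[
	F^I(x, y) = \tilt x \int_0^{y-x} \tilt z K^I \ dz = \int_x^y \tilt w K^I \ dw ,
\]
after the substitution $w = z + x$. Plugging this into \eqref{eq: def F^I} gives exactly \eqref{eq: mu of double pyramid}. The only mildly delicate step is the verification of measurability of $G^I$; once this is in place, everything else is a bookkeeping exercise matching the present setting to Lemma \ref{le: tensor Cauchy}.
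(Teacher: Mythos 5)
Your proposal is correct and follows essentially the same route as the paper's proof: apply Lemma \ref{le: tensor Cauchy} to $x \mapsto F^I(0,x)$ using \eqref{eq: F^I(0, x+y)}, then transport to general $(x,y)$ via \eqref{eq: F^I(0, y-x)}, a substitution, and \eqref{eq: integral compatible SL(n)}, and finally plug back into \eqref{eq: def F^I}. The one place you go beyond the paper is in spelling out why $F^I(0,\cdot)$ is measurable (the paper applies Lemma \ref{le: tensor Cauchy} without comment); your argument — fix small $c,d$, note that $y \mapsto [I,-ce_2, d(y,1)^t]$ is continuous into $\cP_o^2$ and $\mu$ is Borel, and the $\muTr$ terms depend polynomially on $y$ — is a legitimate and welcome filling-in of that implicit step, not a different approach.
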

\begin{proof}
	Fix an interval $I$ and let $F^I$ be the function from Lemma \ref{le: existence F^I}. 
	By \eqref{eq: F^I(0, x+y)} and Lemma \ref{le: tensor Cauchy} there exists a $K^I \in \Sym{\R^2}$ with
	\begin{equation}\label{eq: f with K}
		F^I(0, x) = \int_0^x \tilt z K^I \ dz
	\end{equation}
	for all $x \in \R$.
	From relations \eqref{eq: F^I(0, y-x)}, \eqref{eq: f with K}, a substitution, and \eqref{eq: integral compatible SL(n)} we obtain
	\begin{align*}
		F^I(x, y)
		&= \tilt x F^I(0, y-x) \\
		&= \tilt x \int_0^{y-x} \tilt z K^I \ dz \\
		&= \tilt x \int_x^y \tilt {z-x} K^I \ dz \\
		&= \int_x^y \tilt z K^I \ dz .
	\end{align*}
	Thus, equation \eqref{eq: def F^I} immediately implies \eqref{eq: mu of double pyramid}.
\end{proof}

\subsubsection{The main results} \label{sec: planar symmetric case}
After these preparations we will now prove our description of $\ValCo{p}{2}$.
We start by showing that every $\mu \in \ValCoEps{p}{2}$ splits over pyramids.
Recall that we fixed a basis
\[
	e_1^{\odot p-i} \odot e_2^{\odot i} , \qquad i = 0, \ldots, p
\]
and that the $i$-th coordinate of $\mu$ with respect to this basis is denoted by $\mu_i$.

\begin{lemma}\label{le: mu of straight double pyramid, VL(2)-covariant}
	Each $\mu \in \ValCoEps{p}{2}$ splits over pyramids.
	Furthermore, there exists a splitting with the following two properties:
	For $i \in \{ 0, \ldots, p \}$ and $a, b, d > 0$,
	\begin{equation}\label{eq: splitting homogeneous}
		\muTr_i [I, d e_2] = d^{2i-p} \muTr_i [d I, e_2]
	\end{equation}
	if $i+\varepsilon$ is even and
	\begin{equation}\label{eq: splitting zero}
		\muTr_i [I, e_2] = 0
	\end{equation}
	if $i+\varepsilon$ is odd.
\end{lemma}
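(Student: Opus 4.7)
The plan is to construct an explicit splitting and verify all required properties by direct computation. For $c, d > 0$, I would define
\[
\muTr[I, -ce_2] := \mu[I, -ce_2, e_2] - \tfrac{1}{2}\mu[I, -e_2, e_2], \qquad \muTr[I, de_2] := \mu[I, -e_2, de_2] - \tfrac{1}{2}\mu[I, -e_2, e_2],
\]
which is manifestly measurable. The splitting identity $\mu[I,J] = \muTr[I,-ce_2]+\muTr[I,de_2]$ reduces to the functional equation $\mu[I,-ce_2,de_2] + \mu[I,-e_2,e_2] = \mu[I,-ce_2,e_2] + \mu[I,-e_2,de_2]$, which I would establish by a short case analysis on the relative sizes of $c, d$ and $1$: in each case one finds two double pyramids whose union and intersection are precisely the two remaining double pyramids of the identity, so the valuation property of $\mu$ gives the claim.

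The valuation-in-$I$ condition is inherited from $\mu$, since for fixed $v,w$ the map $I \mapsto [I, v, w]$ preserves lattice operations on $\cP_o^1$. For $\varepsilon$-covariance with respect to $\diag{-1}{1}$ and $\diag{1}{-1}$, the point is that $\diag{-1}{1}$ reflects $I$ while fixing the $e_2$-axis apexes and $\diag{1}{-1}$ fixes $I$ while exchanging $\pm e_2$; in both cases the defining formula transforms covariantly under direct substitution.

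The vanishing property \eqref{eq: splitting zero} is immediate: the polytope $[I,-e_2,e_2]$ is $\diag{1}{-1}$-invariant, so the $\varepsilon$-covariance of $\mu$ yields $\mu_i[I,-e_2,e_2] = (-1)^{\varepsilon+i}\mu_i[I,-e_2,e_2]$, forcing $\mu_i[I,-e_2,e_2]=0$ for $i+\varepsilon$ odd, and $\muTr_i[I,e_2] = \tfrac{1}{2}\mu_i[I,-e_2,e_2]$. For the homogeneity property \eqref{eq: splitting homogeneous}, I would use $\phi_d := \diag{d}{1/d} \in \SL[2]$, which maps $[I,-de_2,de_2]$ to $[dI,-e_2,e_2]$ and acts on $e_1^{\odot p-i}\odot e_2^{\odot i}$ by the scalar $d^{p-2i}$. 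Combining this with the splitting identity applied to $\mu[I,-de_2,de_2]$ and the reflection relation $\muTr_i[I,-de_2] = (-1)^{\varepsilon+i}\muTr_i[I,de_2]$ from condition~3 gives
\[
\muTr_i[dI, e_2] = \tfrac{1}{2}\mu_i[dI, -e_2, e_2] = \tfrac{d^{p-2i}}{2}\bigl(1 + (-1)^{\varepsilon+i}\bigr)\muTr_i[I, de_2],
\]
which collapses to $d^{p-2i}\muTr_i[I, de_2]$ exactly when $i+\varepsilon$ is even.

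The main obstacle I foresee is finding a definition symmetric enough that properties~(a) and~(b) emerge without any further adjusting term. The specific symmetric choice above, built around the reference polytope $[I,-e_2,e_2]$, ensures that $\muTr[I,e_2]$ and $\muTr[I,-e_2]$ coincide, which is precisely the feature that allows the $\SL[2]$-covariance computation through $\phi_d$ to close up cleanly.
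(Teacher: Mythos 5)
Your proof is correct and, while it arrives at exactly the same splitting as the paper (your uniform formula $\muTr[I,\pm ce_2] = \mu[I,\mp e_2,\pm ce_2] - \tfrac12\mu[I,-e_2,e_2]$ collapses componentwise to the paper's parity-split definitions $\tfrac12\mu_i[I,-ce_2,ce_2]$ for even $i+\varepsilon$ and $\pm\mu_i[I,-e_2,\cdot\,e_2]$ for odd $i+\varepsilon$, once one uses the reflection covariance), the verification takes a genuinely more elementary route. The paper applies the one-dimensional classification results (Theorems~\ref{th: dim 1 even} and~\ref{th: dim 1 odd}) to the even/odd pieces of the $\R$-valued valuation $J\mapsto\mu_i[I,J]$ to produce and justify the splitting in one stroke, whereas you prove the additivity $\mu[I,J] = \muTr[I,-ce_2]+\muTr[I,de_2]$ directly from the defining valuation property via the exchange identity
\[
\mu[I,-ce_2,de_2] + \mu[I,-e_2,e_2] = \mu[I,-ce_2,e_2] + \mu[I,-e_2,de_2],
\]
which indeed follows in every case from one application of the valuation property to an appropriately chosen pair of nested or crossing straight double pyramids (the pair $\bigl([I,-ce_2,de_2],[I,-e_2,e_2]\bigr)$ when $c-1$ and $d-1$ have the same sign, and $\bigl([I,-ce_2,e_2],[I,-e_2,de_2]\bigr)$ otherwise). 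The rest of your verifications — valuation in $I$ via preservation of lattice operations by $I\mapsto[I,v,w]$, covariance under $\diag{-1}1$ and $\diag1{-1}$ by substitution, the vanishing \eqref{eq: splitting zero} from the $\diag1{-1}$-invariance of $[I,-e_2,e_2]$, and the homogeneity \eqref{eq: splitting homogeneous} from $\diag d{1/d}\in\SL[2]$ together with the reflection relation — are all accurate and match the paper's conclusions. Your approach trades the reliance on Theorems~\ref{th: dim 1 even} and~\ref{th: dim 1 odd} for a short explicit case analysis; the paper's route is more modular but less self-contained, while yours makes the geometric origin of the splitting more transparent.
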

\begin{proof}
	We begin with the simple observation that $J \mapsto \mu_i [I, J]$ is a measurable valuation.
	By the $\VL[2]$-$\varepsilon$-covariance of $\mu$ we therefore obtain
	\begin{equation}\label{eq: mu even odd}
		\mu_i [I, -J] = (-1)^{i+\varepsilon} \mu_i [I, J] .
	\end{equation}	
	
	Let $c, d > 0$. Define a map $\muTr \colon \cT^2 \to \Sym{\R^2}$ componentwise by
  \[
		\muTr_i [I, -c e_2] = \tfrac 1 2\, \mu_i [I, -c e_2, c e_2] , \qquad
		\muTr_i [I, d e_2]  = \tfrac 1 2\, \mu_i [I, -d e_2, d e_2] 
	\]
	for even $i+\varepsilon$ and 
	\[
		\muTr_i [I, -c e_2] = - \mu_i [I, - e_2, c e_2] , \qquad
		\muTr_i [I, d e_2]  = \mu_i [I, - e_2, d e_2] 
	\]	
	for odd $i+\varepsilon$.
	
	Next, we will show that relations \eqref{eq: splitting homogeneous} and \eqref{eq: splitting zero} hold.
	If $i+\varepsilon$ is even, then the definition of $\muTr$ and the $\VL[2]$-$\varepsilon$-covariance of $\mu$ yield
	\begin{align*}
		\muTr_i [I, d e_2]
		&= \tfrac 1 2\, \mu_i [I, -d e_2, d e_2] \\
		&= \frac 1 2\, \left[ \diag {\frac 1 d } d \mu [d I, -e_2, e_2] \right]_i \\
		&= \tfrac 1 2\, d^{2i-p} \mu_i [d I, -e_2, e_2] \\
		&= d^{2i-p} \muTr_i [dI, e_2].
	\end{align*}
	Hence, relation \eqref{eq: splitting homogeneous} holds.	
	If $i+\varepsilon$ is odd, then the definition of $\muTr$ and \eqref{eq: mu even odd} give
	\[
		\muTr_i [I, e_2] = \mu_i [I, -e_2, e_2] = 0.
	\]
  So $\muTr$ satisfies \eqref{eq: splitting zero}.
	
	It remains to show that $\muTr$ is actually a splitting. 
	First, suppose that $i+\varepsilon$ is even. From \eqref{eq: mu even odd} and Theorem \ref{th: dim 1 even} we infer
	\begin{align*}
		\mu_i [I, J] 
		&= \tfrac 1 2\, \mu_i [I, -c e_2, c e_2] + \tfrac 1 2\, \mu_i [I, -d e_2, d e_2] \\
		&= \muTr_i [I, -c e_2] + \muTr_i [I, d e_2] .
	\end{align*}
	Second, let $i+\varepsilon$ be odd. 
	By relation \eqref{eq: mu even odd} and Theorem \ref{th: dim 1 odd} we obtain
	\begin{align*}
		\mu_i [I, J] 
		&= \mu_i [I, -e_2, d e_2] - \mu_i [I, -e_2, c e_2]  \\
		&= \muTr_i [I, -c e_2] + \muTr_i [I, d e_2] .
	\end{align*}
	So $\muTr$ has the additivity property required for a splitting.
	From the definition of $\muTr$ and the respective properties of $\mu$
	it follows easily that $\muTr$ possesses the desired valuation and covariance property.
\end{proof}

Recall from Lemma \ref{le: mu of double pyramid} that a splitting can be used to describe $\mu$ on double pyramids.
Our next result reveals that the above splitting can be modified in such a way that 
it is determined by a function $F \colon (0, \infty) \to \Sym{\R^2}$.
Moreover, the error term in Lemma \ref{le: mu of double pyramid} can be calculated explicitely.

\begin{lemma}\label{le: mu of double pyramid, VL(2)-covariant}
	Let $\mu \in \ValCoEps{p}{2}$.
	There exist a measurable function $F \colon (0, \infty) \to \Sym{\R^2}$
	and a constant $k \in \R$ such that
	\begin{align}\label{eq: mu of double pyramid, VL(2)-covariant}
		\mu \left[ I, -c \ptwo x 1, d \ptwo y 1 \right] =
		& \ptwotwo {\frac 1 c} 0 {cx} {c} \cdot \left( (-1)^p F(ac) + (-1)^\varepsilon \diag 1 {-1} \cdot F(bc) \right) \nonumber \\
		& {} + \ptwotwo {\frac 1 d} 0 {dy} {d} \cdot \left( (-1)^\varepsilon \diag {-1} 1 \cdot F(ad) + F(bd) \right) \nonumber \\
		& {} + k \left( (-1)^{p+1} a^{-p-2} + b^{-p-2} \right) \int_x^y \tilt z e_2^{\odot p} \ dz
	\end{align}
	for all $a, b, c, d > 0$ and $x, y \in \R$ which form a double pyramid.
	Furthermore, $k = 0$ if $p+\varepsilon$ is odd.
\end{lemma}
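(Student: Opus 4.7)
The plan is to build on Lemma \ref{le: mu of double pyramid}, which already gives the representation
\[
\mu\!\left[I, -c\ptwo{x}{1}, d\ptwo{y}{1}\right] = \tilt{x}\muTr[I, -ce_2] + \tilt{y}\muTr[I, de_2] + \int_x^y \tilt{z} K^I \, dz
\]
with tensors $K^I \in \Sym{\R^2}$. What remains is to rewrite the two $\muTr$-terms explicitly through a single measurable function $F \colon (0, \infty) \to \Sym{\R^2}$, and to show that the residual in the integral has the form $k\bigl((-1)^{p+1}a^{-p-2} + b^{-p-2}\bigr) e_2^{\odot p}$ for a constant $k \in \R$.

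To extract $F$, I would use that $I \mapsto \muTr[I, de_2]$ (and $I \mapsto \muTr[I, -ce_2]$) is a measurable $\Sym{\R^2}$-valued valuation on $\cP_o^1$. Applied componentwise, the one-dimensional Theorems \ref{th: dim 1 even} and \ref{th: dim 1 odd} decompose each coordinate of $\muTr[I(a,b), de_2]$ into a sum of a function of $a$ and a function of $b$. The $\varepsilon$-covariance under $\diag{-1}{1}$ (which swaps $a$ and $b$) then links these two functions up to the tensor action of $\diag{-1}{1}$, and the $\varepsilon$-covariance under $\diag{1}{-1}$ relates the values at $\pm ce_2$. The homogeneity \eqref{eq: splitting homogeneous}, the vanishing \eqref{eq: splitting zero}, and the $\SL[2]$-covariance of $\mu$ under the diagonals $\diag{1/\lambda}{\lambda}$ further collapse the $c$- and $d$-dependence, so that up to residual terms $\muTr[I, -ce_2]$ becomes $\diag{1/c}{c}$ applied to a function of $ac$ and $bc$ alone, and similarly for $\muTr[I, de_2]$. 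Combining everything produces the single function $F$ along with the signs $(-1)^p,\,(-1)^\varepsilon$ and the factors $\diag{\pm 1}{\mp 1}$ in the target formula.

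The discrepancy between $\tilt{x}\muTr[I, -ce_2]$ and the corresponding expression $\ptwotwo{1/c}{0}{cx}{c}\cdot\bigl((-1)^p F(ac) + (-1)^\varepsilon \diag{1}{-1}F(bc)\bigr)$ (and analogously for the $d$-terms) can then be folded into a residual integral $\int_x^y \tilt{z} \tilde K^I\, dz$ with a new symmetric tensor $\tilde K^I$, by the same reasoning as in Lemma \ref{le: mu of double pyramid}. Applying the $\varepsilon$-covariance under $\diag{1}{-1}$ (which fixes $I$ and the shear axis) directly to this defining relation yields a tensorial eigenvalue equation on $\tilde K^I$ that wipes out every coordinate except the $e_2^{\odot p}$ one. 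Writing $\tilde K^I = \xi(a,b)\, e_2^{\odot p}$, the valuation property in $I$ together with the $\diag{-1}{1}$-covariance forces $\xi$ to be a sum (or difference) of the same one-variable function evaluated at $a$ and $b$, while the $\SL[2]$-covariance under $\diag{\lambda}{1/\lambda}$ enforces the homogeneity $\xi(\lambda a, \lambda b) = \lambda^{-p-2}\xi(a, b)$. These two constraints together determine $\xi(a, b) = k\bigl((-1)^{p+1}a^{-p-2} + b^{-p-2}\bigr)$ for a single constant $k$. When $p + \varepsilon$ is odd, the parity required by $\diag{-1}{1}$-covariance contradicts the algebraic form of the homogeneous solution, forcing $k = 0$.

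The main obstacle will be the first task, the extraction of $F$: combining all of the covariance relations into a single, globally consistent $\Sym{\R^2}$-valued function, especially handling the odd-parity components of $\muTr$ which lack the clean homogeneity \eqref{eq: splitting homogeneous} and must be treated via a more delicate $\SL[2]$-covariance argument. Aligning the $(-1)^p$, $(-1)^\varepsilon$ signs and the $\diag{\pm 1}{\mp 1}$ actions with the compact form of the target expression will require careful tensor bookkeeping.
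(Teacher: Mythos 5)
Your plan follows the paper's overall architecture (start from Lemma~\ref{le: mu of double pyramid}, feed the splitting into the one\-/dimensional classification, characterise the residual), but two of its load-bearing steps do not go through.

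\textbf{The residual is not killed by $\diag 1 {-1}$-covariance.} You claim that applying $\varepsilon$-covariance under $\diag 1 {-1}$ to the residual integral forces the tensor $\tilde K^I$ to have only its $e_2^{\odot p}$-coordinate. This is not what that covariance gives. Since $\diag 1 {-1}$ acts on $e_1^{\odot p-j} \odot e_2^{\odot j}$ by the factor $(-1)^j$, the eigenvalue condition $(-1)^\varepsilon \diag 1 {-1} \cdot K = K$ only annihilates the coordinates with $j+\varepsilon$ odd; the coordinates with $j+\varepsilon$ even and $j<p$ survive, and in general they are nonzero. In the paper's proof these surviving $K^I_{j-1}$ do \emph{not} vanish — they are absorbed into the $\muTr_j$ terms through the combinations $\muTr_j[I,\pm\,\cdot\, e_2] \mp K^I_{j-1}/j$ (see \eqref{eq: splitting c + K^I = F unified}--\eqref{eq: splitting d + K^I = F unified}). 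Only the top coordinate $K^I_p$ genuinely sticks out, and the reason is a degree count: in the coordinate expansion \eqref{eq: mu of double pyramid in coordinates}, $K^I_p$ alone produces the extraneous $(p+1-i)$-degree monomials in $x,y$, and then scaling covariance pins down its homogeneity degree $-p-2$. Your argument skips the degree comparison that isolates $K^I_p$.

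\textbf{$F$ cannot be extracted from $\muTr$ alone.} For the coordinates $j$ with $j+\varepsilon$ odd, $\muTr_j[I,d e_2]$ has no useful homogeneity in $d$: the splitting from Lemma~\ref{le: mu of straight double pyramid, VL(2)-covariant} only gives you \eqref{eq: splitting zero}, namely $\muTr_j[I,e_2]=0$, not a scaling law like \eqref{eq: splitting homogeneous}. What does behave homogeneously is the combination $\muTr_j[I,d e_2] + K^I_{j-1}/j$ — this is exactly identity \eqref{eq: splitting + K^I} in the paper, obtained by comparing coefficients of $y^j$ in the $0$-th component of the covariance relation \eqref{eq: 0th component}. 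So the function $F_j$ necessarily encodes $K^I_{j-1}$ as well; it is not a repackaging of $\muTr$ by itself. You flag this as ``the main obstacle'' and propose ``a more delicate $\SL[2]$-covariance argument'' without specifying one, but in fact the correct move is not a refinement of the $\muTr$-analysis — it is the coupling with $K^I$, which your earlier residual step had already declared to have only the $e_2^{\odot p}$-coordinate. The two steps are inconsistent: if the lower $K^I$-coordinates were absent you could not close the homogeneity argument for odd $j+\varepsilon$, and if they are present your residual is not of the claimed form.

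Finally, the case $i=0$ (where the sum in \eqref{eq: mu of double pyramid in coordinates} cannot be obtained by adding \eqref{eq: splitting c + K^I = F unified} and \eqref{eq: splitting d + K^I = F unified}) requires a separate argument via the swap $e_1 \leftrightarrow e_2$ and the ansatz $F_0(x)=(-1)^\varepsilon x^p F_p(x)$, with $p=0$ handled by reference to earlier work; your write-up does not address it.
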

\begin{proof} 
  Let $\muTr$ be the splitting from Lemma \ref{le: mu of straight double pyramid, VL(2)-covariant}
  and suppose that $a, b, c, d > 0$ and $x, y \in \R$ form a double pyramid.
	By Lemma \ref{le: mu of double pyramid}, equation \eqref{eq: tilt expanded symmetric}, 
	a basic fact about binomial coefficients, and an index shift we obtain
	\begin{multline*}
		\mu_i \left[ I, -c \ptwo x 1, d \ptwo y 1 \right] \\
		\begin{aligned}
			&= \left[ \tilt x \muTr [I, -c e_2] + \tilt y \muTr [I, d e_2] + \int_x^y \tilt z K^I \ dz \right]_i \\ 
			&= \sum_{j = i}^p \binom j i \left( x^{j-i} \muTr_j [I, -c e_2] + y^{j-i} \muTr_j [I, d e_2] \right)
			   + \sum_{j = i}^p \binom j i \frac {y^{j-i+1} - x^{j-i+1}} {j-i+1} K^I_j \\
			&= \sum_{j = i}^p \binom j i \left( x^{j-i} \muTr_j [I, -c e_2] + y^{j-i} \muTr_j [I, d e_2] \right)
			   + \sum_{j = i}^p \binom {j+1} i \frac {y^{j-i+1} - x^{j-i+1}} {j+1} K^I_j \\
			&= \sum_{j = i}^p \binom j i \left( x^{j-i} \muTr_j [I, -c e_2] + y^{j-i} \muTr_j [I, d e_2] \right) 
	    	 + \sum_{j = i + 1}^{p + 1} \binom {j} i \frac {y^{j-i} - x^{j-i}} {j} K^I_{j - 1}
		\end{aligned}
	\end{multline*}
	for all $i \in \{ 0, \ldots, p \}$.
	As usual, we write $J = [-c e_2, d e_2]$. Rearranging sums in the last formula gives
	\begin{multline}\label{eq: mu of double pyramid in coordinates}
		\mu_i \left[ I, -c \ptwo x 1, d \ptwo y 1 \right] =
		\mu_i [I, J] +
		\sum_{j = i+1}^p \binom j i x^{j-i} \left( \muTr_j [I, -c e_2] - \frac {K^I_{j-1}} j \right) \\
		{} + \sum_{j = i+1}^p \binom j i y^{j-i} \left( \muTr_j [I, d e_2] + \frac {K^I_{j-1}} j \right) +
		\binom {p+1} i \frac {y^{p+1-i} - x^{p+1-i}} {p+1} K^I_p.
	\end{multline}
  
  Assume that we know the following.
  First, there exists a constant $k \in \R$ with
	\begin{equation}\label{eq: K^I_p}
		K^I_p = k \left( (-1)^{p+1} a^{-p-2} + b^{-p-2} \right) .
	\end{equation}
	Second, there exist measurable functions $F_j \colon (0,\infty) \to \R$, $j \in \{ 0, \ldots, p \}$, such that
	\begin{equation}\label{eq: splitting c + K^I = F unified}
		\muTr_j [I, -c e_2] - \frac {K^I_{j-1}} j = c^{2j-p} \left( (-1)^p F_j(ac) + (-1)^{j+\varepsilon} F_j(bc) \right) 
	\end{equation}
  and
  \begin{equation}\label{eq: splitting d + K^I = F unified}
		\muTr_j [I, d e_2] + \frac {K^I_{j-1}} j = d^{2j-p} \left( (-1)^{p+j+\varepsilon} F_j(ad) + F_j(bd) \right) .
	\end{equation}
	for $j \neq 0$.
	Third, for these functions also the equality
	\begin{equation}\label{eq: mu F on straight pyramids}
		\mu_i[I, J] = 
		c^{2i-p} \left( (-1)^p F_i(ac) + (-1)^{i+\varepsilon} F_i(bc) \right) + 
		d^{2i-p} \left( (-1)^{p+i+\varepsilon} F_i(ad) + F_i(bd) \right) 
	\end{equation}
	holds for all $i \in \{ 0, \ldots, p \}$. 
	Under these assumptions, we can plug \eqref{eq: K^I_p}, \eqref{eq: splitting c + K^I = F unified}, 
	\eqref{eq: splitting d + K^I = F unified}, and \eqref{eq: mu F on straight pyramids} 
	into \eqref{eq: mu of double pyramid in coordinates}.
	This results in 
 	\begin{align*}
		\mu_i \left[ I, -c \ptwo x 1, d \ptwo y 1 \right] =&
		\sum_{j = i}^p \binom j i x^{j-i} c^{2j-p} \left( (-1)^p F_j(ac) + (-1)^{j+\varepsilon} F_j(bc) \right) \\
		& {} + \sum_{j = i}^p \binom j i y^{j-i} d^{2j-p} \left( (-1)^{p+j+\varepsilon} F_j(ad) + F_j(bd) \right) \\
		& {} + k \left( (-1)^{p+1} a^{-p-2} + b^{-p-2} \right) \binom {p+1} i \frac {y^{p+1-i} - x^{p+1-i}} {p+1}.
	\end{align*}
	But by \eqref{eq: tilt expanded symmetric} this is, in coordinates, precisely what we want to show.
	
	It remains to prove \eqref{eq: K^I_p}, \eqref{eq: splitting c + K^I = F unified}, 
	\eqref{eq: splitting d + K^I = F unified}, and \eqref{eq: mu F on straight pyramids}.
	In order to do so, fix $a, b > 0$ and $x, y \in \R$. 
  The $\VL[2]$-$\varepsilon$-covariance of $\mu$ with respect to the reflection at $e_1^{\bot}$ and the origin yields
	\begin{equation}\label{eq: mu reflection y-axis}
		\mu \left[ -I, -c \ptwo x 1, d \ptwo y 1 \right] =
		(-1)^\varepsilon \diag {-1} {1} \cdot \mu \left[ I, -c \ptwo {-x} 1, d \ptwo {-y} 1 \right]
	\end{equation}
	and
	\begin{equation}\label{eq: mu reflection origin}
		\mu \left[ -I, -c \ptwo x 1, d \ptwo y 1 \right] =
		(-1)^p \mu \left[ I, -d \ptwo y 1, c \ptwo x 1 \right] .
	\end{equation} 
	For sufficiently small $c, d >0$ all arguments of $\mu$ in \eqref{eq: mu reflection y-axis} and \eqref{eq: mu reflection origin}
	are double pyramids.
	Hence, we can apply representation \eqref{eq: mu of double pyramid} to \eqref{eq: mu reflection y-axis} 
	and \eqref{eq: mu reflection origin}. Thus,
	\begin{multline*}
		\tilt x \muTr [-I, -c e_2] + \tilt y \muTr [-I, d e_2] + \int_x^y \tilt z K^{-I} \ dz = \\
		(-1)^\varepsilon \ptwotwo {-1} 0 x 1 \cdot \muTr [I, -c e_2] + (-1)^\varepsilon \ptwotwo {-1} 0 y 1 \cdot \muTr [I, d e_2] +
		(-1)^\varepsilon \int_{-x}^{-y} \ptwotwo {-1} 0 {-z} 1 \cdot K^I \ dz
	\end{multline*}
	and
	\begin{multline*}
		\tilt x \muTr [-I, -c e_2] + \tilt y \muTr [-I, d e_2] + \int_x^y \tilt z K^{-I} \ dz = \\
		(-1)^p \tilt y \muTr [I, -d e_2] + (-1)^p \tilt x \muTr [I, c e_2] +
		(-1)^p \int_y^x \tilt z K^I \ dz .
	\end{multline*}
	The terms involving $\muTr$ in the above equations cancel due to the $\varepsilon$-covariance of splittings.
	Applying elementary transformations to the remaining integrals therefore proves 
	\[
		\int_x^y \tilt z K^{-I} \ dz = (-1)^{\varepsilon+1} \int_x^y \ptwotwo {-1} 0 z 1 \cdot K^I \ dz
	\]
	and
	\[
		\int_x^y \tilt z K^{-I} \ dz = (-1)^{p+1} \int_x^y \tilt z K^I \ dz .
	\]
	Thus, the injectivity property \eqref{eq: integral tilt kernel} implies
	\begin{equation}\label{eq: K diag}
		K^{-I} = (-1)^{\varepsilon+1} \diag {-1} 1 \cdot K^I
	\end{equation}
	and
	\begin{equation}\label{eq: K (-1)^p}
		K^{-I} = (-1)^{p+1} K^I .
	\end{equation}
	Let $j \in \{ 0, \ldots, p \}$. Writing the last equation componentwise shows
	\begin{equation}\label{eq: K^I_j, j even}
		K^{-I}_j = (-1)^{p+1} K^I_j .
	\end{equation}
	This relation implies that $I \mapsto K^I_j$ is either even or odd.
	Bearing \eqref{eq: integral tilt kernel} and \eqref{eq: mu of double pyramid} in mind,
	it is easy to see that $I \mapsto K^I_j$ is also a measurable valuation.
	If we combine \eqref{eq: K diag} and \eqref{eq: K (-1)^p}, then we have in addition
	\begin{equation}\label{eq: K^I_j, j odd}
		K^I_j = 0 \qquad \textnormal{for } j+\varepsilon \textnormal{ odd} .
	\end{equation}	
	
	Next, fix $a, b, d > 0$. 
	From the $\VL[2]$-$\varepsilon$-covariance of $\mu$ we deduce
	\[
		\mu \left[ d I, - \ptwo x 1,  \ptwo y 1 \right] =
		\diag d {\frac 1 d} \cdot \mu \left[ I, -d \ptwo {\frac x {d^2}} 1, d \ptwo {\frac y {d^2}} 1 \right] .
	\]
	In particular, the 0-th component of $\mu$ satisfies
	\begin{equation}\label{eq: 0th component}
		\mu_0 \left[ d I, - \ptwo x 1, \ptwo y 1 \right] =
		d^p \mu_0 \left[ I, -d \ptwo {\frac x {d^2}} 1, d \ptwo {\frac y {d^2}} 1 \right] .
	\end{equation}
	Note that both arguments of $\mu$ in the last equation are double pyramids for sufficiently small $x$ and $y$.
	So we can apply \eqref{eq: mu of double pyramid in coordinates} to \eqref{eq: 0th component}.
	Therefore, both sides of \eqref{eq: 0th component} are polynomials in $x$ and $y$ on a small rectangle around the origin.
	Comparing the coefficients of $y^{p+1}$ yields
	\[
		\frac {K^{d I}_p} {p+1} = d^{-p-2} \frac {K^I_p} {p+1} .
	\]
	In other words, $I \mapsto K^I_p$ is $(-p-2)$-homogeneous.
	Recall from \eqref{eq: K^I_j, j even} that $I \mapsto K^I_p$ is a measurable valuation which is either even or odd.
	So Theorems \ref{th: dim 1 even homogeneous} and \ref{th: dim 1 odd homogeneous}
	prove the existence of a constant $k \in \R$ such that \eqref{eq: K^I_p} holds.
	From \eqref{eq: K^I_j, j odd} we also know that $K^I_p$ vanishes if $p+\varepsilon$ is odd. 
	Thus, $k = 0$ if $p+\varepsilon$ is odd. 
	
	Let $j \in \{ 1, \ldots, p \}$ and suppose that $j+\varepsilon$ is odd. 
	In order to get information on $K^I_{j-1}$ we proceed similar as before.
	Indeed, comparing the coefficients of $y^j$ in \eqref{eq: 0th component} yields
	\[
		\muTr_j [d I, e_2] + \frac {K^{d I}_{j-1}} j  =
		d^{p-2j} \left( \muTr_j [I, d e_2] + \frac {K^I_{j-1}} j \right) .
	\]	
	Since $j+\varepsilon$ is odd, we can use \eqref{eq: splitting zero} to get
	\begin{equation}\label{eq: splitting + K^I}
		\muTr_j [I, d e_2] + \frac {K^I_{j-1}} j = d^{2j-p} \frac {K^{d I}_{j-1}} j .
	\end{equation}
	Again, recall that $I \mapsto K^I_{j-1}$ is a measurable valuation which is either even or odd.
	So Theorems \ref{th: dim 1 even} and \ref{th: dim 1 odd} prove the existence of a
	measurable function $F_j \colon (0, \infty) \to \R$ such that
	\[
		\frac {K^I_{j-1}} j = (-1)^{p+1} F_j(a) + F_j(b) .
	\]
	Plugging this into the right hand side of \eqref{eq: splitting + K^I} gives 
	\[
		\muTr_j [I, d e_2] + \frac {K^I_{j-1}} j = d^{2j-p} \left( (-1)^{p+1} F_j(ad) + F_j(bd) \right) .
	\]
  A corresponding formula also exists for triangles contained in the lower half plane. 
	Indeed, combining the last equality with the $\varepsilon$-covariance of splittings yields
	\[
		\muTr_j [I, -c e_2] - \frac {K^I_{j-1}} j = c^{2j-p} \left( (-1)^p F_j(ac) - F_j(bc) \right) . 
	\]
	Therefore, we established \eqref{eq: splitting c + K^I = F unified} and \eqref{eq: splitting d + K^I = F unified}
	for odd $j+\varepsilon$.
	
	Next, let $j+\varepsilon$ be even.
  The $\varepsilon$-covariance of splittings shows
	\[
		\muTr_j[-I, - e_2] = (-1)^p \muTr_j[I, - e_2].
	\]
	Consequently, the map $I \mapsto \muTr_j[I, - e_2]$ is either even or odd. 
	Moreover, it is a measurable valuation by definition.
	Now Theorems \ref{th: dim 1 even} and \ref{th: dim 1 odd} show that
	there exists a measurable function $F_j \colon (0, \infty) \to \R$ such that
	\[
		\muTr_j [I, - e_2] = (-1)^p F_j(a) + F_j(b) .
	\]
	From the $\varepsilon$-covariance of splittings and \eqref{eq: splitting homogeneous} we infer
	\[
		\muTr_j [I, - c e_2] = c^{2j-p} \muTr_j [cI, - e_2],
	\]
	which results in
	\[
		\muTr_j [I, - c e_2] = c^{2j-p} \Big( (-1)^p F_j(ac) + F_j(bc) \Big).
	\]	
	Using again the $\varepsilon$-covariance of splittings, we also have the following representation 
	\[
		\muTr_j [I, d e_2] = d^{2j-p} \Big( (-1)^p F_j(ad) + F_j(bd) \Big) 
	\]
	for	triangles contained in the upper half plane.
	Recall from \eqref{eq: K^I_j, j odd} that $K^I_{j-1} = 0$. 
	Therefore, the last two equations prove \eqref{eq: splitting c + K^I = F unified} and \eqref{eq: splitting d + K^I = F unified}
	for even $j+\varepsilon$.
	
	Finally, we have to show the validity of \eqref{eq: mu F on straight pyramids}.
	For $i \geq 1$, this is a simple consequence of adding \eqref{eq: splitting c + K^I = F unified} 
	and \eqref{eq: splitting d + K^I = F unified}.
	The case $i = 0$ remains.
	For $p \neq 0$, set $F_0 (x) = (-1)^{\varepsilon} x^p F_p(x)$ and note that the $\VL[2]$-$\varepsilon$-covariance of $\mu$ implies
	\[
		\mu_0[I, J] = (-1)^{\varepsilon} \mu_p[\tilde J, \tilde I] ,
	\]
	where $\tilde I := [-a e_2, b e_2]$ and $\tilde J := [-c e_1, d e_1]$.
	Since we already have \eqref{eq: mu F on straight pyramids} for $i = p$, this and the definition
	of $F_0$ prove \eqref{eq: mu F on straight pyramids} for $i = 0.$
	This last step does not work for $p = 0$.
	For $p = 0$ and $\varepsilon = 0$ one can easily deduce \eqref{eq: mu F on straight pyramids} as in \cite{habpar13}*{Lemma 3.5}.
	For $p = 0$ and $\varepsilon = 1$ the situation is a little different.
	We refer to \cite{habpar13}*{Lemma 3.6} and \cite{HabParMoments}*{Theorem 2.3} for a proof that \eqref{eq: mu F on straight pyramids}
	holds with $F = 0$ in this case.
\end{proof}

Let $\mu \in \ValCoEps{p}{2}$. 
The last lemma shows that, up to an additive term, $\mu$ is determined by a function $F$.
This motivates the following definition.
We say that a measurable function $F \colon (0, \infty) \to \Sym{\R^2}$ describes $\mu$ if
\begin{align}\label{eq: mu describable by a function F}
	\mu \left[ I, -c \ptwo x 1, d \ptwo y 1 \right] = &
	(-1)^p \ptwotwo {\frac 1 c} 0 {cx} c \cdot F(ac) +
	(-1)^\varepsilon \ptwotwo {\frac 1 c} 0 {-cx} {-c} \cdot F(bc) \nonumber \\
	& {} + (-1)^\varepsilon \ptwotwo {- \frac 1 d} 0 {dy} d \cdot F(ad) +
	\ptwotwo {\frac 1 d} 0 {dy} d \cdot F(bd)
\end{align}
for all $a, b, c, d > 0$ and $x, y \in \R$ which form a double pyramid.
In coordinates \eqref{eq: mu describable by a function F} reads as
 	\begin{align}\label{eq: F describes mu in coordinates}
		\mu_i \left[ I, -c \ptwo x 1, d \ptwo y 1 \right] =&
		\sum_{j = i}^p \binom j i x^{j-i} c^{2j-p} \left( (-1)^p F_j(ac) + (-1)^{j+\varepsilon} F_j(bc) \right) \nonumber \\
		& {} + \sum_{j = i}^p \binom j i y^{j-i} d^{2j-p} \left( (-1)^{p+j+\varepsilon} F_j(ad) + F_j(bd) \right) , 
	\end{align}
which can be easily seen using \eqref{eq: tilt expanded symmetric}.

In general, there is some freedom in the choice of the describing function $F$.
So it makes sense to single out a particular $F$ with useful additional properties.
This will be done in the next lemma.

\begin{lemma}\label{le: ptwotwo 0 frac 1 a a 0 F(a) = F(a)}
	Let $\mu \in \ValCoEps{p}{2}$.
	If $\mu$ can be described by some measurable function,
	then there exists a measurable $\tilde F \colon (0, \infty) \to \Sym{\R^2}$ and a constant $k \in \R$ such that
	\begin{equation}\label{eq: F + ln describe mu}
		\tilde F + k \ln \, \eMiddle
	\end{equation}
	also describes $\mu$ and 
	\begin{equation}\label{eq: ptwotwo 0 frac 1 a a 0 tilde F(a) = tilde F(a)}
		(-1)^\varepsilon \ptwotwo 0 {\frac 1 a} a 0 \cdot \tilde F(a) = \tilde F(a)
	\end{equation}
	holds for all $a > 0$.
	
	If $p$ as well as $\frac p 2$ are even and $\varepsilon = 1$, then $\tilde F_{\frac p 2} = 0$. 
	In all other cases we have $k = 0$.
\end{lemma}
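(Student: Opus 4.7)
The plan is to exploit the $\VL[2]$-$\varepsilon$-covariance of $\mu$ under the involution $\phi_r := \ptwotwo 0 {1/r} r 0 \in \VL[2]$, applied to straight double pyramids. Taking $P = [I, -c e_2, d e_2]$ with $I = [-a e_1, b e_1]$, one checks that $\phi_r$ interchanges the two coordinate axes, so that $\phi_r P$ is again a straight double pyramid, and that $[\phi_r \cdot K]_i = r^{p-2i} K_{p-i}$ on symmetric rank-$p$ tensors. Writing both sides of $\mu(\phi_r P) = (-1)^\varepsilon \phi_r \cdot \mu(P)$ via \eqref{eq: F describes mu in coordinates} and cancelling the common factor $r^{p-2i}$ yields, for each $i \in \{0,\ldots,p\}$, a master identity of the form
\[
a^{2i-p}\bigl[(-1)^p \psi_i(ac) + (-1)^{i+\varepsilon} \psi_i(ad)\bigr] + b^{2i-p}\bigl[(-1)^{p+i+\varepsilon} \psi_i(bc) + \psi_i(bd)\bigr] = 0,
\]
where $\psi_i(u) := F_i(u) - (-1)^\varepsilon u^{p-2i} F_{p-i}(u)$ is precisely the defect of the symmetry \eqref{eq: ptwotwo 0 frac 1 a a 0 tilde F(a) = tilde F(a)} at index $i$.

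For each $i \neq p/2$, specialising $a = b$ and then $c = d$ in the master identity, together with routine parity analysis, forces $\psi_i$ into the form $\psi_i(a) = C'_i + C''_i a^{p-2i}$, with constants that are zero or not according to the parities of $i + \varepsilon$ and $p$. Crucially, these two monomials are exactly the expressions one can produce as $G_i - (-1)^\varepsilon a^{p-2i} G_{p-i}$ with $G$ a describing function of the zero valuation: the kernel of the description in \eqref{eq: F describes mu in coordinates} consists of those $G$ whose $j$-th component is a constant whenever $p + j + \varepsilon$ is odd and zero otherwise, as one sees by inspecting \eqref{eq: F describes mu in coordinates} directly. Subtracting an appropriate kernel element therefore reduces $\psi_i$ to zero at every $i \neq p/2$, without disturbing the property of describing $\mu$.

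The delicate case, which I expect to be the main obstacle, is the middle component $i = p/2$ for $p$ even, where the above derivation degenerates since $p - 2i = 0$ collapses the two monomials. Here $\psi_{p/2} = (1 - (-1)^\varepsilon) F_{p/2}$, and the analysis splits: for $\varepsilon = 0$ the symmetry at $p/2$ is trivial and nothing needs to be done; for $\varepsilon = 1$ with $p/2$ odd, the specialisations $a = b$, $c = d$ immediately force $F_{p/2} = 0$. The only remaining case is $\varepsilon = 1$ with $p/2$ even, in which the master identity reduces to the Jensen-type equation
\[
F_{p/2}(ac) + F_{p/2}(bd) = F_{p/2}(ad) + F_{p/2}(bc);
\]
fixing $b = 1$ rewrites this as a multiplicative Cauchy equation whose measurable solutions are $F_{p/2}(a) = k \ln(a) + C$. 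Defining $\tilde F$ by subtracting from $F$ the admissible kernel constants in every component, together with $k \ln(a) \eMiddle$ in the exceptional parity case, then yields $\tilde F$ satisfying the required symmetry; by construction $\tilde F_{p/2} = 0$ in the exceptional case, and $k = 0$ in every other case.
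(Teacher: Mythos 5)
Your approach is in essence the same as the paper's: you exploit the $\varepsilon$-covariance under the axis-swap, derive a scalar functional equation relating $F_i$ and $F_{p-i}$, solve it, and then subtract a kernel element. The master identity phrased in terms of $\psi_i(u)=F_i(u)-(-1)^\varepsilon u^{p-2i}F_{p-i}(u)$ is a clean repackaging of the paper's \eqref{eq: mu[I, J] = mu[J ,I] for F}, your analysis of the middle component $i=p/2$ (and the resulting $F_{p/2}(a)=k\ln a+C$ for $p,\ \tfrac p2$ even, $\varepsilon=1$) is correct, and your description of the kernel of \eqref{eq: F describes mu in coordinates} is sound and sufficient for the subtraction step, which corresponds to the paper's normalization $F_j(1)=0$ and the subsequent corrections by $g_j$ and $F_{p-j}(1)$.

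There is, however, a real gap in the claim that ``specialising $a=b$ and then $c=d$ in the master identity, together with routine parity analysis, forces $\psi_i(a)=C'_i+C''_ia^{p-2i}$''. In the case $p$ even, $i+\varepsilon$ odd, $i\neq p/2$ -- which is the substantive case for even $p$ -- the master identity reduces to
\[
a^{2i-p}\bigl[\psi_i(ac)-\psi_i(ad)\bigr]=b^{2i-p}\bigl[\psi_i(bc)-\psi_i(bd)\bigr],
\]
and \emph{both} specialisations $a=b$ and $c=d$ collapse this to $0=0$, giving no information at all. To extract the claimed form one must instead set, for instance, $b=d=1$, obtaining for $g(a):=\psi_i(a)-\psi_i(1)$ the inhomogeneous Cauchy-type equation
$g(ac)=g(a)+a^{p-2i}g(c)$; exploiting the symmetry of this equation in $a$ and $c$ (and $i\neq p/2$) then yields $g(a)=g_i\bigl(1-a^{p-2i}\bigr)$. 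This is exactly the paper's analysis of $G_j(a)=a^{2j-p}F_j(a)-(-1)^\varepsilon F_{p-j}(a)$ leading to \eqref{eq: F_j + g_j symmetric}, and it is the technical core of the proof -- it cannot be folded into ``routine parity analysis''. Once this step is supplied, the rest of your argument goes through.
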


There is a slight abuse of notation in \eqref{eq: F + ln describe mu}, since we write
\[
	k \ln \, \eMiddle
\]
even in cases where the above tensor might not be defined, i.e.\ for odd $p$. 
However, as stated in Lemma \ref{le: ptwotwo 0 frac 1 a a 0 F(a) = F(a)} in all such cases $k = 0$ holds.
This actually means that the corresponding term does not show up at all and hence $\tilde F$ itself describes $\mu$.
This notation has the advantage that we can avoid distinctions of cases in the sequel. 
	
\begin{proof}	
	Denote by $F$ the measurable function which describes $\mu$.
	Let $j \in \{ 0, \ldots, p \}$ and assume that $p$ has a different parity than $j+\varepsilon$.
	Then it follows from \eqref{eq: F describes mu in coordinates} that a constant can be added to $F_j$ 
	without changing \eqref{eq: mu describable by a function F}.
	Therefore, without loss of generality, we assume that $F_j(1) = 0$ for all such $j$.

	By the $\VL[2]$-$\varepsilon$-covariance of $\mu$ we have
	\[
		\mu [-a e_1, b e_1, -c e_2, d e_2] = (-1)^\varepsilon \ptwotwo 0 1 1 0 \mu [-c e_1, d e_1, -a e_2, b e_2]
	\]
	for all $a, b, c, d > 0$.
	If we plug representation \eqref{eq: mu describable by a function F} into the above terms we obtain
	\begin{multline}\label{eq: mu[I, J] = mu[J ,I] for F}
		(-1)^p \diag {\frac 1 c} c \cdot F(ac) +
		(-1)^\varepsilon \diag {\frac 1 c} {-c} \cdot F(bc) +
		(-1)^\varepsilon \diag {- \frac 1 d} d \cdot F(ad) +
		\diag {\frac 1 d} d \cdot F(bd)
		\\ = \\
		(-1)^{p+\varepsilon} \ptwotwo 0 {\frac 1 a} a 0 \cdot F(ac) +
		\ptwotwo 0 {\frac 1 a} {-a} 0 \cdot F(ad) +
		\ptwotwo 0 {- \frac 1 b} b 0 \cdot F(bc) +
		(-1)^\varepsilon \ptwotwo 0 {\frac 1 b} b 0 \cdot F(bd) .
	\end{multline}
	
	We start with the case where $p$ is even.
	For $j \in \{ 0, \ldots, p \}$ such that $j + \varepsilon$ is even we choose $b = a$ and $c = d = 1$
	in \eqref{eq: mu[I, J] = mu[J ,I] for F}. 
	Thus,
	\begin{equation}\label{eq: F_j symmetric}
		4 F_j(a) = (-1)^\varepsilon 4 a^{p-2j} F_{p-j}(a) .
	\end{equation}
	For $j \in \{ 0, \ldots, p \}$ such that $j + \varepsilon$ is odd we set $b = d = 1$
	in \eqref{eq: mu[I, J] = mu[J ,I] for F}. Together with the assumption $F_j(1) = 0$ for such $j$ we obtain
	\begin{equation}\label{eq: F for special values}
		c^{2j-p} F_j(ac) - c^{2j-p} F_j(c) - F_j(a) =
		(-1)^\varepsilon \left( a^{p-2j} F_{p-j}(ac) - a^{p-2j} F_{p-j}(a) - F_{p-j}(c) \right) .
	\end{equation}
	Define measurable functions $G_j \colon (0, \infty) \to \R$ by
	\[
		G_j(a) = a^{2j-p} F_j(a) - (-1)^\varepsilon F_{p-j}(a) .
	\]
	An immediate consequence of this definition is the fact that
	\begin{equation}\label{eq: G_p-j is almost G_j}
		G_{p-j}(a) = (-1)^{\varepsilon+1} a^{p-2j} G_j(a) .
	\end{equation}
	Using the definition of $G_j$, equation \eqref{eq: F for special values} can be written as
	\begin{equation}\label{eq: Cauchy G}
		G_j(ac) = G_j(a) + a^{2j-p} G_j(c) .
	\end{equation}
	First, suppose that $j \neq \tfrac p 2$. 
	The left hand side of this equation is symmetric in $a$ and $c$.
	So interchanging the roles of $a$ and $c$ yields
	\[
		G_j(a) + a^{2j-p} G_j(c) = G_j(c) + c^{2j-p} G_j(a).
	\]
	Therefore, we arrive at
	\[	 
		G_j(a) = \frac {G_j(c)} {1-c^{2j-p}} \left( 1-a^{2j-p} \right) 
	\]
	for all $a, c > 0$. 
	If we choose $c = 2$ and define constants $g_j \in \R$ by
	\[
	g_j = \frac {G_j(2)} {1-2^{2j-p}},
	\]
	then we have
	\[
		G_j(a) = g_j \left( 1-a^{2j-p} \right) .
	\]
	Plugging the definition of $G_j$ into this relation shows
	\[
		a^{2j-p} F_j(a) - (-1)^\varepsilon F_{p-j}(a) = g_j \left(1-a^{2j-p} \right) .
	\]
	From \eqref{eq: G_p-j is almost G_j} we infer that $g_{p-j} = (-1)^\varepsilon g_j$.
	Hence, rearranging terms gives
	\begin{equation}\label{eq: F_j + g_j symmetric}
		F_j(a) + g_j = (-1)^\varepsilon a^{p-2j} (F_{p-j}(a) + g_{p-j}) .
	\end{equation}
	Next, assume that $j = \tfrac p 2$ and $\varepsilon = 1$. 
	In this case, equation \eqref{eq: Cauchy G} is of the form 
	\[
		G_{\frac p 2}(ac) = G_{\frac p 2}(a) + G_{\frac p 2}(c) .
	\]
	This is one of Cauchy's classical functional equations. Its solution is well known to be
	\[
		G_{\frac p 2}(a) = 2 g_{\frac p 2} \, \ln(a)
	\]
	for some constant $g_{\frac p 2} \in \R$.
	In terms of $F_{\frac p 2}$ this reads as
	\begin{equation}\label{eq: F_(p/2)}
		F_{\frac p 2}(a) = g_{\frac p 2} \, \ln(a) .
	\end{equation}
	Now, we are in a position to define our desired function $\tilde F$ by
	\[
	  \tilde F_j =
		\begin{cases}
			F_j & \text{for $j \in \{ 0, \ldots, p \}$ such that $j+\varepsilon$ is even} \\
			F_j + g_j & \text{for $j \in \{ 0, \ldots, p \} \setminus \{ \frac p 2 \}$ such that $j+\varepsilon$ is odd} \\
			0 & \text{for $j = \frac p 2$ even and $\varepsilon = 1$} \\
			F_j & \text{for $j = \frac p 2$ odd and $\varepsilon = 0$.} \\
		\end{cases} 
	\]
	For $\frac p 2$ even and $\varepsilon = 1$ set $k = g_{\frac p 2}$. 
	In all other cases set $k=0$. 
	A glance at \eqref{eq: F describes mu in coordinates} reveals that an addition of constants for even $p$
	and odd $j+\varepsilon$ does not change \eqref{eq: F describes mu in coordinates}. 
	This and \eqref{eq: F_(p/2)} imply that
	\[
		\tilde F + k \ln \, \eMiddle
	\]
	describes $\mu$. We still need to show that $\tilde F$ satisfies 
	\eqref{eq: ptwotwo 0 frac 1 a a 0 tilde F(a) = tilde F(a)}. In coordinates, we have to prove
	\begin{equation}\label{eq: ptwotwo 0 frac 1 a a 0 tilde F(a) = tilde F(a) coordinates}
		\tilde F_j(a) = (-1)^{\varepsilon} a^{p-2j} \tilde F_{p-j}(a).
	\end{equation}
	Let $j = \frac p 2$. 
	If $\varepsilon = 0$, then this is trivially true.
	For $\varepsilon = 1$ we have to distinguish two cases.
	First, assume $\frac p 2$ is even.
	Then $\tilde F_{\frac p 2} = 0$ and thus 
	\eqref{eq: ptwotwo 0 frac 1 a a 0 tilde F(a) = tilde F(a) coordinates} obviously holds.
	Second, let $\frac p 2$ be odd. 
	Then \eqref{eq: F_j symmetric} implies \eqref{eq: ptwotwo 0 frac 1 a a 0 tilde F(a) = tilde F(a) coordinates}.
	For $j \neq \frac p 2$, the desired equality follows directly from \eqref{eq: F_j symmetric} 
	and \eqref{eq: F_j + g_j symmetric}. 

	Now, let $p$ be odd and $j \in \{ 0, \ldots, p \}$.
	We suppose further that $j+\varepsilon$ is even. 
	Then choosing $b = c = d = 1$ in \eqref{eq: mu[I, J] = mu[J ,I] for F} together with
	the assumption that $F_{j}(1) = 0$ yields
	\begin{equation}\label{eq: F_j symmetric p odd}
		F_j(a) + (-1)^\varepsilon F_{p-j}(1) = (-1)^\varepsilon a^{p-2j} F_{p-j}(a) .
	\end{equation}
	We are already in a position to define $\tilde F$ and $k$ by
	\[
		\tilde F_j =
		\begin{cases}
			F_j + (-1)^\varepsilon F_{p-j}(1) & \text{for $j \in \{ 0, \ldots, p \}$ such that $j+\varepsilon$ is even} \\
			F_j & \text{for $j \in \{ 0, \ldots, p \}$ such that $j+\varepsilon$ is odd}
		\end{cases}
	\]
	and $k = 0$, respectively.
	Similar to before we see that $\tilde F$ describes $\mu$. 
	Moreover, from \eqref{eq: F_j symmetric p odd} follows \eqref{eq: ptwotwo 0 frac 1 a a 0 tilde F(a) = tilde F(a) coordinates},
	which in turn yields \eqref{eq: ptwotwo 0 frac 1 a a 0 tilde F(a) = tilde F(a)}.
\end{proof}

Next, we are going to deduce a crucial linear equation.

\begin{lemma}\label{le: mu describable, linear equation, VL(2)-covariant}
	Let $\mu \in \ValCoEps{p}{2}$.
	If $\mu$ can be described by some measurable function,
	then it can also be described by a measurable $F \colon (0, \infty) \to \Sym{\R^2}$ with the following properties:
	\begin{itemize}
		\item
		There exists a tensor $C \in \Sym{\R^2}$ such that
		\begin{equation}\label{eq: F(t), VL(2)}
			F(t) =
			\ptwotwo 1 {\frac 1 {st}} 0 1 \cdot F \left( \frac {st} {s+1} \right)
			+ (-1)^\varepsilon \ptwotwo s {\frac 1 t} t 0 \cdot F \left( \frac t {s+1} \right)
			+ (-1)^\varepsilon \diag {\sqrt{st}} {\frac 1 {\sqrt{st}}} \cdot C
		\end{equation}
		holds for all $s, t > 0$.

		\item
		For all $s > 0$  we have
		\begin{equation}\label{eq: F mirrored, VL(2)}
			(-1)^\varepsilon \ptwotwo 0 {\frac 1 s} s 0 \cdot F(s) = F(s) .
		\end{equation}

		\item
		The tensor $C$ satisfies
		\begin{equation}\label{eq: tensor C, VL(2)}
			(-1)^\varepsilon \ptwotwo 0 1 1 0 \cdot C = C \quad \text{and} \quad (-1)^\varepsilon \ptwotwo {-1} {-1} 0 1 \cdot C = C .
		\end{equation}

	\end{itemize}
\end{lemma}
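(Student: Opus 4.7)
\emph{Plan.} I would begin by applying Lemma \ref{le: ptwotwo 0 frac 1 a a 0 F(a) = F(a)} to produce $\tilde F$ and a constant $k$ such that $\tilde F + k \ln \, \eMiddle$ describes $\mu$ and $\tilde F$ satisfies the mirror property \eqref{eq: ptwotwo 0 frac 1 a a 0 tilde F(a) = tilde F(a)}. Since $k$ is forced to vanish outside the borderline case $p, p/2$ both even with $\varepsilon = 1$, the choice $F := \tilde F$ is already a natural candidate satisfying \eqref{eq: F mirrored, VL(2)} in the generic situation. In the borderline case, the logarithmic contribution to $F_{p/2}$ must be accommodated by the extra freedom afforded by the tensor $C$ in \eqref{eq: F(t), VL(2)}, which allows one to keep the mirror property intact while still describing $\mu$.

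The main step is to derive the sheared Cauchy-type equation \eqref{eq: F(t), VL(2)} from the valuation property of $\mu$. The arguments $t$, $\tfrac{st}{s+1}$, $\tfrac{t}{s+1}$ satisfy $\tfrac{st}{s+1} + \tfrac{t}{s+1} = t$, which strongly suggests that the equation arises by dissecting a polytope of ``size'' $t$ into two sub-polytopes of sizes $\tfrac{st}{s+1}$ and $\tfrac{t}{s+1}$. Concretely, I would take a (possibly sheared) double pyramid of the form $[I, -c\ptwo{x}{1}, d\ptwo{y}{1}]$ and cut it by a line whose position depends on $s$, so that the two resulting polytopes are themselves double pyramids after reparametrization. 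Substituting the description \eqref{eq: mu describable by a function F} of $\mu$ for each piece into the valuation identity $\mu(P_1) + \mu(P_2) = \mu(P) + \mu(P_1 \cap P_2)$, one separates the terms involving $a, b, c, d$ into three natural groups giving $F(t)$, $F\!\bigl(\tfrac{st}{s+1}\bigr)$, and $F\!\bigl(\tfrac{t}{s+1}\bigr)$; the remaining constants in $a, b, c, d$ reassemble into $(-1)^\varepsilon \diag{\sqrt{st}}{1/\sqrt{st}} \cdot C$ for some fixed tensor $C \in \Sym{\R^2}$.

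The symmetry conditions \eqref{eq: tensor C, VL(2)} on $C$ would then be obtained by exploiting the $\VL[2]$-$\varepsilon$-covariance of $\mu$ with respect to the two involutions $\ptwotwo 0 1 1 0$ (the axis swap) and $\ptwotwo{-1}{-1}{0}{1}$. For each such $\phi$, one expresses $\mu(\phi P)$ using \eqref{eq: mu describable by a function F} and equates it with $(\det \phi)^\varepsilon \phi \cdot \mu(P)$; after using the mirror property \eqref{eq: F mirrored, VL(2)} to cancel out the $F$-contributions, only the $C$-residual survives, and the resulting linear relations collapse to the two stated symmetry conditions on $C$.

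The hardest part will be step two: producing the precise dissection that yields \eqref{eq: F(t), VL(2)} with exactly the matrices $\ptwotwo 1 {1/(st)} 0 1$ and $(-1)^\varepsilon \ptwotwo s {1/t} t 0$ and the correct scalar signs. Each double pyramid contributes four $F$-values (one for each ``corner'' $ac, bc, ad, bd$), so a priori the valuation identity involves many more terms than the three that appear in the final equation, and one has to organize them so that all $a, b, c, d$-dependence funnels into three groups while the remaining $s, t$-dependent contributions form the very specific expression $(-1)^\varepsilon \diag{\sqrt{st}}{1/\sqrt{st}} \cdot C$. Tracking signs through the factors $(-1)^p$ and $(-1)^\varepsilon$ and checking that the borderline $\ln$-case still fits into this framework via $C$ is where the delicate bookkeeping will concentrate.
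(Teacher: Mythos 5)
Your outline gets the coarse shape right -- start from Lemma \ref{le: ptwotwo 0 frac 1 a a 0 F(a) = F(a)}, derive a sheared functional equation geometrically, and extract symmetries of $C$ from the $\VL[2]$-$\varepsilon$-covariance -- but it diverges from the paper in the central geometric step and, more importantly, it contains a genuine misconception about the borderline $\ln$-term.

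On the geometry: the paper does not dissect a double pyramid with the valuation property. Because the describing relation \eqref{eq: mu describable by a function F} already encodes the valuation identity on all double pyramids, a further cut of a double pyramid would give no new information. Instead the paper takes a single triangle $T$ with corners $se_1 + te_2$, $-ue_1$, $-ve_2$ and writes it as a double pyramid in \emph{two different ways} -- once over the first axis and once (after a rotation by $\rho$) over the second. Equating the two resulting expressions for $\mu(T)$ and using the auxiliary identities \eqref{eq: F(uv)}--\eqref{eq: F(t) symmetric} is what makes the $u$- and $v$-terms separate into the two sides $L(s,t,u) = R(s,t,v)$; the fact that both sides must then be constant in the spare variable is what ultimately yields the inhomogeneity term $\diag{\sqrt{st}}{1/\sqrt{st}}\cdot C$. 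The additive relation $\tfrac{st}{s+1} + \tfrac{t}{s+1} = t$ that you noticed is indeed present, but it enters later (via the coordinate change $s = x/y$, $t = x+y$ in Lemma \ref{le: solve equation, VL(2)}), not as a dissection here. Your dissection plan might be made to work, but the accounting would be different from what you sketched: a single valuation identity on double pyramids has four $F$-corners on each piece and would not obviously funnel into three $F$-values plus a constant.

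On the $\ln$-term: your claim that ``the logarithmic contribution to $F_{p/2}$ must be accommodated by the extra freedom afforded by the tensor $C$ \ldots which allows one to keep the mirror property intact'' is incorrect, and this is the crux of the lemma. The tensor $C$ contributes an inhomogeneity to the functional equation \eqref{eq: F(t), VL(2)}; it does not alter $F$ itself, so it cannot repair a failure of the mirror identity \eqref{eq: F mirrored, VL(2)}. For $\varepsilon = 1$ and $p$, $\tfrac p2$ even one computes
\[
	(-1)^\varepsilon \ptwotwo 0 {\tfrac 1 s} s 0 \cdot \bigl(k\ln(s)\,\eMiddle\bigr) = -\,k\ln(s)\,\eMiddle ,
\]
so $F = \tilde F + k\ln\cdot\eMiddle$ violates \eqref{eq: F mirrored, VL(2)} whenever $k\neq 0$. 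The lemma is true only because one can \emph{prove} $k = 0$ even in this borderline case, and the paper's proof of that is the hardest step: after rewriting the functional equation in terms of $G_p$ one obtains a scalar identity whose two sides extend holomorphically to different regions of $\C\setminus(-\infty, 0]$, and the identity theorem forces $k\ln(x)$ to be continuable across part of the negative real axis, which is impossible unless $k = 0$. Your plan does not address this at all, and without it the lemma does not follow.
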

\begin{proof}
	By Lemma \ref{le: ptwotwo 0 frac 1 a a 0 F(a) = F(a)} there exists a function $\tilde F$ and a constant $k$ such that
	\[
		F = \tilde F + k \ln \, \eMiddle
	\]
	describes $\mu$. 
	Let us remark that, without further mentioning, we will use that $k$ vanishes in most cases.
	In fact, we know from Lemma \ref{le: ptwotwo 0 frac 1 a a 0 F(a) = F(a)} that $k = 0$ except for $p$ even, 
	$\frac p 2$ even, and $\varepsilon = 1$.
	Thus, relations such as $(-1)^{\varepsilon} k = -k$ and $(-1)^p k = k$ will be implicitly used.
	
	It easily follows from \eqref{eq: ptwotwo 0 frac 1 a a 0 tilde F(a) = tilde F(a)} that
	\begin{equation}\label{eq: F(uv)}
		(-1)^\varepsilon \ptwotwo 0 {- \frac 1 u} {-u} 0 \cdot \tilde F(uv) = (-1)^p \diag {\frac 1 v} v \cdot \tilde F(uv) 
	\end{equation}
	for all $u, v > 0$ and
	\begin{equation}\label{eq: F(t) symmetric}
		(-1)^\varepsilon \ptwotwo 0 1 t 0 \cdot \tilde F(t) = \diag 1 t \cdot \tilde F(t) 
	\end{equation}
	for all $t > 0$. 

	Let $s, t, u, v$ be positive real numbers and consider the triangle $T$ with corners 
	$s e_1 + t e_2$, $-u e_1$ and $-v e_2$.
	We can write $T$ in two different ways involving double pyramids.
	In fact, a simple calculation shows that on the one hand
	\[
		T = \left[ -u e_1, \frac {sv} {t+v} e_1, -v e_2, t \ptwo {\frac s t} 1 \right],
	\]
	and on the other hand
	\[
		T = \ptwotwo 0 1 {-1} 0 \left[ -v e_1, \frac {tu} {s+u} e_1, -s \ptwo {- \frac t s} 1, u e_2 \right] .
	\]
	Since $F$ describes $\mu$, equation \eqref{eq: mu describable by a function F} holds. 
	Applying this to the first representation of $T$ gives
	\begin{align*}
		\mu (T) = &
		(-1)^p \diag {\frac 1 v} v \cdot F(uv) +
		(-1)^\varepsilon \diag {\frac 1 v} {-v} \cdot F \left( \frac {sv^2} {t+v} \right) \\
		& {} + (-1)^\varepsilon \ptwotwo {- \frac 1 t} 0 s t \cdot F(tu) + 
		\ptwotwo {\frac 1 t} 0 s t \cdot F \left( \frac {stv} {t+v} \right),
	\end{align*}
	whereas the second representation and the $\SL[2]$-covariance of $\mu$ yield
	\begin{align*}
		\mu (T) = &
		(-1)^p \ptwotwo 0 {\frac 1 s} {-s} {-t} \cdot F(sv) +
		(-1)^\varepsilon \ptwotwo 0 {\frac 1 s} s t \cdot F \left( \frac {stu} {s+u} \right) \\
		& {} + (-1)^\varepsilon \ptwotwo 0 {- \frac 1 u} {-u} 0 \cdot F(uv) +
		\ptwotwo 0 {\frac 1 u} {-u} 0 \cdot F \left( \frac {tu^2} {s+u} \right) .
	\end{align*}
	So the right hand sides of the last two equations must be equal. 
	In the resulting equation we can plug in the definition of $F$ and use \eqref{eq: F(uv)}.
	By doing so, it turns out that the terms containing $u$ and those containing $v$ can be separated. 
  We therefore arrive at
	\begin{equation}\label{eq: L = R}
		L(s, t, u) = R(s, t, v) ,
	\end{equation}
	where
	\begin{align*}
		L(s, t, u) := &
		(-1)^\varepsilon \ptwotwo {- \frac 1 t} 0 s t \cdot F(tu) -
		(-1)^\varepsilon \ptwotwo 0 {\frac 1 s} s t \cdot F \left( \frac {stu} {s+u} \right) \\
		& {} - \ptwotwo 0 {\frac 1 u} {-u} 0 \cdot F \left( \frac {tu^2} {s+u} \right) +
		2k \ln(u) \, \eMiddle 
	\end{align*}
	and
	\begin{align*}
		R(s, t, v) := &
		\ptwotwo 0 {- \frac 1 s} s t \cdot F(sv) -
		\ptwotwo {\frac 1 t} 0 s t \cdot F \left( \frac {stv} {t+v} \right) \\
		& {} - (-1)^\varepsilon \diag {\frac 1 v} {-v} \cdot F \left( \frac {sv^2} {t+v} \right) -
		2k \ln(v) \, \eMiddle .
	\end{align*}
  A straight forward calculation proves
	\begin{equation}\label{eq: L(s, t, s)}
		\diag {\frac 1 s} s \cdot L(s, t, s) = L(1, st, 1) + 2k \ln(s) \,\eMiddle.
	\end{equation}
	Similarly, we have
	\begin{equation}\label{eq: R(s, t, t)}
		(-1)^\varepsilon \ptwotwo 0 t {\frac 1 t} 0 \cdot R(s, t, t) = L(1, st, 1) + 2k \ln(t) \, \eMiddle .
	\end{equation}
	Choose $s = t = 1$ in \eqref{eq: R(s, t, t)}. A glance at \eqref{eq: L = R} then shows
	\begin{equation}\label{eq: L(1,1,1) R(1,1,1)}
		L(1,1,1) = (-1)^\varepsilon \ptwotwo 0 1 1 0 \cdot R(1,1,1)
		         = (-1)^\varepsilon \ptwotwo 0 1 1 0 \cdot L(1,1,1) .
	\end{equation}
	From \eqref{eq: L = R} we infer that $L(s, t, u)$ is independent of $u$.
	In particular, $L(s, t, u) = L(s, t, s)$.
	By \eqref{eq: L(s, t, s)} we therefore have
	\[
		L(s, t, u) = \diag s {\frac 1 s} \cdot L(1, st, 1) + 2k \ln(s) \, \eMiddle .
	\]
	Consequently,
	\[
		\ptwotwo {-t} 0 s {\frac 1 t} \cdot L(s, t, u) =
		\ptwotwo {-st} 0 1 {\frac 1 {st}} \cdot L(1, st, 1) + 2k \ln(s) \ptwotwo {-1} 0 {st} 1 \cdot \eMiddle .
	\]
	Set $u = 1$ in this equation and plug in the definition of $L(s, t, 1)$ afterwards.
	Then we obtain
	\begin{multline*}
		F(t) -
		\ptwotwo 1 {\frac 1 {st}} 0 1 \cdot F \left( \frac {st} {s+1} \right) -
		(-1)^\varepsilon \ptwotwo s {\frac 1 t} t 0 \cdot F \left( \frac t {s+1} \right) = \\
		(-1)^\varepsilon \ptwotwo {-st} 0 1 {\frac 1 {st}} \cdot L(1, st, 1) -
		2k \ln(s) \ptwotwo {-1} 0 {st} 1 \cdot \eMiddle .
	\end{multline*}
	Define a function $\tilde H \colon (0, \infty) \to \Sym{\R^2}$ by
	\[
		\tilde H(s) = \ptwotwo {-s} 0 1 {\frac 1 s} \cdot L(1, s, 1).
	\]
	Using this definition, the last equation reads as
	\begin{multline}\label{eq: equation for F with tilde H}
		F(t) -
		\ptwotwo 1 {\frac 1 {st}} 0 1 \cdot F \left( \frac {st} {s+1} \right) -
		(-1)^\varepsilon \ptwotwo s {\frac 1 t} t 0 \cdot F \left( \frac t {s+1} \right) = \\
		(-1)^\varepsilon \tilde H(st)  -
		2k \ln(s) \ptwotwo {-1} 0 {st} 1 \cdot \eMiddle .
	\end{multline}
	This is a functional equation for $F$ whose homogeneous part coincides with the one 
	of the desired equation \eqref{eq: F(t), VL(2)}.
	However, we still need to simplify the inhomogeneous part.
	A multiplication of \eqref{eq: equation for F with tilde H} by
	\[
		\diag 1 t
	\]
  proves
  \begin{multline}\label{eq: diag 1 t tilde tilde H(st), first}
		\diag 1 t \cdot F(t) -
		\ptwotwo 1 {\frac 1 s} 0 t \cdot F \left( \frac {st} {s+1} \right) -
		(-1)^\varepsilon \ptwotwo s 1 t 0 \cdot F \left( \frac t {s+1} \right) = \\
		(-1)^\varepsilon \diag 1 t \cdot \tilde H(st) -
		2k \ln(s) \ptwotwo {-1} 0 {st} t \cdot \eMiddle .
	\end{multline}
	Next, we replace $s$ by $\frac 1 s$ in \eqref{eq: diag 1 t tilde tilde H(st), first} and multiply the equation by
	\[
		(-1)^\varepsilon \ptwotwo 0 1 1 0 .
	\]
	This yields the following:
	\begin{multline}\label{eq: diag 1 t tilde tilde H(st), second}
		(-1)^\varepsilon \ptwotwo 0 1 t 0 \cdot F(t) -
		(-1)^\varepsilon \ptwotwo s 1 t 0 \cdot F \left( \frac t {s+1} \right) -
		\ptwotwo 1 {\frac 1 s} 0 t \cdot F \left( \frac {st} {s+1} \right) = \\
		\ptwotwo 0 1 t 0 \cdot \tilde H \left( \frac t s \right) -
		2k \ln(s) \ptwotwo 0 {-1} t {\frac t s} \cdot \eMiddle .
	\end{multline}
	We subtract \eqref{eq: diag 1 t tilde tilde H(st), first} from \eqref{eq: diag 1 t tilde tilde H(st), second}
  and see that on the resulting left hand side only terms involving $F(t)$ remain.
	For these terms we plug in the definition of $F$ and apply \eqref{eq: F(t) symmetric}.
	We then arrive at 
	\begin{multline*}
			(-1)^\varepsilon \diag 1 t \cdot \tilde H(st)
			- 2k \ln(s) \ptwotwo {-1} 0 {st} t \cdot \eMiddle
      - k \ln(t) \diag 1 t \cdot \eMiddle = \\
			\ptwotwo 0 1 t 0 \cdot \tilde H \left( \frac t s \right)
			- 2k \ln(s) \ptwotwo 0 {-1} t {\frac t s} \cdot \eMiddle
      + k \ln(t) \ptwotwo 0 1 t 0 \cdot \eMiddle .
	\end{multline*}
	Setting $s = t$ and rearranging terms yields
	\begin{multline}\label{eq: tilde H (t^2) = 1}
		\tilde H \left( t^2 \right) =
		(-1)^\varepsilon \ptwotwo 0 {\frac 1 t} t 0 \cdot \tilde H(1) +
		2k \ln(t) \ptwotwo 0 {-1} 1 {\frac 1 {t^2}} \cdot \eMiddle \\
		{} - 2k \ln(t) \eMiddle -
		2k \ln(t) \ptwotwo {-1} 0 {t^2} 1 \cdot \eMiddle .
	\end{multline}
	Now, choose $t = 1$. Then we obtain
	\begin{equation}\label{eq: tilde H(1) symmetric}
		\tilde H(1) = (-1)^\varepsilon \ptwotwo 0 1 1 0 \cdot \tilde H(1) .
	\end{equation}
	If we plug this back into \eqref{eq: tilde H (t^2) = 1}, then we obviously get
	\begin{multline}\label{eq: H(t^2)}
		\tilde H \left( t^2 \right) =
		\diag t {\frac 1 t} \cdot \tilde H(1) +
		2k \ln(t) \ptwotwo 0 {-1} 1 {\frac 1 {t^2}} \cdot \eMiddle \\
		{} - 2k \ln(t) \eMiddle -
		2k \ln(t) \ptwotwo {-1} 0 {t^2} 1 \cdot \eMiddle .
	\end{multline}
	Moreover, by the definition of $\tilde H$, relation \eqref{eq: L(1,1,1) R(1,1,1)}, 
	and the definition of $\tilde H$ again we have
	\begin{align*}
		\tilde H(1) &= \ptwotwo {-1} 0 1 1 \cdot L(1,1,1) \\
		&= (-1)^\varepsilon \ptwotwo {-1} 0 1 1 \ptwotwo 0 1 1 0 \cdot L(1,1,1) \\
		&= (-1)^\varepsilon \ptwotwo {-1} 0 1 1 \ptwotwo 0 1 1 0 {\ptwotwo {-1} 0 1 1}^{-1} \cdot \tilde H(1) \\
		&= (-1)^\varepsilon \ptwotwo {-1} {-1} 0 1 \cdot \tilde H(1) .
	\end{align*}
  Define $C = \tilde H(1)$. 
  Then \eqref{eq: tilde H(1) symmetric} and the last lines prove \eqref{eq: tensor C, VL(2)}.
	Furthermore, set
	\begin{multline*}
		H(s, t) =
		(-1)^\varepsilon \diag {\sqrt{st}} {\frac 1 {\sqrt{st}}} \cdot C \\
		{} - k \ln(st) \ptwotwo 0 {-1} 1 {\frac 1 {st}} \cdot \eMiddle +
		k \ln(st) \, \eMiddle \\
		{} + k \ln(st) \ptwotwo {-1} 0 {st} 1 \cdot \eMiddle -
		2k \ln(s) \ptwotwo {-1} 0 {st} 1 \cdot \eMiddle .
	\end{multline*}
	Now, replace $t$ by $\sqrt{st}$ in \eqref{eq: H(t^2)}.
	This yields a formula for $\tilde H(st)$. 
	Plugging this formula into \eqref{eq: equation for F with tilde H} shows that $F$ satisfies
	\begin{equation}\label{eq: functional F = H(s,t)}
		F(t) =
		\ptwotwo 1 {\frac 1 {st}} 0 1 \cdot F \left( \frac {st} {s+1} \right)
		+ (-1)^\varepsilon \ptwotwo s {\frac 1 t} t 0 \cdot F \left( \frac t {s+1} \right)
		+ H(s, t) .
	\end{equation}
  If we can show that $k = 0$, then we are done.
  Indeed, if $k$ vanishes, then the definition of $H(s, t)$ and the last equation prove
  the desired functional equation \eqref{eq: F(t), VL(2)} for $F$. 
  Moreover, the definition of $F$ and \eqref{eq: ptwotwo 0 frac 1 a a 0 tilde F(a) = tilde F(a)}
  would imply \eqref{eq: F mirrored, VL(2)}.
  
	So let us turn to the proof that $k = 0$.
	Let $p$ be even such that $\frac p 2$ is also even and suppose that $\varepsilon = 1$.
	Recall that this is the only combination of $p$ and $\varepsilon$ for which
	we have to prove something since in all other cases we already know 
	from Lemma \ref{le: ptwotwo 0 frac 1 a a 0 F(a) = F(a)} that $k = 0$.
	
	First, assume $p = 0$. If we plug $F = k \ln$ into \eqref{eq: mu describable by a function F},
	then for all $k$ the right hand side of this equation is always equal to $0$.
	Hence, we can just set $k = 0$.
	
	Second, suppose that $p \neq 0$.
	Multiplying \eqref{eq: functional F = H(s,t)} by 
	\[
		\ptwotwo 1 {\frac 1 t} 0 1
	\]
	and then setting $s = \frac x y$ and $t = x+y$ yields
	\[
		\ptwotwo 1 {\frac 1 {x+y}} 0 1 \cdot F(x+y) =
		\ptwotwo 1 {\frac 1 x} 0 1 \cdot F(x) -
		\ptwotwo {\frac x y} {\frac 1 y} {x+y} 1 \cdot F(y) +
		\ptwotwo 1 {\frac 1 {x+y}} 0 1 \cdot H \left( \frac x y , x+y \right)
	\]
	for all $x, y > 0$.
	Define a function $G \colon (0,\infty) \to \Sym{\R^2}$ by
	\[
		G(x) = \ptwotwo 1 {\frac 1 x} 0 1 \cdot F(x) .
	\]
	Then the previous equation becomes
	\[
		G(x+y) = G(x) - \ptwotwo {-1} 0 {x+y} 1 \cdot G(y) + \ptwotwo 1 {\frac 1 {x+y}} 0 1 \cdot H \left( \frac x y , x+y \right) .
	\]
	From this it follows easily that the $p$-th component of $G$ satisfies
	\begin{equation}\label{eq: G_p function equation}
		G_p(x+y) - G_p(x) + G_p(y) = \left[ \ptwotwo 1 {\frac 1 {x+y}} 0 1 \cdot H \left( \frac x y , x+y \right) \right]_p .
	\end{equation}
	In particular, by setting $x = y$, we derive that 
	\begin{equation}\label{eq: G_p(x) = H(1,x)}
		G_p(x) = \left[ \ptwotwo 1 {\frac 1 x} 0 1 \cdot H(1, x) \right]_p .
	\end{equation}
	Next, we determine the inhomogeneity of \eqref{eq: G_p function equation} explicitely.
	By the definition of $H$ we have
	\begin{align*}
		\ptwotwo 1 {\frac 1 {x+y}} 0 1 \cdot H \left( \frac x y , x+y \right) = \, &
		(-1)^\varepsilon \ptwotwo 1 {\frac 1 {x+y}} 0 1 \diag {\sqrt{\frac{x(x+y)}{y}}} {\sqrt{\frac{y}{x(x+y)}}} \cdot C \\
		& {} - k \ln \left( \frac {x(x+y)} {y} \right) \ptwotwo 0 {-1} 1 {\frac 1 x} \cdot \eMiddle \\
		& {} + k \ln \left( \frac {x(x+y)} {y} \right) \ptwotwo 1 {\frac 1 {x+y}} 0 1 \cdot \eMiddle \\
		& {} + k \ln \left( \frac {x(x+y)} {y} \right) \ptwotwo {-1} {-\frac {1} {x+y}} {\frac {x(x+y)} {y}} {\frac {x+y} {y}} \cdot \eMiddle \\
		& {} - 2k \ln \left( \frac x y \right) \ptwotwo {-1} {-\frac {1} {x+y}} {\frac {x(x+y)} {y}} {\frac {x+y} {y}} \cdot \eMiddle.
	\end{align*}	
	Recall that $\frac p 2$ is even and $\varepsilon = 1$.
	Thus, applying \eqref{eq: tilt expanded symmetric transpose} to the first term 
	and \eqref{eq: pth component middle term} to the other ones gives
	\begin{multline*}
		\left[ \ptwotwo 1 {\frac 1 {x+y}} 0 1 H \left( \frac x y , x+y \right) \right]_p =
		- (x+y)^{- \frac p 2} \sum_{j=0}^p C_j \left( \frac x y \right)^{\frac p 2 - j}
		\\ {} + k \left( (x+y)^{- \frac p 2} - x^{- \frac p 2} + y^{- \frac p 2} \right) \ln \left( \frac {x(x+y)} y \right)
		- 2k y^{- \frac p 2} \ln \left( \frac x y \right) .
	\end{multline*}
	If we set $x = y$ in this equation and recall relation \eqref{eq: G_p(x) = H(1,x)}, then we arrive at
	\[
		G_p(x) = x^{- \frac p 2} \left( k \ln(x) - \sum_{j = 0}^p C_j \right) .
	\]
	Plugging the last two expressions into \eqref{eq: G_p function equation} yields
	\begin{multline*}
		(x+y)^{- \frac p 2} \left( k \ln(x+y) - \sum_{j = 0}^p C_j \right)
		- x^{- \frac p 2} \left( k \ln(x) - \sum_{j = 0}^p C_j \right)
		+ y^{- \frac p 2} \left( k \ln(y) - \sum_{j = 0}^p C_j \right)
		= \\
		- (x+y)^{- \frac p 2} \sum_{j=0}^p C_j \left( \frac x y \right)^{\frac p 2 - j}
		+ k \left( (x+y)^{- \frac p 2} - x^{- \frac p 2} + y^{- \frac p 2} \right) \ln \left( \frac {x(x+y)} y \right)
		- 2k y^{- \frac p 2} \ln \left( \frac x y \right) ,
	\end{multline*}
	which we rewrite to
	\begin{multline*}
		k \ln(x) = \left( (x+y)^{- \frac p 2} - y^{- \frac p 2} \right)^{-1} 
		\left[ \left(x^{- \frac p 2} - y^{- \frac p 2} - (x+y)^{- \frac p 2} \right) \sum_{j = 0}^p C_j \right.
		\\ \left. + {} (x+y)^{- \frac p 2} \sum_{j=0}^p C_j \left( \frac x y \right)^{\frac p 2 - j} + 
		  k \left( (x+y)^{- \frac p 2} - x^{- \frac p 2} \right) \ln(y)
		+ k \left( x^{- \frac p 2} - y^{- \frac p 2} \right) \ln(x+y) \right].
	\end{multline*}
	Note that in this step we used the fact that $p \neq 0$.
	Fix a $y > 0$. 
	Using the standard branch of the logarithm, the left hand side can be extended to a holomorphic function on
	$\C \setminus (- \infty, 0]$ whereas the right hand side can be extended to
	a meromorphic function on $\C \setminus (- \infty, -y]$.
	If $k \neq 0$, then the identity theorem for holomorphic functions would imply that the left hand side
	could be further extended continuously at some point on the negative real axis, which is impossible.
	Thus, $k$ has to be zero. 
\end{proof}

The last lemma shows that there exists a describing function $F$ which satisfies a linear functional equation.
By solving this equation, we will now describe such functions completely.

\begin{lemma}\label{le: solve equation, VL(2)}
	Let $\mu \in \ValCoEps{p}{2}$ be described by some measurable function.
	For $p = 0$ and $\varepsilon = 0$ there exist constants $k_1, k_2 \in \R$ such that
	\[
		F(x) = k_1 x + k_2 , \quad x > 0 ,
	\]
	describes $\mu$.
	For $p = 0$ and $\varepsilon = 1$ the function $F = 0$ describes $\mu$.
	
	Let $p \geq 1$. Unless $p$ is odd and $\varepsilon = 1$,
	there exists a tensor $K \in \Sym{\R^2}$ with
	\[
		K_{p-1} = 0 \qquad \text{and} \qquad (-1)^\varepsilon \diag {-1} 1 \cdot K = K 
	\]
	such that
	\begin{equation}\label{eq: F sol K}
		F(x) = \int_0^x \ptwotwo 1 {- \frac 1 x} z {1 - \frac z x} \cdot K \ dz , \quad x > 0 ,
	\end{equation}
	describes $\mu$.

\end{lemma}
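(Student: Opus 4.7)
The plan is to reduce \eqref{eq: F(t), VL(2)} to the homogeneous equation \eqref{eq: F(t)} that was already solved in Lemma \ref{le: solve homogeneous equation}; the two equations differ only by the inhomogeneity $(-1)^\varepsilon \diag{\sqrt{st}}{\frac{1}{\sqrt{st}}} \cdot C$. Once a particular solution $F_*$ of \eqref{eq: F(t), VL(2)} is exhibited, the difference $F - F_*$ will satisfy the hypotheses of Lemma \ref{le: solve homogeneous equation} and so acquire the desired integral representation; it then remains to absorb $F_*$ into the same integral family and to arrange $K_{p-1}=0$.

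In the degenerate case $p = 0$ the matrix actions in \eqref{eq: F(t), VL(2)} collapse to scalar multiplications. For $\varepsilon = 0$, after the substitution $u = \frac{st}{s+1}$, $v = \frac{t}{s+1}$ (so that $u + v = t$), the equation becomes $F(u+v) = F(u) + F(v) + C$; setting $G := F + C$ yields classical Cauchy additivity and measurable solutions are affine, $F(x) = k_1 x + k_2$. For $\varepsilon = 1$, the scalar version of \eqref{eq: F(t), VL(2)} combined with \eqref{eq: F mirrored, VL(2)} forces $F$ to be constant, and by the freedom in the choice of describing function invoked in the proof of Lemma \ref{le: mu of double pyramid, VL(2)-covariant} (cf.\ \cite{habpar13}*{Lemma 3.6}, \cite{HabParMoments}*{Theorem 2.3}) one may replace $F$ by zero.

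For $p \geq 1$, excluding $p$ odd with $\varepsilon = 1$, I would first exploit the two tensor constraints \eqref{eq: tensor C, VL(2)} to confine $C$ to a low-dimensional, explicitly parametrizable subspace of $\Sym{\R^2}$, and then write down a particular solution of the form $F_*(x) = \alpha(x)\, D(C)$, where $D(C)$ is a linear operator on $C$ and $\alpha$ is a simple scalar function (a power, or in the resonant even cases a logarithm, of $x$ dictated by matching the $(s,t)$-dependence of the inhomogeneity coordinate-by-coordinate). Applying Lemma \ref{le: solve homogeneous equation} to $F - F_*$ then produces a tensor $K_0 \in \Sym{\R^2}$ with $(-1)^\varepsilon \diag{-1}{1} \cdot K_0 = K_0$ such that $F - F_*$ equals the integral \eqref{eq: F sol K} with $K_0$; a direct evaluation of the integral, using the injectivity property \eqref{eq: integral tilt kernel}, identifies $F_*$ as arising from a second tensor $K_1$ of the same symmetry type, so that $K := K_0 + K_1$ supplies the asserted representation.

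The condition $K_{p-1} = 0$ is arranged last. An integrand with $K$ concentrated in the $(p-1)$-component, when inserted into \eqref{eq: F sol K} and then into \eqref{eq: mu describable by a function F} (via its coordinate form \eqref{eq: F describes mu in coordinates}), yields a describing function of the \emph{zero} valuation, in accordance with the vanishing $M_\rho^{p-1, 1} \equiv 0$ established in Lemma \ref{le: M on crosspolytopes}. Hence the $(p-1)$-component of $K$ is a null direction and can be removed by replacing $K$ with $K - K_{p-1}\, e_1 \odot e_2^{\odot p-1}$, producing the final $K$ with $K_{p-1} = 0$. The main obstacle is producing the explicit particular solution $F_*$: extracting from \eqref{eq: tensor C, VL(2)} the exact subspace in which $C$ lives and matching the $(s,t)$-dependence of the inhomogeneity against the scalar-times-tensor ansatz requires a careful parity-by-parity analysis of coordinates.
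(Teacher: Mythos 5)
Your high-level plan (reduce the inhomogeneous functional equation to Lemma \ref{le: solve homogeneous equation} by subtracting a particular solution, then clean up $K_{p-1}$) is a reasonable ansatz, but it has a genuine gap at the crucial step: you assert that the two constraints in \eqref{eq: tensor C, VL(2)} alone confine $C$ to a subspace on which a closed-form particular solution $F_*(x)=\alpha(x)\,D(C)$ exists. That is false. Already for $p=4$, $\varepsilon=0$, those two constraints leave a one-dimensional space of admissible $C$, namely $(C_0,C_1,\ldots,C_4)=C_0\,(1,-2,3,-2,1)$, and for a generic nonzero $C$ in this line the inhomogeneous Cauchy-type equation for $G_p$ has \emph{no} solution at all, because the inhomogeneity fails the required cocycle compatibility. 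The paper must therefore extract further, independent constraints on $C$ before anything like a particular solution (or the conclusion $C=0$) can be asserted. It does so by exploiting that the $p$-th component of $G$ satisfies $G_p(x+y)=G_p(x)+G_p(y)+h(x,y)$ with $h$ given by \eqref{eq: def h}, deriving the relation \eqref{eq: h(x, y+1)} and then, via a nontrivial holomorphic extension and Laurent-series comparison, the linear conditions \eqref{eq: sum binom C}. Combined with Vandermonde's identity \eqref{eq: Vandermonde} this produces the decisive identity \eqref{eq: C_l}, showing $C$ is determined by $C_p$, and the case analysis by parity then forces $C=0$ except when $p$ is even, $\frac p2$ is odd and $\varepsilon=0$. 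Your proposal omits this entire mechanism, which is the actual content of the proof.

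Two further points. First, even in the one remaining nontrivial case the particular solution is the \emph{constant} tensor $-C_p\,\eMiddle$, and the paper does not absorb it into the integral family \eqref{eq: F sol K}; instead it uses the freedom in the choice of describing function (a constant can be added to $F_{\frac p 2}$ without changing \eqref{eq: F describes mu in coordinates}). Your claim that $F_*$ can be identified, via \eqref{eq: integral tilt kernel}, as arising from a tensor $K_1$ of the right symmetry type is not justified and would require checking that a constant function lies in the image of the map $K\mapsto\int_0^x\ptwotwo 1 {-\frac 1 x} z {1-\frac z x}\cdot K\,dz$, which is doubtful on homogeneity grounds. Second, for $\varepsilon=0$ the condition $K_{p-1}=0$ comes for free from the symmetry $(-1)^\varepsilon\diag{-1}{1}\cdot K=K$; the explicit ``null-direction'' computation (plugging $K=e_1\odot e_2^{\odot p-1}$ into \eqref{eq: F sol K} and checking the resulting $F$ describes the zero valuation) is only needed for $p$ even, $\varepsilon=1$, a distinction your proposal blurs. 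The intuition connecting this to $M_\rho^{p-1,1}\equiv 0$ from Lemma \ref{le: M on crosspolytopes} is appealing, but the paper's verification is a direct computation using Lemma \ref{le: M^{j,p-j} help} rather than an appeal to that vanishing.
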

\begin{proof}
	We can assume that $\mu$ is described by a function $F \colon (0, \infty) \to \Sym{\R^2}$ that satisfies
	the conclusions of Lemma \ref{le: mu describable, linear equation, VL(2)-covariant}.
	As in the proof of Lemma \ref{le: solve homogeneous equation},
	multiplying \eqref{eq: F(t), VL(2)} by 
	\[
		\ptwotwo 1 {\frac 1 t} 0 1
	\]
	and then setting $s = \frac x y$ and $t = x+y$ yields	
	\begin{align*}
		\ptwotwo 1 {\frac 1 {x+y}} 0 1 \cdot F(x+y) = \, &
		\ptwotwo 1 {\frac 1 x} 0 1 \cdot F(x) +
		(-1)^\varepsilon \ptwotwo {\frac x y} {\frac 1 y} {x+y} 1 \cdot F(y) \\
		& {} + (-1)^\varepsilon \ptwotwo {\sqrt {\frac {x (x+y)} {y}}} {\sqrt {\frac {x} {y (x+y)}}} 0 {\sqrt {\frac {y} {x (x+y)}}} \cdot C
	\end{align*}
	for all $x, y > 0$.
	Define a function $G \colon (0,\infty) \to \Sym{\R^2}$ by
	\[
		G(x) = \ptwotwo 1 {\frac 1 x} 0 1 \cdot F(x) .
	\]
	By the symmetry relation \eqref{eq: F mirrored, VL(2)} we have
	\[
		(-1)^\varepsilon \ptwotwo {-1} 0 x 1 \cdot G(x) = G(x) .
	\]
	Thus, an elementary calculation yields 
	\[
		G(x+y) = G(x) + \ptwotwo 1 0 x 1 \cdot G(y) +
		(-1)^\varepsilon \ptwotwo {\sqrt {\frac {x (x+y)} {y}}} {\sqrt {\frac {x} {y (x+y)}}} 0 {\sqrt {\frac {y} {x (x+y)}}} \cdot C .
	\]
  Using \eqref{eq: tilt expanded symmetric transpose}, 
  it is not hard to determine the $p$-th component of the last term in this equation.
  Thus, the $p$-th component of $G$ satisfies
	\begin{equation}\label{eq: G_p(x+y)}
		G_p(x+y) = G_p(x) + G_p(y) + (-1)^\varepsilon (x+y)^{- \frac p 2} \sum_{j = 0}^p C_j \left( \frac x y \right)^{\frac p 2 - j} .
	\end{equation}  
	
	First, let $p = 0$. 
	For $\varepsilon = 0$, equation \eqref{eq: G_p(x+y)} simplifies to
	\[
		G_0(x+y) = G_0(x) + G_0(y) + C_0.
	\]
	This is an inhomogeneous version of Cauchy's functional equation.
	Therefore, the solution is given by $G_0(x) = k_1 x + k_2$ for some constant $k_1$ and $k_2 = -C_0$.
	Since $F = G_0$ for $p = 0$, the case $\varepsilon = 0$ is settled.
	If $\varepsilon = 1$, then equation \eqref{eq: F mirrored, VL(2)} directly implies $F = 0$.
	This concludes the proof of the scalar case $p = 0$.
	
	Second, let $p \geq 1$.
	For $x, y > 0$ define 
	\begin{equation}\label{eq: def h}
		h(x, y) = (-1)^\varepsilon (x+y)^{- \frac p 2} \sum_{j = 0}^p C_j \left( \frac x y \right)^{\frac p 2 - j} .
	\end{equation}
	With this definition, equation \eqref{eq: G_p(x+y)} simplifies to
	\[
	 G_p(x+y) = G_p(x) + G_p(y) + h(x, y).
	\]
	This clearly implies that $h$ is a symmetric function.
	By the last relation, we can calculate $G_p(x+y+1)$ in two different ways.
	On the one hand,
	\begin{align*}
		G_p(x+y+1)
		&= G_p(x) + G_p(y+1) + h(x, y+1) \\
		&= G_p(x) + G_p(y) + G_p(1) + h(x, y+1) + h(1, y)
	\end{align*}
	and on the other hand
	\begin{align*}
		G_p(x+y+1)
		&= G_p(x+1) + G_p(y) + h(x+1, y) \\
		&= G_p(x) + G_p(y) + G_p(1) + h(x+1, y) + h(x, 1) .
	\end{align*}
	Consequently, for all $x, y > 0$ we have
	\begin{equation}\label{eq: h(x, y+1)}
		h(x, y+1) + h(1, y) = h(x+1, y) + h(x, 1) .
	\end{equation} 
	
	Definition \eqref{eq: def h} clearly extends to all $x \in \C$ with $0 < |x| < |y|$.
	Let $y > 2$ and write $B_1^\times(0)$ for the punctured unit disc $\{ x \in \C \colon 0 < |x| < 1\}$. 
	The identity theorem for holomorphic functions and \eqref{eq: h(x, y+1)} imply that
	\begin{equation}\label{eq: h(x^2, y+1)}
		h(x^2, y+1) + h(1, y) = h(x^2+1, y) + h(x^2, 1) 
	\end{equation}
	holds for all $x \in B_1^\times(0)$.
	Therefore, we can use this equation to compare coefficients of the respective Laurent series.
	We consider the Laurent expansions at zero 
	and write, for example, $\left[ x^j \right] h(x^2,y)$ for the coefficient of $x^j$ in the Laurent expansion of $x \mapsto h(x^2, y)$.
	The functions $x \mapsto h(1, y)$ and $x \mapsto h(x^2+1, y)$ are holomorphic on $B_1(0)$. Hence,
	\[
		\left[ x^j \right] h(1, y) = \left[ x^j \right] h(x^2+1, y) = 0
	\]
	for $j < 0$.
	Moreover, we obviously have
	\[
		\left[ x^0 \right] h(1, y) = \left[ x^0 \right] h(x^2+1, y).
	\]
	So by \eqref{eq: h(x^2, y+1)} we deduce for all $j \leq 0$ that
	\begin{equation}\label{eq: [x^j]}
		\left[ x^j \right] h(x^2, y+1) = \left[ x^j \right] h(x^2, 1) .
	\end{equation}
	We need a series expansion of $x \mapsto h(x,y)$.
	Since $h$ is symmetric, we can also look at $x \mapsto h(y,x)$.
	For this map, using the Taylor expansion of $x \mapsto (x + y)^{-\frac p 2}$ at zero
	and the first relation of \eqref{eq: tensor C, VL(2)},
  we obtain by a rearrangement of the involved sums that
	\[
		h(x, y) = \sum_{i = 0}^\infty \sum_{j = 0}^{i \wedge p }\binom {-\frac p 2} {i-j} C_{p-j} x^{i-\frac p 2} y^{-i}
	\]	
	for all $x \in \R$ with $0 < x < y$. 
	Here, $i \wedge p$ denotes the minimum of $i$ and $p$.	
	This relation directly yields the Laurent expansion of $h(x^2, y)$ at zero. 
	Thus, \eqref{eq: [x^j]} for the coefficients of $x^{2i-p}$ gives
	\[
		\sum_{j = 0}^i \binom {- \frac p 2} {i-j} C_{p-j} (y+1)^{-i} =
		\sum_{j = 0}^i \binom {- \frac p 2} {i-j} C_{p-j}
	\]
	for $i \in \{ 0, \ldots, \left\lfloor \frac p 2 \right\rfloor \}$.
	Clearly, this can only hold if
	\begin{equation}\label{eq: sum binom C}
		\sum_{j = 0}^i \binom {- \frac p 2} {i-j} C_{p-j} = 0 
	\end{equation}
	for $i \in \{ 1, \ldots, \left\lfloor \frac p 2 \right\rfloor \}$.
	
	We will now prove that $C_p$ determines all other components of $C$.
	This is clear once we have shown that
	\begin{equation}\label{eq: C_l}
		(-1)^\varepsilon C_i = C_{p-i} = \binom {\frac p 2} i C_p
	\end{equation}
	for $i \in \{ 0, \ldots, \left\lfloor \frac p 2 \right\rfloor \}$.
	The first relation of \eqref{eq: tensor C, VL(2)} in coordinates is precisely the first equation.
	The second is shown by induction on $i$.
	Clearly, the desired equality holds for $i = 0$.
	So let $i \geq 1$ and assume that it holds for all $j < i$.
	Then the induction assumption and the Vandermonde identity \eqref{eq: Vandermonde} yield
	\begin{eqnarray*}
		\sum_{j = 0}^i \binom {- \frac p 2} {i-j} C_{p-j} &=& 
		C_p \sum_{j = 0}^{i-1} \binom {- \frac p 2} {i-j} \binom {\frac p 2} j + C_{p-i} \\
		&=& - \binom {\frac p 2} i C_p + C_{p-i}.
	\end{eqnarray*}
	Now \eqref{eq: sum binom C} implies \eqref{eq: C_l}.

	First, suppose that $p+\varepsilon$ is odd.
	Looking at the $0$-th coordinate of the second part of \eqref{eq: tensor C, VL(2)}, we see that
	\[
		C_0 = (-1)^{p+\varepsilon} C_0 .
	\]
	So $C_0 = 0$ and, by \eqref{eq: C_l}, also $C = 0$.
	From \eqref{eq: F(t), VL(2)}, \eqref{eq: F mirrored, VL(2)}, and Lemma \ref{le: solve homogeneous equation} 
	we therefore conclude that $F$ has the desired form.
	
	Second, let $p$ as well as $\frac p 2$ be even and suppose that $\varepsilon = 0$.
	From the second part of \eqref{eq: tensor C, VL(2)}, equation \eqref{eq: tilt expanded symmetric transpose},
	an index change, relation \eqref{eq: C_l}, and the binomial theorem we get  
	\begin{align*}
		C_p &=\left[ \ptwotwo {-1} {-1} 0 1 C \right]_p 
		=	\sum_{j = 0}^p (-1)^{p-j} C_j 
		= \sum_{j = 0}^{\frac p 2} (-1)^j C_j + \sum_{j = 0}^{\frac p 2 - 1} (-1)^j C_{p-j} \\
		&= 2C_p\sum_{j = 0}^{\frac p 2} \binom {\frac p 2} j (-1)^j - (-1)^{\frac p 2} C_p  
    = - C_p.
	\end{align*}
	Thus, $C_p$ is equal to zero.
	As before, this implies $C = 0$ and that $F$ has the desired form.
	
	Third, assume that $p$ is even, $\frac p 2$ is odd and $\varepsilon = 0$.
	Using \eqref{eq: tilt expanded symmetric transpose} and \eqref{eq: C_l}, 
	it is not hard to see that the constant function 
	\[
		- C_p \, \eMiddle
	\]
	is a solution for \eqref{eq: F(t), VL(2)} and \eqref{eq: F mirrored, VL(2)}.
	Thus, the sum
	\[
		\tilde F(x) = F(x) + C_p \, \eMiddle
	\]
	satisfies \eqref{eq: F(t)} and \eqref{eq: F mirrored}. 
	Lemma \ref{le: solve homogeneous equation} again shows that $\tilde F$ has the desired form.
	Moreover, a glance at \eqref{eq: F describes mu in coordinates} reveals that, for this combination 
	of $p$, $\frac p 2$, and $\varepsilon$, 
	we can add a constant term to the $\frac p 2$-th coordinate without changing the fact that $F$ describes $\mu$.
	Thus, $\tilde F$ also describes $\mu$.
	
  Finally, we have to show that we can choose $K_{p-1} = 0$.
	For $\varepsilon = 0$ we already know $K_{p-1} = 0$ from Lemma \ref{le: solve homogeneous equation}.
	So let $p$ be even and $\varepsilon = 1$ and set $K = e_1 \odot e_2^{\odot p-1}$.
	If we plug this into \eqref{eq: F sol K}, then we get
	\[
		F(x) = \diag x 1 \cdot \int_0^x \ptwotwo {\frac 1 x} {-\frac 1 x} {\frac z x} {1-\frac z x} \cdot e_1 \odot e_2^{\odot p-1} \ dz.
	\]
	The substitution $z/x = u$ and the definition of the action $\cdot$ show
	\[
		F(x) = \diag x 1 \cdot \int_0^1 \ptwo 1 {-1} \odot \ptwo u {1-u}^{\odot p-1} \ du .
	\]
	An application of Lemma \ref{le: M^{j,p-j} help} with $b = 1$, $c = -1$ and $i = p-1$ proves
	\[
		F(x) = \frac {x^p} p e_1^{\odot p} - \frac 1 p e_2^{\odot p} .
	\]
	Plugging this into \eqref{eq: mu describable by a function F},
	we see that the right hand side is always equal to $0$.
	This completes the proof.
\end{proof}

Finally, our main result in the planar case can be deduced by a counting argument.

\begin{proof}[Proof of Theorem \ref{th: 2-dim, SL(2)-covariant}]
	Suppose that $p \geq 1$.
	We denote by $\ValCoDescr{p}{2}{\varepsilon}$ the vector space of valuations 
	from $\ValCoEps{p}{2}$ which can be described by some function $F$.
	Furthermore, define two subspaces of $\Sym{\R^2}$ by
  \[
  	V_{\varepsilon} = \left \{ K \in \Sym{\R^2} \colon 	K_{p-1} = 0 \text{ and } (-1)^\varepsilon \diag {-1} 1 \cdot K = K \right \}.
  \]
	Unless $p$ is odd and $\varepsilon = 1$, Lemma \ref{le: solve equation, VL(2)}, the $\SL[2]$-$\varepsilon$-covariance of $\mu$,
	and Theorem \ref{th: determined by values on SL(n)(R^n)} show the existence of an injective linear map
	from $\ValCoDescr{p}{2}{\varepsilon}$ to $V_\varepsilon$.
	An immediate consequence is that the inequality
	\begin{equation}\label{eq: dimensions}
		\dim \ValCoDescr{p}{2}{\varepsilon} \leq \dim V_{\varepsilon}
	\end{equation}
	holds unless $p$ is odd and $\varepsilon = 1$.

	First, let $p$ be even.
	By Lemma \ref{le: mu of double pyramid, VL(2)-covariant}, the $\SL[2]$-$\varepsilon$-covariance of $\mu$,
	and Theorem \ref{th: determined by values on SL(n)(R^n)} we have
	\[
		\dim \ValCoZero{p}{2} \leq \dim \ValCoDescr{p}{2}{0} + 1
	\]
	and
	\[
		\dim \ValCoOne{p}{2} = \dim \ValCoDescr{p}{2}{1} .
	\]
	Using \eqref{eq: Val direct sum}, the two relations above, and \eqref{eq: dimensions}, we conclude
	\begin{equation}\label{eq: dim formula}
		\dim \ValCo{p}{2} \leq \dim V_0 + \dim V_1 + 1 .
	\end{equation}
	In coordinates, the second condition on $K$ in the definition of $V_{\varepsilon}$ reads as 
	$(-1)^{\varepsilon + p + i}K_i = K_i$, $i \in \{ 0, \ldots, p \}$. 
	Therefore, the dimensions of $V_{\varepsilon}$ satisfy
	\[
		\dim V_0 = \frac p 2 + 1\qquad \text{and} \qquad \dim V_1 = \frac p 2 - 1 .
	\]
	Now, \eqref{eq: dim formula} implies that
	$\ValCo{p}{2}$ is at most $(p+1)$-dimensional.
	
	Second, let $p$ be odd. 
	Assume that $\varepsilon = 0$.
	In this case we have 
	\[
		\dim V_0 = \frac {p-1}{2} + 1.
	\]
	From \eqref{eq: dimensions} we deduce that
	\[
	  \dim \ValCoDescr{p}{2}{0} \leq \frac {p-1}{2} + 1.
	\]
	Note that for this combination of $p$ and $\varepsilon $, the constant $k$ in Lemma \ref{le: solve equation, VL(2)} vanishes. 
	Hence, $\ValCoZero{p}{2} = \ValCoDescr{p}{2}{0}$, which in turn gives
	\begin{equation}\label{eq: dim p odd}
		\dim \ValCoZero{p}{2} \leq \frac {p-1}{2} + 1.
	\end{equation}
	Consider the map $\mathcal R \colon \ValCoZero{p}{2} \to \ValCoOne{p}{2}$ defined by
	\[
		\mathcal R(\mu)(P) = \rho \cdot \mu (P^*) , \quad P \in \cP^2_o ,
	\]
	where as before, $\rho$ denotes the counter-clockwise rotation about an angle of $\frac \pi 2$.
	Since	$\mathcal R \circ \mathcal R = - \Id$, the map $\mathcal R$ is an isomorphism.
  Consequently, the spaces $\ValCoZero{p}{2}$ and $\ValCoOne{p}{2}$ have the same dimension.
  From \eqref{eq: dim p odd} and \eqref{eq: Val direct sum} we infer that also in this case
  $\ValCo{p}{2}$ is at most $(p+1)$-dimensional.
	
	Since the $p+1$ valuations from the statement of the theorem are linearly independent and have the desired properties,
	the proof is completed.
	
	For $p = 0$ we can argue analogously.
	We get
	\[
		\dim \ValCoZero{0}{2} = 3 \quad \text{and} \quad \dim \ValCoOne{0}{2} = 0 .
	\]
\end{proof}

We conclude this section with the dual result of Theorem \ref{th: 2-dim, SL(2)-covariant}.
A map $\mu \colon \cP_o^n \to \Sym{\R^n}$ is said to be $\SL$-contravariant if
\[
	\mu(\phi P) = \phi^{-t} \cdot \mu(P)
\]
for all $P \in \cP_o^n$ and each $\phi \in \SL$.
The vector space of all measurable $\SL$-contravariant valuations will be denoted by
$\ValCon{p}{n}$.

\begin{theorem}\label{th: 2-dim, SL(2)-contravariant}
	For $n = 2$ the following holds.
	\begin{itemize}
		\item
		A basis of $\ValCon{0}{2}$ is given by $\chi$, $V$ and $V \circ \, ^*$.

		\item
    For $p \geq 1$, a basis of $\ValCon{p}{2}$ is given by $\rho \cdot M^{i, p-i}_\rho$ for
    $i \in \{ 0, \ldots, p \} \setminus \{ p-1 \}$ and $M^{p, 0} \circ \, ^*$.
	\end{itemize}
\end{theorem}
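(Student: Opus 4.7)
The plan is to derive Theorem \ref{th: 2-dim, SL(2)-contravariant} as a direct corollary of Theorem \ref{th: 2-dim, SL(2)-covariant} via a ``twist by $\rho$'' that interchanges $\SL[2]$-covariance and $\SL[2]$-contravariance. The whole argument rests on one small matrix observation available only in dimension two: for every $\phi \in \SL[2]$ one has the identity
\[
	\rho \phi \rho^{-1} = \phi^{-t} ,
\]
which is verified by a direct $2 \times 2$ computation and uses $\det \phi = 1$. This identity is what makes the planar case genuinely degenerate compared to $n \geq 3$.

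Using this, I would define the linear map $\Phi \colon \ValCo{p}{2} \to \ValCon{p}{2}$ by $\Phi(\mu)(P) := \rho \cdot \mu(P)$. Measurability and the valuation property are clearly preserved since $\rho \cdot$ is a linear automorphism of $\Sym{\R^2}$. To check contravariance, one computes for $\phi \in \SL[2]$ and $P \in \cP^2_o$,
\[
	\Phi(\mu)(\phi P) = \rho \cdot ( \phi \cdot \mu(P) ) = ( \rho \phi \rho^{-1} ) \cdot ( \rho \cdot \mu(P) ) = \phi^{-t} \cdot \Phi(\mu)(P) ,
\]
so $\Phi(\mu) \in \ValCon{p}{2}$. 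The symmetric computation starting from a contravariant $\nu$ shows that $\nu \mapsto \rho^{-1} \cdot \nu$ is a two-sided inverse of $\Phi$, hence $\Phi$ is a linear isomorphism.

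Now I would simply push the basis of $\ValCo{p}{2}$ furnished by Theorem \ref{th: 2-dim, SL(2)-covariant} through $\Phi$. For $p = 0$ the action of $\rho$ on $\Sym[0]{\R^2} = \R$ is trivial, so $\Phi$ is the identity and the basis $\chi, V, V \circ \, ^*$ carries over verbatim, yielding the first item. For $p \geq 1$, Theorem \ref{th: 2-dim, SL(2)-covariant} provides the basis $\{ M_\rho^{i, p-i} : i \in \{0, \ldots, p\} \setminus \{p-1\} \} \cup \{ \rho \cdot M^{p, 0} \circ \, ^* \}$, which $\Phi$ sends to
\[
	\{ \rho \cdot M_\rho^{i, p-i} : i \in \{0, \ldots, p\} \setminus \{p-1\} \} \cup \{ \rho^2 \cdot M^{p, 0} \circ \, ^* \} .
\]
Since $\rho^2 = -\Id$ acts on rank-$p$ tensors as multiplication by $(-1)^p$, the last element is a non-zero scalar multiple of $M^{p, 0} \circ \, ^*$ and may be replaced by $M^{p, 0} \circ \, ^*$ itself. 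This yields the basis claimed in the second item.

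There is essentially no hard step in this argument; the only thing one has to ``spot'' is the identity $\rho \phi \rho^{-1} = \phi^{-t}$ for $\phi \in \SL[2]$. The mild bookkeeping to absorb the scalar $(-1)^p$ coming from $\rho^2$ is routine. Note also that the argument is intrinsically planar: for $n \geq 3$ there is no element of $\SL$ that conjugates every $\phi \in \SL$ into $\phi^{-t}$, which is why the contravariant classification in higher dimensions requires a separate treatment rather than a short corollary of the covariant one.
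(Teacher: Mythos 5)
Your proposal is correct and is essentially the same as the paper's proof: the paper constructs the isomorphism $\mathcal S \colon \ValCon{p}{2} \to \ValCo{p}{2}$, $\mathcal S \mu = \rho \cdot \mu$, and invokes Theorem \ref{th: 2-dim, SL(2)-covariant}, while you construct the inverse map (together with spelling out the key conjugation identity $\rho \phi \rho^{-1} = \phi^{-t}$ for $\phi \in \SL[2]$ and the harmless scalar $\rho^2 = -\Id$), which is the same argument.
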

\begin{proof}
	The map $\mathcal S \colon \ValCon{p}{2} \to \ValCo{p}{2}$ defined by
	\[
		\mathcal S \mu = \rho \cdot \mu
	\]
	is an isomorphism. The result now follows directly from Theorem \ref{th: 2-dim, SL(2)-covariant}.
\end{proof}

\subsection{The $n$-dimensional case}

In this section we will prove Theorem \ref{th: n-dim, SL(n)-covariant} by induction
over the dimension.
During the induction step, we will encounter a tensor valuation that might not be symmetric.
This makes it necessary to establish some results for non-symmetric tensor valuations first.
Note that the definition of $\SL$-contravariance given at the end of the previous section
extends in an obvious way to maps $\mu \colon \cP_o^n \to \Ten{\R^n}$.
Also recall that by our notation convention we set $J = [-c e_n, d e_n]$.

\begin{lemma}\label{le: f^B, SL(n)-contravariant}
	Let $n \geq 2$ and $\mu \colon \cP^n_o \to \Ten{\R^n}$ be a measurable $\SL$-contravariant valuation.
	If $\mu$ satisfies
	\begin{equation}\label{eq: mu[B, J]=0}
		\mu[B, J] = 0
	\end{equation}
	for all $B \in \cP^{n-1}_o$ and $c, d > 0$,
	then there exists a family of measurable functions $F^B \colon (\R^{n-1})^2 \to \Ten{\R^n}$ with
	\begin{equation*}
		F^B(x, y) = \mu \left[ B, -c \ptwo x 1, d \ptwo y 1 \right]
	\end{equation*}
	for all $B$, $c, d$ and $x, y \in \R^{n-1}$ which form a double pyramid.
	Furthermore, each $F^B$ satisfies
	\begin{equation}\label{eq: f^B(x, y), SL(n)-contravariant}
		F^B(x, y) = \ptwotwo {\Id} {-x^t} 0 1 \cdot F^B(0, y-x)
	\end{equation}
	and
	\begin{equation}\label{eq: f^B(0, x) linear, SL(n)-contravariant}
		F^B(0, x+y) = F^B(0, x) + \ptwotwo {\Id} {-x^t} 0 1 \cdot F^B(0, y) 
	\end{equation}
	for all $x, y \in \R^{n-1}$.
\end{lemma}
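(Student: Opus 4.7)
The plan is to mirror the proof of Lemma \ref{le: existence F^I}, substituting $\SL$-contravariance for $\SL[2]$-covariance and working in dimension $n$; the key simplification compared to the two-dimensional case is the hypothesis $\mu[B,J]=0$, which removes the need for any preliminary splitting and lets $F^B$ be $\mu$ evaluated on the double pyramid itself.

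First I would show well-definedness. Fix $B\in\cP_o^{n-1}$ and $x,y\in\R^{n-1}$, and choose $c,d>0$ yielding a double pyramid. For sufficiently small $r>0$ the valuation property gives
\[
\mu\!\left[B,-c\ptwo x 1,d\ptwo y 1\right]+\mu\!\left[B,-r\ptwo y 1,r\ptwo y 1\right]=\mu\!\left[B,-c\ptwo x 1,r\ptwo y 1\right]+\mu\!\left[B,-r\ptwo y 1,d\ptwo y 1\right].
\]
The pyramids $[B,-r\ptwo y 1,r\ptwo y 1]$ and $[B,-r\ptwo y 1,d\ptwo y 1]$ are the images of $[B,-re_n,re_n]$ and $[B,-re_n,de_n]$ under the unimodular shear $\ptwotwo{\Id}{0}{y}{1}$, so $\SL$-contravariance together with $\mu[B,J]=0$ forces both of them to vanish. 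Thus the expression on the left is independent of $d$, and an analogous identity gives independence of $c$. Consequently $F^B(x,y):=\mu[B,-c\ptwo x 1,d\ptwo y 1]$ is unambiguously defined on all of $(\R^{n-1})^2$, with measurability inherited from $\mu$ via the Hausdorff continuity of the parametrisation.

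Next, for \eqref{eq: f^B(x, y), SL(n)-contravariant}, the unimodular shear $\phi=\ptwotwo{\Id}{0}{x}{1}$ fixes $B\subseteq e_n^\perp$, sends $-ce_n$ to $-c\ptwo x 1$, and sends $d\ptwo{y-x}{1}$ to $d\ptwo y 1$. Since $\phi^{-t}=\ptwotwo{\Id}{-x^t}{0}{1}$, $\SL$-contravariance immediately yields $F^B(x,y)=\ptwotwo{\Id}{-x^t}{0}{1}\cdot F^B(0,y-x)$. For \eqref{eq: f^B(0, x) linear, SL(n)-contravariant} I would apply the valuation property once more, this time with $[B,-re_n,re_n]$ as the auxiliary pyramid; its $\mu$-value vanishes by hypothesis, leaving $F^B(x,y)=F^B(x,0)+F^B(0,y)$. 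Substituting \eqref{eq: f^B(x, y), SL(n)-contravariant} into both terms on the right, then replacing $x$ by $-x$ and multiplying by the inverse shear, produces the desired additivity. The only real obstacle is checking that for small enough $c,d,r$ all polytope unions and intersections occurring in the valuation identities actually lie in $\cP_o^n$ and coincide with the geometric pieces claimed; this follows directly from the double pyramid condition on the arguments.
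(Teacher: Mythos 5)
Your proposal is correct and follows essentially the same argument as the paper: the same two applications of the valuation identity (with auxiliary double pyramids $[B,-r\ptwo y 1, r\ptwo y 1]$ and $[B,-re_n,re_n]$ whose $\mu$-values vanish by the hypothesis combined with $\SL$-contravariance), the same shear giving \eqref{eq: f^B(x, y), SL(n)-contravariant}, and the same substitution-then-replace-$x$-by-$-x$ step for \eqref{eq: f^B(0, x) linear, SL(n)-contravariant}. The only cosmetic difference is your explicit mention of measurability of $F^B$, which the paper's proof leaves implicit.
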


\begin{proof}
	The arguments which will be used are similar to the ones in the proof of Lemma \ref{le: existence F^I}.
	Let $B \in \cP_o^{n-1}$ and $x, y \in \R^{n-1}$ be given. 
	Choose $c, d > 0$ such that $B, c, d, x, y$ form a double pyramid.
	For sufficiently small $r > 0$ the valuation property implies
	\begin{multline*}
		\mu \left[ B, -c \ptwo x 1, d \ptwo y 1 \right] + \mu \left[ B, -r \ptwo y 1, r \ptwo y 1 \right] = \\
		\mu \left[ B, -c \ptwo x 1, r \ptwo y 1 \right] + \mu \left[ B, -r \ptwo y 1, d \ptwo y 1 \right] .
	\end{multline*}
	By assumption \eqref{eq: mu[B, J]=0} and the $\SL[2]$-contravariance of $\mu$ this simplifies to
	\[
		\mu \left[ B, -c \ptwo x 1, d \ptwo y 1 \right] = \mu \left[ B, -c \ptwo x 1, r \ptwo y 1 \right] .
	\]
	Hence, the expression
	\[
		\mu \left[ B, -c \ptwo x 1, d \ptwo y 1 \right]
	\]
	is independent of $d$.
	By analogous arguments we see that it is also independent of $c$.
	Therefore, the family $F^B$ is well defined.
	
	The $\SL$-contravariance of $\mu$ implies \eqref{eq: f^B(x, y), SL(n)-contravariant}.
	So it remains to prove \eqref{eq: f^B(0, x) linear, SL(n)-contravariant}.
	The valuation property of $\mu$ implies for sufficiently small $r > 0 $ that
	\[
		\mu \left[ B, -c \ptwo x 1, d \ptwo y 1 \right] + \mu \left[ B, -r e_n, r e_n \right] =
		\mu \left[ B, -c \ptwo x 1, r e_n \right] + \mu \left[ B, -r e_n, d \ptwo y 1 \right] .
	\]	
	By \eqref{eq: mu[B, J]=0} and the definition of $F^B$ we therefore obtain
	\[
		F^B(x, y) = F^B(x, 0) + F^B(0, y) .
	\]
	Combining this with \eqref{eq: f^B(x, y), SL(n)-contravariant} gives
	\[
		\ptwotwo {\Id} {-x^t} 0 1 \cdot F^B(0, y-x) = \ptwotwo {\Id} {-x^t} 0 1 \cdot F^B(0, -x) + F^B(0, y) .
	\]
	Replace $x$ by $-x$ in this equation.
	Then a matrix multiplication proves \eqref{eq: f^B(0, x) linear, SL(n)-contravariant}.
\end{proof}

Our next result deals with valuations which are not only compatible with the special linear group, but with $\PL$, i.e.\ 
linear maps with positive determinant.
We say that a map $\mu \colon \cP_o^n \to \Ten{\R^n}$ is $\PL$-contravariant
if there exists a $q \in \R$ such that
\[
	\mu(\phi P) = (\det \phi)^q \phi^{-t} \cdot \mu(P)
\]
for all $P \in \cP_o^n$ and each $\phi \in \PL$.
Clearly, every $\PL$-contravariant map is also $\SL$-contravariant.

\begin{theorem}\label{th: vanishes on crosspolytopes implies everywhere, GL(2)}
	Let $\mu \colon \cP^2_o \to \Ten{\R^2}$ be a measurable $\PL[2]$-contravariant valuation.
	If 
	\[
		\mu[I, J] = 0
	\]
	for all $a, b, c, d > 0$, then $\mu$ vanishes everywhere.
\end{theorem}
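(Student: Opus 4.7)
The plan is to reduce via Lemma \ref{le: f^B, SL(n)-contravariant} to the vanishing of a family of tensors $K^I \in \Ten{\R^2}$ indexed by $I \in \cP_o^1$, and then to force $K^I = 0$ by combining $\SL[2]$-contravariance with the scalar homogeneity afforded by $\PL[2]$-contravariance. Once $K^I \equiv 0$ is established, $\mu$ vanishes on all of $\cR^2$, and Theorem \ref{th: determined by values on SL(n)(R^n)} together with $\SL[2]$-contravariance yields $\mu \equiv 0$ on $\cP_o^2$.

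First, the hypothesis $\mu[I, J] = 0$ activates Lemma \ref{le: f^B, SL(n)-contravariant} with $B = I$, producing measurable $F^I \colon \R^2 \to \Ten{\R^2}$ satisfying the sheared additivity \eqref{eq: f^B(0, x) linear, SL(n)-contravariant}. Lemma \ref{le: tensor Cauchy, contravariant} then supplies tensors $K^I \in \Ten{\R^2}$ with $F^I(0, x) = \int_0^x \ptwotwo 1 {-z} 0 1 \cdot K^I \, dz$; combining this with \eqref{eq: f^B(x, y), SL(n)-contravariant} and \eqref{eq: integral compatible SL(n)} upgrades to
\[
\mu\left[I, -c \ptwo x 1, d \ptwo y 1 \right] = \int_x^y \ptwotwo 1 {-z} 0 1 \cdot K^I \, dz,
\]
so it suffices to prove $K^I = 0$ for every $I$.

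Applying $\SL[2]$-contravariance to the diagonal $\phi = \diag{\lambda}{1/\lambda}$ (which maps a double pyramid to a double pyramid) and comparing integrands via the injectivity property \eqref{eq: integral tilt kernel transpose} yields $K^{\lambda I} = \lambda^{-2} \diag{1/\lambda}{\lambda} \cdot K^I$. The genuine $\PL[2]$-action $\phi = \lambda \Id$ adds the scalar homogeneity $K^{\lambda I} = \lambda^{2q - p} K^I$. Combining both relations and decomposing $K^I$ in the standard basis of $\Ten{\R^2}$ gives, for every multi-index $\beta$ and every $\lambda > 0$, the pointwise identity $\lambda^{2|\beta| - p} K^I_\beta = \lambda^{2q - p + 2} K^I_\beta$, where $|\beta|$ counts the entries of $\beta$ equal to $2$; hence $K^I_\beta = 0$ unless $|\beta| = q + 1$. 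If $q + 1 \notin \{0, 1, \ldots, p\}$ this already forces $K^I = 0$.

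The main obstacle is the residual case $q + 1 \in \{0, \ldots, p\}$, in which $K^I$ has a constrained but possibly nonzero support. To eliminate the survivors I plan to split $\mu$ into its symmetric and complementary components with respect to the natural action of the symmetric group on $\Ten{\R^2}$; both components are $\PL[2]$-contravariant measurable valuations vanishing on crosspolytopes. For the $\Sym{\R^2}$-valued part, Theorem \ref{th: 2-dim, SL(2)-contravariant} writes it in the explicit basis, and Lemma \ref{le: M on crosspolytopes} shows that the $0$-th coordinates of the basis members on crosspolytopes involve pairwise distinct monomials in $(a, b, c, d)$, so the vanishing hypothesis forces every coefficient to be zero. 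For the complementary part, the support condition $|\beta| = q + 1$ together with the one-dimensional classifications of Theorems \ref{th: dim 1 even homogeneous} and \ref{th: dim 1 odd homogeneous}, applied to the scalar valuations $I \mapsto K^I_\beta$ (which are homogeneous of a prescribed degree in the endpoints of $I$), reduces matters to finitely many real parameters; these are then killed by testing the $\PL[2]$-covariance on a suitable dissection of a non-straight double pyramid. Once $K^I = 0$ for every $I$, Theorem \ref{th: determined by values on SL(n)(R^n)} completes the proof.
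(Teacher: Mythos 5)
There is a genuine gap. The reduction to the tensors $K^I$ via Lemma~\ref{le: f^B, SL(n)-contravariant} and Lemma~\ref{le: tensor Cauchy, contravariant} is correct and matches the paper, and your observation that the $\diag{\lambda}{1/\lambda}$ action, combined with the $\lambda\Id$ scaling, forces $K^I_\beta = 0$ unless $|\beta| = q+1$ is a nice extra constraint that the paper does not exploit. Your handling of the symmetric part via Theorem~\ref{th: 2-dim, SL(2)-contravariant} and Lemma~\ref{le: M on crosspolytopes} is also fine (this is essentially the $n=2$ step of Lemma~\ref{le: vanishes on crosspolytopes implies everywhere, SL(n)}).

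The problem is the residual case $q+1 \in \{0,\dots,p\}$ on the non-symmetric complement. You write that the remaining parameters ``are then killed by testing the $\PL[2]$-covariance on a suitable dissection of a non-straight double pyramid,'' but this one phrase is precisely where the entire content of the paper's proof lives. The paper introduces the specific triangle $T$ with corners $se_1 + e_2$, $-ue_1$, $-e_2$, writes it as an $\SL[2]$-image of a double pyramid in two independent ways, equates the resulting integral representations (equation~\eqref{eq: int K}), extracts polynomial identities for $P_\alpha(s)$ and $Q_\alpha(s)$, and then runs a four-way case analysis on the sign and parity of $2q-p$ (with a genuinely separate argument for $2q-p=0$). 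None of this is automatic, nothing in your upfront reductions shortcuts it, and the split into symmetric and complementary components does not help here because there is no classification in the paper for $\ker(\mathrm{Sym})$-valued valuations to lean on: for the complementary part you are back to first principles. As written, the proposal defers the decisive step rather than proving it.
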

\begin{proof}
	By Theorem \ref{th: determined by values on SL(n)(R^n)} and the $\SL$-contravariance of $\mu$
	it is enough to show that $\mu$ vanishes on double pyramids.
	So let $a, b, c, d > 0$ and $x, y \in \R$ form a double pyramid.
	With the aid of \eqref{eq: integral compatible SL(n)}, we obtain from Lemmas 
	\ref{le: f^B, SL(n)-contravariant} and \ref{le: tensor Cauchy, contravariant} that
	\begin{equation}\label{eq: mu = int K contra}
		\mu \left[ I, -c \ptwo x 1, d \ptwo y 1 \right] = \int_x^y \ptwotwo 1 {-z} 0 1 \cdot K^I \ dz
	\end{equation}
	for some tensor $K^I \in \Ten{\R^2}$.
	We have to show that $K^I = 0$.

	Since $\mu$ is $\PL[2]$-contravariant, there exists a $q \in \R$ with
	\[
		\mu \left[ r I, -c \ptwo x 1, d \ptwo y 1 \right] =
		r^{2q-p} \mu \left[ I, - \frac c r \ptwo x 1, \frac d r \ptwo y 1 \right] =
		r^{2q-p} \int_x^y \ptwotwo 1 {-z} 0 1 \cdot K^I \ dz
	\]
	for all $r > 0$.
	Consequently, 
	\[
		\int_x^y \ptwotwo 1 {-z} 0 1 \cdot K^{rI} \ dz
		= r^{2q-p} \int_x^y \ptwotwo 1 {-z} 0 1 \cdot K^I \ dz .
	\]
	From \eqref{eq: integral tilt kernel transpose} we deduce that $I \mapsto K^I_\alpha$ is $(2q-p)$-homogeneous
	for all $\alpha \in \{ 1, 2 \}^p$.
	Similarly, 
	\[
		\mu \left[ - I, -c \ptwo x 1, d \ptwo y 1 \right] =
		(-1)^p \mu \left[ I, - d \ptwo y 1, c \ptwo x 1 \right] =
		(-1)^{p+1} \int_x^y \ptwotwo 1 {-z} 0 1 \cdot K^I \ dz .
	\]
	As before, we conclude that $I \mapsto K^I_\alpha$ is either even or odd.
	Since $\mu$ is a valuation, so is $I \mapsto K^I_\alpha$.
	In fact, this follows from \eqref{eq: mu = int K contra} and \eqref{eq: integral tilt kernel transpose}.
	By Theorems \ref{th: dim 1 even homogeneous} and \ref{th: dim 1 odd homogeneous} we therefore have
	\[
		K^I_\alpha = 
		\begin{cases}
			k_\alpha \left[ a^{2q-p} + (-1)^{p+1} b^{2q-p} \right] & \text{for $2q-p \neq 0$} , \\
			k_\alpha \left[ \ln(a) - \ln(b) \right] & \text{for $2q-p = 0$ and $p$ even} , \\
			k_\alpha & \text{for $2q-p = 0$ and $p$ odd},
		\end{cases}
	\]
	where $k_\alpha \in \R$ is some constant.
	Define a tensor $K \in \Ten{\R^2}$ componentwise by $K_{\alpha} = k_{\alpha}$.
	It remains to prove that $K$ vanishes.

	Let $s, u > 0$.
	Consider the triangle $T$ with corners at $s e_1 + e_2$, $-u e_1$ and $-e_2$.
	A simple calculation shows that $T$ can be written in two different ways, namely
	\[
		T = \left[ -u e_1, \frac s 2 e_1, -e_2, \ptwo s 1 \right]
	\]
	and
	\[
		T = \ptwotwo 0 1 {-1} 0 \left[ -e_1, \frac u {s+u} e_1, -s \ptwo {- \frac 1 s} 1, u e_2 \right] .
	\]
	By the $\SL[2]$-contravariance of $\mu$ and \eqref{eq: mu = int K contra} we get
	\begin{equation}\label{eq: int K}
		\int_0^s \ptwotwo 1 {-z} 0 1 \cdot K^{\left[ -u e_1, \frac s 2 e_1 \right]} \ dz =
		\ptwotwo 0 1 {-1} 0 \cdot \int_{- \frac 1 s}^0 \ptwotwo 1 {-z} 0 1 \cdot K^{\left[ -e_1, \frac u {s+u} e_1 \right]} \ dz .
	\end{equation}
	For the tensor $K$ from above, define two tensor polynomials by
	\[
		P(s) = \int_0^s \ptwotwo 1 {-z} 0 1 \cdot K \ dz \quad \text{and} \quad
		Q(s) = \ptwotwo 0 1 {-1} 0 \cdot \int_{-s}^0 \ptwotwo 1 {-z} 0 1 \cdot K \ dz .
	\]
	Recall that we have to prove $K = 0$.
	So by \eqref{eq: integral tilt kernel transpose} it is enough to show that either $P(s)$ or $Q(s)$ vanishes.
	
	Let $\alpha \in \{0, 1\}^p$. 
	It suffices to show $P_{\alpha}(s) = 0$ or $Q_{\alpha}(\frac 1 s) = 0$ for all positive $s$.
	First, assume $2q-p \neq 0$. 
	By the representation of $K^I$, the $\alpha$-component of equation \eqref{eq: int K} simplifies to
	\[
		\left( u^{2q-p} + (-1)^{p+1} \left( \frac s 2 \right)^{2q-p} \right) P_{\alpha}(s) =
		\left( 1 + (-1)^{p+1} \left( \frac u {s+u} \right)^{2q-p} \right) Q_{\alpha}(\tfrac 1 s) .
	\]	
	If $2q-p > 0$, let $u$ tend to infinity. 
	Note that the limit of the right hand side exists and is finite. 
	Thus, $P_{\alpha}(s) = 0$.
	Next, suppose that $2q-p < 0$. The last equation is clearly equivalent to
	\begin{equation}\label{eq: 2q-p < 0 P Q} 
		u^{2q-p}\left( P_{\alpha}(s) + (-1)^p (s+u)^{p-2q} Q_{\alpha}(\tfrac 1 s) \right) = 
		 (-1)^p \left( \frac s 2 \right)^{2q-p} P_{\alpha}(s) + Q_{\alpha}(\tfrac 1 s) .
	\end{equation}
	Let $u$ tend to $0$. Since the right hand side is constant in $u$, we must have
	\[
		Q_{\alpha}(\tfrac 1 s) = (-1)^{p+1} s^{2q-p} P_{\alpha}(s) .
	\]	
	If we set $u = st$ in \eqref{eq: 2q-p < 0 P Q} and use the last relation we obtain
	\[
		P_{\alpha}(s) \left( t^{2q-p} - (t+1)^{p-2q} + (-1)^p ( 1 - 2^{p-2q} ) \right) = 0
	\]
	for all $t > 0$. 
	Obviously, this implies $P_{\alpha}(s) = 0$.
	
	Second, let $2q-p = 0$ and $p$ be even.
	In this case, the $\alpha$-component of \eqref{eq: int K} is
	\[
		\Big( \ln(u) - \ln (\tfrac s 2 ) \Big) P_{\alpha}(s) = 
		-\ln\left( \frac {u} {s+u} \right)Q_{\alpha}(\tfrac 1 s) .
	\]
	The limit $u \to \infty$ implies that $P_{\alpha}(s) = 0$.
	
  Finally, let $2q-p = 0$ and $p$ be odd.	
  Then we have
  \[
		P_{\alpha}(s) = Q_{\alpha}(\tfrac 1 s) .
	\]
	The left hand side is a polynomial in $s$ without constant term and
	the right hand side is a polynomial in $\frac 1 s$ without constant term.
	Clearly, both polynomials have to be zero.
\end{proof}

Inductively, we will now extend this result to arbitrary dimensions.
Let us collect some notation before stating the next theorem.
In our context, an $n$-dimensional cross polytope is the convex hull of $n$ line segments
\[
	[-a_i e_i, b_i e_i], \qquad i = 1, \ldots, n,
\]
where, for all $i$,  the numbers $a_i$ and $b_i$ are positive.

Let $\alpha \in \{ 0, 1 \}^p$. We define a subspace $U_\alpha$ of $\Ten{\R^n}$ by
\[
	U_{\alpha} = \spn \left \{ x_1 \otimes \cdots \otimes x_p: 
	\text{$x_i = e_n$ if $\alpha_i = 1$, and $x_i \in \R^{n-1} \times \{0\}$ if $\alpha_i = 0$ } \right \} .
\]
In other words, the multiindex $\alpha$ indicates the positions of $e_n$ in a tensor product.
Note that
\[
	\Ten{\R^n} = \bigoplus_{\alpha \in \{ 0, 1 \}^p} U_\alpha .
\]
For $C \in \Ten{\R^n}$ we denote by $C_\alpha$ the projection of $C$ onto $U_\alpha$.
If $i$ is the total number of indices in $\alpha$ which are equal to one,
then $C_\alpha$ will be viewed as an element of $\Ten[p-i]{\R^{n-1}}$.

\begin{theorem}\label{th: vanishes on crosspolytopes implies everywhere, GL(n)}
	Let $n \geq 2$ and $\mu \colon \cP^n_o \to \Ten{\R^n}$ be a measurable $\PL$-contravariant valuation.
	If $\mu$ vanishes on crosspolytopes, then it vanishes everywhere.
\end{theorem}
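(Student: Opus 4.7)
The plan is to argue by induction on $n$, the base case $n = 2$ being Theorem~\ref{th: vanishes on crosspolytopes implies everywhere, GL(2)}. Fix $n \geq 3$, assume the statement in dimension $n - 1$, and let $\mu$ be as in the hypothesis. By Theorem~\ref{th: determined by values on SL(n)(R^n)} it suffices to show $\mu$ vanishes on every double pyramid $[B, -c \ptwo{x}{1}, d \ptwo{y}{1}]$ with $B \in \cP^{n-1}_o$, $c, d > 0$ and $x, y \in \R^{n-1}$.

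The first step is to reduce to straight double pyramids by showing $\mu[B, J] = 0$ for every $B \in \cP^{n-1}_o$ and all $c, d > 0$. Fix $c, d$ and set $\nu(B) = \mu[B, J]$. Decompose $\nu = \sum_\alpha \nu_\alpha$ according to $\Ten{\R^n} = \bigoplus_\alpha U_\alpha$, viewing each $\nu_\alpha(B) \in U_\alpha$ as an element of $\Ten[p-i]{\R^{n-1}}$, where $i$ counts the entries of $\alpha$ equal to $1$. Every $\nu_\alpha$ is a measurable valuation on $\cP^{n-1}_o$. For $\phi \in \PL[n-1]$ the block matrix $\tilde\phi = \ptwotwo{\phi}{0}{0}{1}$ lies in $\PL$ with $\det \tilde\phi = \det \phi$, satisfies $\tilde \phi [B, J] = [\phi B, J]$, and its inverse transpose preserves each $U_\alpha$ and acts as $\phi^{-t}$ on the non-$e_n$ tensor factors. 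Hence every $\nu_\alpha$ is a measurable $\PL[n-1]$-contravariant valuation carrying the same exponent as $\mu$. Whenever $B$ is an $(n-1)$-crosspolytope, $[B, J]$ is an $n$-crosspolytope, so the hypothesis yields $\nu_\alpha(B) = 0$; the induction hypothesis then forces $\nu_\alpha \equiv 0$ for each $\alpha$, and hence $\mu[B, J] = 0$.

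With this reduction, Lemma~\ref{le: f^B, SL(n)-contravariant} produces functions $F^B \colon (\R^{n-1})^2 \to \Ten{\R^n}$ obeying \eqref{eq: f^B(x, y), SL(n)-contravariant} and the sheared Cauchy equation \eqref{eq: f^B(0, x) linear, SL(n)-contravariant}. Restricting $F^B(0, \cdot)$ to an arbitrary line $t \mapsto tv$ in $\R^{n-1}$ converts \eqref{eq: f^B(0, x) linear, SL(n)-contravariant} into a one-variable sheared Cauchy equation of exactly the type treated in Lemma~\ref{le: tensor Cauchy, contravariant}, with the planar shear replaced by $\ptwotwo{\Id}{-s v^t}{0}{1}$. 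The same proof supplies tensors $K^{B, v} \in \Ten{\R^n}$ with
\[
F^B(0, t v) = \int_0^t \ptwotwo{\Id}{-s v^t}{0}{1} \cdot K^{B, v} \, ds,
\]
and comparing the cocycle expansion of $F^B(0, u + v)$ along different lines forces $v \mapsto K^{B, v}$ to be linear, so that $F^B$ is encoded by a single linear map $K^B \colon \R^{n-1} \to \Ten{\R^n}$.

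The main obstacle is to deduce $K^B \equiv 0$; this is the genuinely new content beyond the induction. My plan is to mirror the final stage of the proof of Theorem~\ref{th: vanishes on crosspolytopes implies everywhere, GL(2)}: first exploit $\PL$-contravariance under the scalings $\mathrm{diag}(r, 1, \ldots, 1)$ and under the block embeddings $\ptwotwo{\phi}{0}{0}{1}$ with $\phi \in \PL[n-1]$ to pin down how the components of $K^B$ behave under rescalings and linear transformations of $B$, invoking Theorems~\ref{th: dim 1 even homogeneous} and~\ref{th: dim 1 odd homogeneous} componentwise; then, in direct analogy with the triangle identity of the planar proof, exhibit a single $n$-simplex which can be represented as a double pyramid $[B, -c \ptwo{x}{1}, d \ptwo{y}{1}]$ in two geometrically distinct ways (using the degeneracy that allows a vertex of the base to play the role of an apex), and match the two resulting integral expressions for $\mu$ on this simplex to obtain a linear identity whose only solution is $K^B = 0$. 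Once $K^B \equiv 0$, the whole family $F^B$ vanishes, so $\mu$ vanishes on every double pyramid, and Theorem~\ref{th: determined by values on SL(n)(R^n)} completes the proof.
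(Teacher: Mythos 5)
Your overall architecture is right — induction on dimension with base case Theorem~\ref{th: vanishes on crosspolytopes implies everywhere, GL(2)}, reduction to straight double pyramids $\mu[B,J]=0$ by componentwise induction, then Lemma~\ref{le: f^B, SL(n)-contravariant} to produce $F^B$ satisfying the sheared Cauchy equation. That part tracks the paper. But your final step, which you correctly flag as the genuinely new content, diverges from the paper and is a gap.

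You propose to close the argument by mimicking the triangle identity from the planar proof: constrain the tensors via $1$-dimensional Hadwiger theorems, then write a simplex as a double pyramid in two ways and compare. There is no concrete statement of this identity, no verification that it can still be arranged once the base is a genuinely $(n-1)$-dimensional polytope $B$ rather than an interval, and no reason to expect it yields $K^B = 0$. In fact, the paper does something much cleaner and this cleaner route is \emph{why} $n\geq 3$ is different from $n=2$. Set $G^B(x) = F^B(0,x)$, project onto the subspaces $U_\alpha$, and do a nested induction over the multiindex $\alpha$: modulo lower terms, the sheared Cauchy equation degenerates into the plain Cauchy equation, so $G^B_\alpha$ is linear and can be viewed as a tensor $\tilde G^B\in\Ten[p-i+1]{\R^{n-1}}$; the covariance \eqref{eq: g^{vartheta B}, GL(n)} makes $B\mapsto \tilde G^B$ a measurable $\GL[n-1]$-contravariant valuation. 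The crucial new ingredient, which replaces the planar triangle identity entirely, is a rotation trick only available for $n\geq 3$: for a crosspolytope $B$ and $j\in\{1,\ldots,n-1\}$, pick $k\neq j$ and let $\phi\in\SL$ swap $e_k\leftrightarrow e_n$ (with a sign). Then $\phi[B,-ce_n,d(e_j+e_n)]$ is again of the form $[\tilde B,\tilde J]$, where we already know $\mu$ vanishes. Hence $G^B(e_j)=0$ for all $j$, so $\tilde G^B$ vanishes on crosspolytopes, and the outer induction hypothesis finishes. This is a direct argument that uses the extra room in dimension $\geq 3$; your plan tries to transplant the one tool ($n$-simplex double-counting) that was forced on the authors precisely because that room was absent when $n=2$. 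Your intermediate step of encoding $F^B$ by a single linear map $K^B\colon\R^{n-1}\to\Ten{\R^n}$ by restricting to lines and comparing cocycle expansions is also unnecessary and not fully justified; the componentwise induction over $\alpha$ bypasses it.
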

\begin{proof}
	We will prove the theorem by induction.
	The case $n = 2$ is just a reformulation of
	Theorem \ref{th: vanishes on crosspolytopes implies everywhere, GL(2)}.
	So let $n \geq 3$ and assume that the theorem holds in dimension $n-1$.
	Fix numbers $c, d > 0$.
	For $\alpha \in \{ 0, 1 \}^p$ set
	\[
		\nu(B) = \mu_\alpha [B, J], \quad B \in \cP_o^{n-1} .
	\]
	Since $B \mapsto \nu(B)$ satisfies the induction assumption, it vanishes everywhere.
	Hence,
	\begin{equation}\label{eq: mu[B, J] = 0, GL(n)}
		\mu[B, J] = 0
	\end{equation}
	for all $B \in \cP^{n-1}$ and $c, d > 0$.
	From Lemma \ref{le: f^B, SL(n)-contravariant} we therefore obtain a family of functions $F^B$ such that
	\[
		F^B(x,y) = \mu \left[ B, -c \ptwo x 1, d \ptwo y 1 \right]
	\]
	for all $x, y \in \R^{n-1}$ whenever $B, c, d, x, y$ form a double pyramid.
	Set $G^B(x) = F^B(0, x)$.
	Next, we deduce two properties of $G^B$.
	First, equation \eqref{eq: f^B(0, x) linear, SL(n)-contravariant} becomes
	\begin{equation}\label{eq: G^B linear}
		G^B(x+y) = G^B(x) + \ptwotwo {\Id} {-x^t} 0 1 \cdot G^B(y) . 
	\end{equation}	
	By the $\PL$-contravariance of $\mu$, there exists a $q \in \R$ such that
	\[
		\mu \left[ \phi B, -c \ptwo x 1, d \ptwo y 1 \right] =
		(\det \phi)^q \diag {\phi^{-t}} 1 \cdot
		\mu \left[ B,
		-c \ptwo {\phi^{-1} x} 1,
		d \ptwo {\phi^{-1} y} 1
		\right]
	\]
	for all $\phi \in \PL[n-1]$.
	Therefore,
	\[
		G^{\phi B}(x) =
		(\det \phi)^q \diag {\phi^{-t}} 1 \cdot G^B( \phi^{-1} x ) .
	\]
	Projecting onto the subspace $U_\alpha$, $\alpha \in \{ 0, 1 \}$, immediately proves
	\begin{equation}\label{eq: g^{vartheta B}, GL(n)}
		G^{\phi B}_\alpha(x) = (\det \phi)^q \phi^{-t} \cdot G^B_\alpha ( \phi^{-1} x ) .
	\end{equation}
	Note that by \eqref{eq: f^B(x, y), SL(n)-contravariant} and Theorem \ref{th: determined by values on SL(n)(R^n)}
	it is enough to prove $G^B = 0$ for all $B \in \cP_o^{n-1}$.	
	
	We will show by induction that $G^B_\alpha = 0$.
	Assume that $G^B_\beta = 0$ for all $\beta \in \{ 0, 1 \}^p$ with $\beta < \alpha$,
	which is trivially true for $\alpha = (0, \ldots, 0)$.
	Equation \eqref{eq: G^B linear} and the induction assumption imply
	\[
		G^B_{\alpha}(x+y) = G^B_{\alpha}(x) + G^B_{\alpha}(y) .
	\]
	Clearly, the map $x \mapsto G^B_\alpha(x)$ is measurable.
	If $i$ denotes the number of ones in $\alpha$, then \eqref{eq: tensor cauchy} 
	implies that $G^B_\alpha$ can be viewed as an element of $\Ten[p-i+1]{\R^{n-1}}$, say $\tilde G^B$.
	Equation \eqref{eq: g^{vartheta B}, GL(n)} implies that
	\[
		\tilde G^{\phi B} = (\det \phi)^q \phi^{-t} \cdot \tilde G^B.
	\]
	Hence, $B \mapsto \tilde G^B$ is $\GL[n-1]$-contravariant.
	It is also a measurable valuation because $\mu$ has these properties.
	If we can show that this map vanishes on crosspolytopes, then we can apply our initial induction assumption 
	and the proof is completed.
	
	So let $B \in \cP_o^{n-1}$ be a crosspolytope and fix some $j \in \{ 1, \ldots, n-1 \}$.
	Since $n \geq 3$, we can choose a coordinate $k \in \{1, \ldots, n-1 \} \setminus \{ j \}$.
	Let $\phi \in \SL$ be the map with $e_k \mapsto e_n$ and $e_n \mapsto -e_k$ such that all 
	other canonical basis vectors stay fixed.
	Note that since $B$ is a crosspolytope, there exists a $\tilde B \in \cP_o^{n-1}$ and a line segment $\tilde J$ in the
	span of $e_n$ with
	\[
		\phi \left[ B, -c e_n, d (e_j + e_n) \right] = [\tilde B, \tilde J] .
	\]
	From the definition of $G^B$, the $\SL$-contravariance of $\mu$, and \eqref{eq: mu[B, J] = 0, GL(n)},
	it follows that $G^B(e_j) = 0$.
	In particular, also $G_\alpha^B(e_j) = 0$ and since $x \mapsto G^B_\alpha(x)$ is linear, we conclude that $\tilde G^B = 0$.
\end{proof}

Let us now come back to the symmetric setting.
For $i \in \{ 0, \ldots, p \}$ define subspaces $U_i$ of $\Sym{\R^n}$ by
\[
	U_i = \spn \left \{ x_1 \odot \cdots \odot x_{p-i} \odot e_n^{\odot i} :
	x_1, \ldots, x_{p-i} \in \R^{n-1} \times \{0\} \right \}.
\]
As before, $\Sym{\R^n}$ is the direct sum of these subspaces, i.e.
\[
	\Sym{\R^n} = \bigoplus_{i = 0}^p U_i .
\]
For $C \in \Sym{\R^n}$ we denote by $C_i$ the projection of $C$ onto $U_i$,
and $C_i$ will be viewed as an element of $\Sym[p-i]{\R^{n-1}}$.
We remark that for the planar case $\Sym{\R^2}$, this notation coincides with the one for tensor components used before.

\begin{lemma}\label{le: vanishes on crosspolytopes implies everywhere, SL(n)}
	Let $n \geq 2$ and $\mu \in \ValCon{p}{n}$.
	If $\mu$ vanishes on all crosspolytopes, then it vanishes everywhere.
\end{lemma}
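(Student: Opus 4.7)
The plan is to induct on $n$. For the base case $n = 2$, I invoke Theorem \ref{th: 2-dim, SL(2)-contravariant}: every $\mu \in \ValCon{p}{2}$ is a linear combination of the explicit basis listed there, and the coordinate formulas of Lemma \ref{le: M on crosspolytopes} together with the polar volume formula preceding \eqref{eq: M(B) neq 0} show that those basis elements are already linearly independent as functions on crosspolytopes alone, forcing $\mu \equiv 0$. For $n \geq 3$ I assume the lemma in every rank for dimension $n-1$.

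The first reduction is to show $\mu[B, J] = 0$ for every $B \in \cP^{n-1}_o$ and $c, d > 0$. Using the decomposition $\Sym{\R^n} = \bigoplus_i U_i$ recalled just before the lemma, set $\nu_i(B) := \mu[B, J]_i \in \Sym[p-i]{\R^{n-1}}$. Extending any $\phi \in \SL[n-1]$ to $\tilde\phi = \diag{\phi}{1} \in \SL$, the $\SL$-contravariance of $\mu$ descends to $\SL[n-1]$-contravariance of $\nu_i$, since $\tilde\phi^{-t}$ preserves each $U_i$. As $\nu_i$ is a measurable valuation vanishing on crosspolytopes in $\R^{n-1}$ (because $[B, J]$ is then a crosspolytope in $\R^n$), the induction hypothesis forces $\nu_i \equiv 0$ for every $i$, hence $\mu[B, J] = 0$.

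Lemma \ref{le: f^B, SL(n)-contravariant} now supplies the family $F^B$, and by \eqref{eq: f^B(x, y), SL(n)-contravariant} together with Theorem \ref{th: determined by values on SL(n)(R^n)} it suffices to prove $G^B(x) := F^B(0, x) \equiv 0$. I plan an inner induction on $i$ showing $G^B_i = 0$. The shear $\ptwotwo{\Id}{-x^t}{0}{1}$ fixes $e_n$ and sends $e_k \mapsto e_k - x_k e_n$ for $k < n$, so its action on $\Sym{\R^n}$ maps $U_j$ into $\bigoplus_{j' \geq j} U_{j'}$; projecting \eqref{eq: f^B(0, x) linear, SL(n)-contravariant} onto $U_i$ produces
\[
	G^B_i(x+y) = G^B_i(x) + G^B_i(y) + \text{(terms in $G^B_j(y)$ with $j < i$)},
\]
which under the inner induction hypothesis collapses to Cauchy additivity. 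Invoking \eqref{eq: tensor cauchy}, there is then a tensor $\tilde G^B_i \in \Sym[p-i]{\R^{n-1}} \otimes \R^{n-1} \subseteq \Ten[p-i+1]{\R^{n-1}}$ encoding $G^B_i$ as a linear map.

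The main obstacle is that $\tilde G^B_i$ is only partially symmetric, so the present lemma in dimension $n-1$ does not apply to it; instead I aim at Theorem \ref{th: vanishes on crosspolytopes implies everywhere, GL(n)}, which demands $\PL[n-1]$-contravariance. The key trick is that, even though $\mu$ is only $\SL$-contravariant, the map $B \mapsto \tilde G^B_i$ turns out to be $\PL[n-1]$-contravariant: given $\phi \in \GL[n-1]$ with $\det\phi = \lambda > 0$, I extend to $\tilde\phi \in \SL$ by $\tilde\phi(e_n) = \lambda^{-1}e_n$; since $\tilde\phi^{-t}$ acts on $U_i$ as $\lambda^i \phi^{-t}$ while the apex $d(x;1)$ rescales to $d\lambda^{-1}(\lambda\phi x;1)$, a short computation yields $\tilde G^{\phi B}_i = (\det \phi)^{i-1}\,\phi^{-t} \cdot \tilde G^B_i$. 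To see $\tilde G^B_i$ vanishes on a crosspolytope $B \subset \R^{n-1}$: for each $j < n$, use $n \geq 3$ to pick $k \neq j$ and apply the $\SL$-involution $e_k \mapsto e_n$, $e_n \mapsto -e_k$ to rewrite $[B, -ce_n, d(e_j + e_n)]$ as a double pyramid $[\tilde B, \tilde J]$; the previous paragraph then gives $\mu[\tilde B, \tilde J] = 0$, so $G^B(e_j) = 0$, whence $\tilde G^B_i = 0$ by linearity. Theorem \ref{th: vanishes on crosspolytopes implies everywhere, GL(n)} in dimension $n-1$ now forces $\tilde G^B_i \equiv 0$, closing the inner induction and the proof.
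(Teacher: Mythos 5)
Your proof is correct and follows essentially the same route as the paper: the same $n=2$ base case via Theorem \ref{th: 2-dim, SL(2)-contravariant}, the same reduction to $\mu[B,J]=0$ using the symmetric decomposition $\Sym{\R^n}=\bigoplus_i U_i$ and the dimension-$(n-1)$ induction hypothesis, the same passage via Lemma \ref{le: f^B, SL(n)-contravariant} and Theorem \ref{th: determined by values on SL(n)(R^n)} to the maps $G^B_i$, the same inner induction on $i$ yielding Cauchy additivity, and — crucially — the same recognition that $\tilde G^B$ is only partially symmetric, forcing you to upgrade $\SL$-contravariance to $\PL[n-1]$-contravariance and invoke Theorem \ref{th: vanishes on crosspolytopes implies everywhere, GL(n)} in dimension $n-1$ rather than the present lemma. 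The only stylistic difference is that you spell out explicitly (the shear $\ptwotwo{\Id}{-x^t}{0}{1}$ mapping $U_j$ into $\bigoplus_{j'\geq j}U_{j'}$, the explicit extension $\tilde\phi(e_n)=\lambda^{-1}e_n$) what the paper leaves implicit, and you cite Theorem \ref{th: vanishes on crosspolytopes implies everywhere, GL(n)} directly where the paper's phrase ``our initial induction assumption'' is a little loose.
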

\begin{proof}
	Let $n = 2$. 
	By Theorem \ref{th: 2-dim, SL(2)-contravariant}, we can write $\mu$ as a linear combination 
	\[
		\mu(P) = \sum_{\stackrel{i=0}{i \neq p-1}}^p c_i \rho \cdot M_\rho^{i,p-i}(P)
		+ c_{p+1} M^{p,0}(P^*) , \quad P \in \cP^n_o .
	\]
	The assumption that $\mu$ vanishes on crosspolytopes yields
	\[
		\sum_{\stackrel{i=0}{i \neq p-1}}^p c_i \left[ \rho \cdot M_\rho^{i,p-i}(B) \right]_p
		+ c_{p+1} \left[ M^{p,0}(B^*) \right]_p = 0
	\]
	for all crosspolytopes $B$.
	By \eqref{eq: polar homogeneity} and \eqref{eq: M homogeneous 2 dim} all these operators have different degrees of homogeneity.
	Hence, we can compare coefficients.
	Since by \eqref{eq: M(B) neq 0},
	\[
		\left[ \rho \cdot M_\rho^{i,p-i}(B)\right]_p \quad \text{and} \quad
		\left[ M_\rho^{p,0}(B^*)\right]_p
	\]	
	do not vanish for all crosspolytopes $B$,
	all $c_i$ have to be zero, which in turn settles the case $n=2$.

	Let $n \geq 3$ and assume that the theorem holds in dimension $n-1$.
	Exactly as in the beginning of the proof of Theorem \ref{th: vanishes on crosspolytopes implies everywhere, GL(n)} we obtain
	\[
		\mu[B, J] = 0
	\]
	for all $B \in \cP^{n-1}_o$ and $c, d > 0$.
	
	Again, we can apply Lemma \ref{le: f^B, SL(n)-contravariant} to obtain a family of functions $F^B$ such that
	\[
		F^B(x,y) = \mu \left[ B, -c \ptwo x 1, d \ptwo y 1 \right]
	\]
	for all $x, y \in \R^{n-1}$ whenever $B, c, d, x, y$ form a double pyramid.
	Set $G^B(x) = F^B(0, x)$.
	As before, we now deduce some properties of $G^B$.
	First, equation \eqref{eq: f^B(0, x) linear, SL(n)-contravariant} becomes
	\begin{equation}\label{eq: G^B linear symm}
		G^B(x+y) = G^B(x) + \ptwotwo {\Id} {-x^t} 0 1 \cdot G^B(y) . 
	\end{equation}
	By the $\SL$-contravariance of $\mu$ we have
	\[
		\mu \left[ \phi B, -c \ptwo x 1, d \ptwo y 1 \right] =
		\diag {\phi^{-t}} {\det \phi} \cdot
		\mu \left[ B,
		-c \det \phi \ptwo {\frac 1 {\det \phi} \phi^{-1} x} 1,
		d \det \phi \ptwo {\frac 1 {\det \phi} \phi^{-1} y} 1
		\right]
	\]
	for all $\phi \in \PL[n-1]$.
	By the definition of $G^B$ we therefore get
	\[
		G^{\phi B}(x) =
		\diag {\phi^{-t}} {\det \phi} \cdot G^B \left( \frac 1 {\det \phi} \phi^{-1} x \right) .
	\]
	In particular,
	\begin{equation}\label{eq: g^{vartheta B}, SL(n)}
		G^{\phi B}_i(x) =
		(\det \phi)^i \phi^{-t} \cdot G_i^B \left( \frac 1 {\det \phi} \phi^{-1} x \right) .
	\end{equation}
	By \eqref{eq: f^B(x, y), SL(n)-contravariant} and Theorem \ref{th: determined by values on SL(n)(R^n)}
	it suffices again to prove $G^B = 0$ for all $B \in \cP_o^{n-1}$.
	
	We will show by induction that $G^B_i = 0$, $i \in \{ 0, \ldots, p\}$.
	Assume that $G^B_j = 0$ for all $j \in \{ 0, \ldots, p \}$ with $j < i$,
	which is trivially true for $i = 0$.
	Equation \eqref{eq: G^B linear symm} together with the induction assumption proves
	\[
		G_i^B(x+y) = G_i^B(x) + G_i^B(y).
	\]
	Since $x \mapsto G_i^B(x)$ is also measurable, it can be interpreted as an element of $\Ten[p-i+1]{\R^{n-1}}$, say $\tilde G^B$.
	Note that $\tilde G^B$ need no longer be symmetric.
	Equation \eqref{eq: g^{vartheta B}, SL(n)} implies
	\[
		\tilde G^{\phi B} = (\det \phi)^{i-1} \phi^{-t} \cdot \tilde G^B 
	\]
	for each $\phi \in \PL[n-1]$.
	Thus, $B \mapsto \tilde G^B$ is a measurable $\PL[n-1]$-contravariant valuation.
	With precisely the same argument as at the end of the proof of Theorem \ref{th: vanishes on crosspolytopes implies everywhere, GL(n)}
	it follows that $\tilde G^B$ vanishes for crosspolytopes.
	Our initial induction assumption then implies that $\tilde G^B = 0$, which in turn yields $G^B_i = 0$.
\end{proof}

Before we continue, let us collect some notation.
For $a_1, b_1, \ldots, a_{n-1}, b_{n-1} > 0$ and $c, d > 0$ define $n$ line segments by
\begin{equation}\label{eq: def intervals}
	I_1 = [-a_1 e_1, b_1 e_1], \ldots, I_{n-1} = [-a_{n-1} e_{n-1}, b_{n-1} e_{n-1}] \quad \text{and} \quad J = [-c e_n, d e_n] .
\end{equation}
Furthermore, set
\[
	\tilde I_{n-1} = [-a_{n-1} e_n, b_{n-1} e_n] \quad \text{and} \quad \tilde J = [-c e_{n-1}, d e_{n-1}] .
\]
Finally, define $B = [ I_1, \ldots, I_{n-2}, I_{n-1} ]$ and $\tilde B = [ I_1, \ldots, I_{n-2}, - \tilde J ]$.

\begin{theorem}\label{th: n-dim, SL(n)-contravariant}
	For $n \geq 3$ the following holds.
	\begin{itemize}
		\item
		A basis of $\ValCon{0}{n}$ is given by $\chi$, $V$ and $V \circ \, ^*$.

		\item
    A basis of $\ValCon{1}{n}$ is given by $M^{p, 0} \circ \, ^*$.

		\item
    For $p \geq 2$, a basis of $\ValCon{p}{n}$ is given by $M^{p, 0} \circ \, ^*$ and $M^{0, p} $.
	\end{itemize}
\end{theorem}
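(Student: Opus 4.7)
The plan is to derive Theorem~\ref{th: n-dim, SL(n)-contravariant} from the already-established Theorem~\ref{th: n-dim, SL(n)-covariant} by means of the polarity involution. The central observation is that the assignment $\Psi(\mu)(P) := \mu(P^*)$ defines a linear isomorphism from $\ValCo{p}{n}$ onto $\ValCon{p}{n}$.

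First I would verify that $\Psi$ is well defined. Measurability of $\Psi(\mu)$ follows from the fact that polarity is a homeomorphism of $\cP_o^n$. The valuation property of $\Psi(\mu)$ rests on the standard fact that whenever $P, Q, P \cup Q, P \cap Q \in \cP_o^n$, the polars satisfy $(P \cup Q)^* = P^* \cap Q^*$ and $(P \cap Q)^* = P^* \cup Q^*$, with $P^* \cup Q^*$ automatically a convex polytope containing the origin in its interior. The $\SL$-contravariance of $\Psi(\mu)$ is immediate from \eqref{eq: polarity contravariant}: for $\phi \in \SL$,
\[
	\Psi(\mu)(\phi P) = \mu\bigl( (\phi P)^* \bigr) = \mu(\phi^{-t} P^*) = \phi^{-t} \cdot \mu(P^*) = \phi^{-t} \cdot \Psi(\mu)(P) .
\]
Exchanging the roles of covariance and contravariance in this computation shows that the analogously defined map $\ValCon{p}{n} \to \ValCo{p}{n}$ is a two-sided inverse of $\Psi$, so $\Psi$ is indeed a linear isomorphism.

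Next, I would apply $\Psi$ to the basis of $\ValCo{p}{n}$ furnished by Theorem~\ref{th: n-dim, SL(n)-covariant}. Using that $P^{**} = P$ on $\cP_o^n$, one finds $\Psi(\chi) = \chi$, $\Psi(V) = V \circ \, ^*$ and $\Psi(V \circ \, ^*) = V$ when $p = 0$; $\Psi(M^{1, 0}) = M^{1, 0} \circ \, ^*$ when $p = 1$; and $\Psi(M^{p, 0}) = M^{p, 0} \circ \, ^*$ together with $\Psi(M^{0, p} \circ \, ^*) = M^{0, p}$ when $p \geq 2$. Since $\Psi$ is an isomorphism, these images are bases of $\ValCon{p}{n}$, which is precisely the content of Theorem~\ref{th: n-dim, SL(n)-contravariant}.

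The only potentially delicate point in this argument is the preservation of the valuation property under polarity, which reduces to the convexity of $P^* \cup Q^*$ whenever $P \cup Q$ is convex. This is a classical fact about polar polytopes that is already implicit in the appearance of $V \circ \, ^*$ in Theorem~\ref{AffHadwiger}, so I expect no real obstacle from this step.
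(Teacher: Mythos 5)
Your reduction via the polarity isomorphism $\Psi(\mu)(P) = \mu(P^*)$ is mathematically sound as stated: it does define a linear isomorphism $\ValCo{p}{n} \to \ValCon{p}{n}$, and the image computations $\Psi(M^{p,0}) = M^{p,0} \circ \,^*$, $\Psi(M^{0,p}\circ\,^*) = M^{0,p}$, etc.\ are all correct. The trouble is that the argument is circular within the paper's logical architecture. In this paper, Theorem~\ref{th: n-dim, SL(n)-contravariant} is the result that is established directly, and Theorem~\ref{th: n-dim, SL(n)-covariant} is the one-line corollary: the paper's proof of Theorem~\ref{th: n-dim, SL(n)-covariant} consists precisely of observing that $\mathcal S\mu = \mu\circ\,^*$ is an isomorphism $\ValCo{p}{n}\to\ValCon{p}{n}$ and invoking Theorem~\ref{th: n-dim, SL(n)-contravariant}. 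You have simply run that deduction in the opposite direction, which supplies no new information and leaves the hard classification unproved.

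The actual content of the proof of Theorem~\ref{th: n-dim, SL(n)-contravariant} is a substantive induction over the dimension $n$, which your proposal omits entirely. The key ingredients are: (i) the reduction to showing that a valuation vanishing on crosspolytopes vanishes everywhere, via Lemma~\ref{le: vanishes on crosspolytopes implies everywhere, SL(n)} and Theorem~\ref{th: determined by values on SL(n)(R^n)}; (ii) the base case $n = 3$, which feeds on the planar classification Theorem~\ref{th: 2-dim, SL(2)-contravariant}; and (iii) in the induction step, fixing a segment $J$ along $e_n$, analyzing the maps $B \mapsto \mu_i[B,J]$ on $\cP_o^{n-1}$ as $\SL[n-1]$-contravariant valuations, and then exploiting homogeneity in $J$, the reflection identity~\eqref{eq: mu_i reflection}, the one-dimensional classification Theorems~\ref{th: dim 1 even homogeneous} and~\ref{th: dim 1 odd homogeneous}, and the cross-relation~\eqref{eq: comparing homogeneity grades} to pin down all constants up to the two (resp.\ one, resp.\ three) allowed degrees of freedom. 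To make your approach non-circular, you would need an independent proof of Theorem~\ref{th: n-dim, SL(n)-covariant}, which the paper does not provide; as things stand, your argument establishes only the equivalence of the two theorems, not either of them.
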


\begin{proof}
	We already know three, one, and two linearly independent elements of 
	$\ValCon{0}{n}$, $\ValCon{1}{n}$, and $\ValCon{p}{n}$ for $p \geq 2$, respectively.
	By Lemma \ref{le: vanishes on crosspolytopes implies everywhere, SL(n)} and
	Theorem \ref{th: determined by values on SL(n)(R^n)} it is enough to prove
	that if we restrict the maps in $\ValCon{p}{n}$, $p \geq 0$, to crosspolytopes,
	the resulting spaces are at most three-, one-, and two-dimensional, respectively.

	We will prove this by induction over the dimension.
	Before we do so, let us collect some prerequisites.
	For $i \in \{ 0, \ldots, p \}$ and fixed positive numbers $c$ and $d$ consider the map
	\begin{equation}\label{eq: def mu_i}
		B \mapsto \mu_i [B, J] , \quad B \in \cP_o^{n-1} ,
	\end{equation}
	where $J := [-c e_n, d e_n]$.
	By the $\SL$-contravariance of $\mu$ we obtain
	\[
		\mu[B, r J] = \diag {r^{\frac 1 {n-1}} \Id} {\frac 1 r} \cdot \mu[r^{\frac 1 {n-1}} B, J]
	\]
	for all $r > 0$ and
	\[
		\mu[B, - J] = \diag {\vartheta^t} {-1} \cdot \mu[\vartheta B, J]
	\]
	for every $\vartheta \in \VL$ with $\det \vartheta = -1$.	By projecting onto the subspace $U_i$ we get
	\begin{equation}\label{eq: mu_i homogeneous}
		\mu_i[B, r J] = r^{ \frac{p-i}{n-1} - i} \mu_i[r^{\frac{1}{n-1}} B, J]
	\end{equation}
	and
	\begin{equation}\label{eq: mu_i reflection}
		\mu_i[B, - J] = (-1)^i \vartheta^t \cdot \mu_i[\vartheta B, J] .
	\end{equation}	
	Define intervals as in \eqref{eq: def intervals}.
	By the $\SL$-contravariance of $\mu$ again we have
	\[
		\mu \left[ I_1, \ldots, I_{n-2}, I_{n-1}, \lambda J \right] = 
		\diag {\Id}{\begin{array}{cc} 0 & \lambda \\ - \frac 1 \lambda & 0 \end{array}} \cdot
		\mu \left[ I_1, \ldots, I_{n-2}, - \tilde J, \lambda \tilde I_{n-1} \right]
	\]
	for all $\lambda > 0$.
	With the above definitions of $B$ and $\tilde B$ we therefore get
	\begin{equation}\label{eq: comparing homogeneity grades}
		\left( \mu_0 \right)_i [B, \lambda J ] =
		\lambda^i \left( \mu_i \right)_0 [ \tilde B, \lambda \tilde I_{n-1} ] .
	\end{equation}
	Here, the second indices denote projections in $\Sym{\R^{n-1}}$ and $\Sym[p-i]{\R^{n-1}}$, respectively.
	In other words, on the left hand side we have the component with $0$ times $e_n$ and $i$ times $e_{n-1}$,
	and on the right hand side the component with $i$ times $e_n$ and $0$ times $e_{n-1}$.

	Now, we can start with our induction.
	Let $n = 3$.
	From the $\SL[3]$-contravariance of $\mu$, it follows that the map \eqref{eq: def mu_i} is $\SL[2]$-contravariant.
	By Theorem \ref{th: 2-dim, SL(2)-contravariant} and the convention that $\mu_i$ will be viewed 
	as an element of $\Ten[p-i]{\R^2}$, we therefore have
	\begin{equation}\label{eq: mu_i sum}
		\mu_i [B, J] =
		l_i^J M_{\rho}^{p-i, 0}(B^*) + \sum_{j=0}^{p-i} k_{i,j}^J \, \rho \cdot M_{\rho}^{j, p-i-j}(B)
	\end{equation}
	for $i \neq p$ and
	\begin{equation}\label{eq: mu_p sum}
		\mu_p [B, J] = l_p^J V(B^*) + k_{p,0}^J V(B) + m_p^J ,
	\end{equation}
	for $i = p$, where $k_{i,j}^J, l_i^J, m_p^J \in \R$ and $k_{i, p-i-1}^J = 0$.
	Note that the operators on the right hand side are now operators in dimension $2$.

	Let $i \neq p$.
	If we plug \eqref{eq: mu_i sum} into \eqref{eq: mu_i homogeneous} and use the appropriate degrees of homogeneity
	from \eqref{eq: M homogeneous 2 dim} and \eqref{eq: polar homogeneity}, we obtain
	\begin{multline*}
		l_i^{rJ} M_{\rho}^{p-i, 0}(B^*) + \sum_{j=0}^{p-i} k_{i,j}^{rJ} \, \rho \cdot M_{\rho}^{j, p-i-j}(B) =\\
		{} r^{-1-i}l_i^J M_{\rho}^{p-i, 0}(B^*) + \sum_{j=0}^{p-i} r^{1-i+j}k_{i,j}^J \, \rho \cdot M_{\rho}^{j, p-i-j}(B) .
	\end{multline*}
	As in the proof of Lemma \ref{le: vanishes on crosspolytopes implies everywhere, SL(n)},
	we can compare coefficients in this equation.
	This shows that $J \mapsto l_i^J$ and $J \mapsto k_{i,j}^J$ are homogeneous.
	For $i = p$ we can argue similarly.
	We conclude that $J \mapsto l_i^J$, $J \mapsto k_{i,j}^J$ and $J \mapsto m_p^J$ have degrees of homogeneity
	$-1-i$, $1-i+j$, and $-p$, respectively.
	
	With the same procedure, we obtain by \eqref{eq: mu_i reflection} and \eqref{eq: M_rho signum covariant} that
	\begin{equation}\label{eq: l k even odd}
		l_i^{-J} = (-1)^i l_i^J , \quad k_{i,j}^{-J} = (-1)^{p+j} k_{i,j}^J \quad \text{and} \quad m_p^{-J} = (-1)^i m_p^J .
	\end{equation}
	In particular, the maps $J \mapsto l_i^J$, $J \mapsto k_{i,j}^J$ and $J \mapsto m_p^J$ are even or odd. 
	Comparing coefficients again, also proves that these maps
	$l_i^J$, $k_{i,j}^J$ and $m_p^J$ are measurable valuations with respect to $J$. 
	From Theorems \ref{th: dim 1 even homogeneous} and \ref{th: dim 1 odd homogeneous}
	we deduce that $l_i^J$, $k_{i,j}^J$ and $m_p^J$ are determined by constants
	$l_i \in \R$, $k_{i,j} \in \R$ and $m_p \in \R$, respectively.

	For $p = 0$, we are already done by \eqref{eq: mu_p sum}.
	So assume $p \geq 1$.
	If we plug representation \eqref{eq: mu_i sum} and \eqref{eq: mu_p sum}
	into \eqref{eq: comparing homogeneity grades} for $i = p$ and use the homogeneity of
	$k_{i,j}^J$, $l_i^J$ and $m_p^J$ with respect to $J$, we obtain
 	\[
		\lambda^{-1} l_0^{J} \left[ M_{\rho}^{p, 0}(B^*) \right]_p + 
		\sum_{j=0}^{p} \lambda^{1+j} k_{0,j}^{J} \left[  \rho \cdot M_{\rho}^{j, p-j}(B) \right]_p =
		\lambda^{-1}l_p^{\tilde I_2} V(\tilde B^*) + \lambda k_{p,0}^{\tilde I_2} V(\tilde B) + m_p^{\tilde I_2} .
	\]
	Therefore, $m_p = 0$ and $k_{0,j} = 0$, $j \neq 0$.
	Furthermore, $k_{p,0}$ is a multiple of $k_{0,0}$ and $l_p$ is a multiple of $l_0$.

	In the case $p = 1$ we know $k_{0,0} = 0$ from \eqref{eq: mu_i sum} and we are done.
	Assume $p \geq 2$.
	We already know that
	\[
		\mu_0 [B, J] = l_0^J M^{p, 0}(B^*) + k_{0,0}^J M^{0, p}(B).
	\]
	If we plug this and representation \eqref{eq: mu_i sum} into \eqref{eq: comparing homogeneity grades} for $i \neq p$ and use the homogeneity of
	$k_{i,j}^J$, $l_i^J$ and $m_p^J$ with respect to $J$, we obtain
	\begin{multline*}
		\lambda^{-1} l_0^{J} \left[ M_{\rho}^{p,0}(B^*) \right]_i + 
		\lambda k_{0,0}^J \left[ M_{\rho}^{0, p}(B) \right]_i  = \\
		\lambda^{-1} l_i^{\tilde I_2} \left[ M_{\rho}^{p-i,0}(\tilde B^*) \right]_0+
		\sum_{j=0}^{p-i} \lambda^{1+j} k_{i,j}^{\tilde I_2} \left[ \rho \cdot M_{\rho}^{j, p-i-j}(\tilde B) \right]_0 .
	\end{multline*}
	Therefore, $k_{i,j} = 0$, $j \neq 0$.
	Furthermore, $k_{i,0}$ is a multiple of $k_{0,0}$ and $l_i$ is a multiple of $l_0$.
	For the preceding argument, note that
	$\left[ M^{j, p-i-j}_\rho(\tilde B) \right]_0$, $j \in \{ 0, \ldots, p-i \} \setminus \{ p-i-1 \}$, and $\left[ M^{p-i, 0}_\rho(\tilde B^*) \right]_0$ do not vanish
	for all choices of intervals by \eqref{eq: M(B) neq 0} and the $\SL[2]$-contravariance of these operators.
	This completes the proof for $n = 3$.

	Next, assume $n > 3$ and that the theorem already holds in dimension $n-1$.
	From the $\SL$-contravariance of $\mu$, it follows that the map \eqref{eq: def mu_i} is $\SL[n-1]$-contravariant.
	By the induction assumption we have
	\begin{equation}\label{eq: mu_i sum n dim}
		\mu_i [B, J] =
		l_i^J M^{p-i, 0}(B^*) + k_i^J  M^{0, p-i}(B)
	\end{equation}
	for $i \neq p$ and
	\begin{equation}\label{eq: mu_p sum n dim}
		\mu_p [B, J] = l_p^J V(B^*) + k_p^J V(B) + m_p^J ,
	\end{equation}
	for $i = p$, where $l_i^J, k_i^J, m_p^J \in \R$ and $k_{p-1}^J = 0$.
	Note that the operators on the right hand side are now operators in dimension $n-1$.

	Let $i \neq p$.
	If we plug \eqref{eq: mu_i sum n dim} into \eqref{eq: mu_i homogeneous} and use the appropriate degrees of homogeneity
	from \eqref{eq: M homogeneous n dim} and \eqref{eq: polar homogeneity}, we obtain
	\[
		l_i^{rJ} M^{p-i, 0}(B^*) + k_i^{rJ}  M^{0, p-i}(B) =
		r^{-1-i}l_i^J M^{p-i, 0}(B^*) + r^{1-i}k_i^J  M^{0, p-i}(B) .
	\]
  As before, using \eqref{eq: M homogeneous n dim} and \eqref{eq: polar homogeneity}, we can compare coefficients.
	This shows that $J \mapsto l_i^J$ and $J \mapsto k_i^J$ are homogeneous.
	For $i = p$ we can argue similarly.
	We conclude that $J \mapsto l_i^J$, $J \mapsto k_i^J$ and $J \mapsto m_p^J$ have degrees of homogeneity
	$-1-i$, $1-i$, and $-p$, respectively.
	
	With the same procedure, we obtain by \eqref{eq: mu_i reflection} and \eqref{eq: M covariant, n dim} that
	\begin{equation}\label{eq: l k even odd n dim}
		l_i^{-J} = (-1)^i l_i^J , \quad k_i^{-J} = (-1)^i k_i^J \quad \text{and} \quad m_p^{-J} = (-1)^i m_p^J .
	\end{equation}
	In particular, the maps $J \mapsto l_i^J$, $J \mapsto k_i^J$ and $J \mapsto m_p^J$ are even or odd. 
  As before we can argue that these maps are measurable valuations. 
	From Theorems \ref{th: dim 1 even homogeneous} and \ref{th: dim 1 odd homogeneous}
	we deduce that $l_i^J$, $k_i^J$ and $m_p^J$ are determined by constants
	$l_i \in \R$, $k_i \in \R$ and $m_p \in \R$, respectively.

	For $p = 0$, we are already done by \eqref{eq: mu_p sum n dim}.
	So assume $p \geq 1$.
	If we plug representation \eqref{eq: mu_i sum n dim} and \eqref{eq: mu_p sum n dim}
	into \eqref{eq: comparing homogeneity grades} for $i = p$ and use the homogeneity of
	$l_i^J$, $k_i^J$ and $m_p^J$ with respect to $J$, we obtain
	\[
		\lambda^{-1} l_0^J \left[ M^{p,0}(B^*) \right]_p + \lambda k_0^J \left[ M^{0,p}(B) \right]_p =  
		\lambda^{-1} l_p^{\tilde I_{n-1}} V(\tilde B^*) + 
		\lambda k_p^{\tilde I_{n-1}} V(\tilde B) + m_p^{\tilde I_{n-1}} .
	\]
	Therefore, $m_p = 0$.
	Furthermore, $k_p$ is a multiple of $k_0$ and $l_p$ is a multiple of $l_0$.

	In the case $p = 1$ we know $k_0 = 0$ from \eqref{eq: mu_i sum n dim} and we are done.
	Assume $p \geq 2$.
	If we plug representation \eqref{eq: mu_i sum n dim} into \eqref{eq: comparing homogeneity grades} for $i \neq p$ and use the homogeneity of
	$l_i^J$ and $k_i^J$ with respect to $J$, we obtain
	\begin{multline*}
		\lambda^{-1} l_0^J \left[ M^{p,0}(B^*) \right]_i + \lambda k_0^J \left[ M^{0,p}(B) \right]_i = \\
		\lambda^{-1} l_i^{\tilde I_{n-1}} \left[ M^{p-i,0}(B^*) \right]_0 +
		\lambda k_i^{\tilde I_{n-1}} \left[ M^{0,p-i}(B) \right]_0 .
	\end{multline*}
	As before, using \eqref{eq: M(B) neq 0, n dim}, we conclude that $k_i$ is a multiple of $k_0$ and $l_i$ is a multiple of $l_0$.
\end{proof}

Finally, we prove our classification for $\ValCo{p}{n}$.

\begin{proof}
	[
		Proof of Theorem \ref{th: n-dim, SL(n)-covariant}
	]
	The map $\mathcal S \colon \ValCo{p}{n} \to \ValCon{p}{n}$ defined by
	\[
		\mathcal S \mu = \mu \circ \, ^*
	\]
	is an isomorphism. The result now follows directly from Theorem \ref{th: n-dim, SL(n)-contravariant}.
\end{proof}

	\section{Acknowledgements}
The work of the second author was supported by the ETH Zurich Postdoctoral Fellowship Program
and the Marie Curie Actions for People COFUND Program.
The second author wants to especially thank Michael Eichmair and Horst Knörrer for being his mentors at ETH Zurich.

	\begin{bibdiv}
		\begin{biblist}
			\bib{Alesker99}{article}{
   author={Alesker, S.},
   title={Continuous rotation invariant valuations on convex sets},
   journal={Ann. of Math. (2)},
   volume={149},
   date={1999},
   number={3},
   pages={977--1005},
}

\bib{Alesker00a}{article}{
   author={Alesker, S.},
   title={Description of continuous isometry covariant valuations on convex
   sets},
   journal={Geom. Dedicata},
   volume={74},
   date={1999},
   number={3},
   pages={241--248},
}

\bib{AleBerSchu}{article}{
   author={Alesker, Semyon},
   author={Bernig, Andreas},
   author={Schuster, Franz E.},
   title={Harmonic analysis of translation invariant valuations},
   journal={Geom. Funct. Anal.},
   volume={21},
   date={2011},
   number={4},
   pages={751--773},
}

\bib{BeisArt02}{article}{
   author={Beisbart, C.},
   author={Dahlke, R.},
   author={Mecke, K.},
   author={Wagner, H.},
   title={Vector- and tensor-valued descriptors for spatial patterns},
   journal={Lecture Notes in Physics},
   volume={600},
   date={2002},
   pages={238--260},
}

\bib{BerHug15}{article}{
   author={Bernig, A.},
   author={Hug, D.},
   title={Kinematic formulas for tensor valuations},
   journal={J. Reine Angew. Math.},
   date={to appear},
}

\bib{BorLutYanZha}{article}{
   author={B{\"o}r{\"o}czky, K{\'a}roly J.},
   author={Lutwak, Erwin},
   author={Yang, Deane},
   author={Zhang, Gaoyong},
   title={The logarithmic Minkowski problem},
   journal={J. Amer. Math. Soc.},
   volume={26},
   date={2013},
   number={3},
   pages={831--852},
}

\bib{CamGro02adv}{article}{
   author={Campi, S.},
   author={Gronchi, P.},
   title={The $L^p$-Busemann-Petty centroid inequality},
   journal={Adv. Math.},
   volume={167},
   date={2002},
   number={1},
   pages={128--141},
}

\bib{ChoWan06}{article}{
   author={Chou, Kai-Seng},
   author={Wang, Xu-Jia},
   title={The $L_p$-Minkowski problem and the Minkowski problem in
   centroaffine geometry},
   journal={Adv. Math.},
   volume={205},
   date={2006},
   number={1},
   pages={33--83},
}

\bib{CLYZ}{article}{
   author={Cianchi, Andrea},
   author={Lutwak, Erwin},
   author={Yang, Deane},
   author={Zhang, Gaoyong},
   title={Affine Moser-Trudinger and Morrey-Sobolev inequalities},
   journal={Calc. Var. Partial Differential Equations},
   volume={36},
   date={2009},
   number={3},
   pages={419--436},
}

\bib{garhugwei13}{article}{
   author={Gardner, Richard J.},
   author={Hug, Daniel},
   author={Weil, Wolfgang},
   title={The Orlicz-Brunn-Minkowski theory: a general framework, additions,
   and inequalities},
   journal={J. Differential Geom.},
   volume={97},
   date={2014},
   number={3},
   pages={427--476},
}

\bib{GarHugWeiYe15}{article}{
   author={Gardner, Richard J.},
   author={Hug, Daniel},
   author={Weil, Wolfgang},
   author={Ye, Deping},
   title={The dual Orlicz-Brunn-Minkowski theory},
   journal={J. Math. Anal. Appl.},
   volume={430},
   date={2015},
   number={2},
   pages={810--829},
}

\bib{Gar95}{book}{
   author={Gardner, Richard J.},
   title={Geometric tomography},
   series={Encyclopedia of Mathematics and its Applications},
   volume={58},
   edition={2},
   publisher={Cambridge University Press, Cambridge},
   date={2006},
   pages={xxii+492},
}

\bib{Gruber:CDG}{book}{
   author={Gruber, Peter M.},
   title={Convex and discrete geometry},
   series={Grundlehren der Mathematischen Wissenschaften [Fundamental
   Principles of Mathematical Sciences]},
   volume={336},
   publisher={Springer, Berlin},
   date={2007},
   pages={xiv+578},
}

\bib{Hab11}{article}{
   author={Haberl, Christoph},
   title={Minkowski valuations intertwining with the special linear group},
   journal={J. Eur. Math. Soc. (JEMS)},
   volume={14},
   date={2012},
   number={5},
   pages={1565--1597},
}

\bib{HabLutYanZha09}{article}{
   author={Haberl, Christoph},
   author={Lutwak, Erwin},
   author={Yang, Deane},
   author={Zhang, Gaoyong},
   title={The even Orlicz Minkowski problem},
   journal={Adv. Math.},
   volume={224},
   date={2010},
   number={6},
   pages={2485--2510},
}

\bib{habpar13}{article}{
   author={Haberl, Christoph},
   author={Parapatits, Lukas},
   title={The centro-affine Hadwiger theorem},
   journal={J. Amer. Math. Soc.},
   volume={27},
   date={2014},
   number={3},
   pages={685--705},
}

\bib{habpar13a}{article}{
   author={Haberl, Christoph},
   author={Parapatits, Lukas},
   title={Valuations and surface area measures},
   journal={J. Reine Angew. Math.},
   volume={687},
   date={2014},
   pages={225--245},
}

\bib{HabParMoments}{article}{
   author={Haberl, C.},
   author={Parapatits, L.},
   title={Moments and valuations},
   journal={Amer. J. Math.},
   date={to appear},
}

\bib{HS09jdg}{article}{
   author={Haberl, Christoph},
   author={Schuster, Franz E.},
   title={General $L_p$ affine isoperimetric inequalities},
   journal={J. Differential Geom.},
   volume={83},
   date={2009},
   number={1},
   pages={1--26},
}

\bib{HSX}{article}{
   author={Haberl, Christoph},
   author={Schuster, Franz E.},
   author={Xiao, Jie},
   title={An asymmetric affine P\'olya-Szeg\"o principle},
   journal={Math. Ann.},
   volume={352},
   date={2012},
   number={3},
   pages={517--542},
}

\bib{HugSchLocTen}{article}{
   author={Hug, Daniel},
   author={Schneider, Rolf},
   title={Local tensor valuations},
   journal={Geom. Funct. Anal.},
   volume={24},
   date={2014},
   number={5},
   pages={1516--1564},
}

\bib{hugschsch08}{article}{
   author={Hug, Daniel},
   author={Schneider, Rolf},
   author={Schuster, Ralph},
   title={Integral geometry of tensor valuations},
   journal={Adv. in Appl. Math.},
   volume={41},
   date={2008},
   number={4},
   pages={482--509},
}

\bib{Ludwig:moment}{article}{
   author={Ludwig, Monika},
   title={Moment vectors of polytopes},
   journal={Rend. Circ. Mat. Palermo (2) Suppl.},
   number={70},
   date={2002},
   pages={123--138},
}

\bib{Lud02advval}{article}{
   author={Ludwig, Monika},
   title={Valuations of polytopes containing the origin in their interiors},
   journal={Adv. Math.},
   volume={170},
   date={2002},
   number={2},
   pages={239--256},
}

\bib{Lud03}{article}{
   author={Ludwig, Monika},
   title={Ellipsoids and matrix-valued valuations},
   journal={Duke Math. J.},
   volume={119},
   date={2003},
   number={1},
   pages={159--188},
}

\bib{Lud05}{article}{
   author={Ludwig, Monika},
   title={Minkowski valuations},
   journal={Trans. Amer. Math. Soc.},
   volume={357},
   date={2005},
   number={10},
   pages={4191--4213},
}

\bib{Lud09}{article}{
   author={Ludwig, Monika},
   title={General affine surface areas},
   journal={Adv. Math.},
   volume={224},
   date={2010},
   number={6},
   pages={2346--2360},
}

\bib{Lud11}{article}{
   author={Ludwig, Monika},
   title={Fisher information and matrix-valued valuations},
   journal={Adv. Math.},
   volume={226},
   date={2011},
   number={3},
   pages={2700--2711},
}

\bib{LR10}{article}{
   author={Ludwig, Monika},
   author={Reitzner, Matthias},
   title={A classification of ${\rm SL}(n)$ invariant valuations},
   journal={Ann. of Math. (2)},
   volume={172},
   date={2010},
   number={2},
   pages={1219--1267},
}

\bib{LudXiaZha09}{article}{
   author={Ludwig, Monika},
   author={Xiao, Jie},
   author={Zhang, Gaoyong},
   title={Sharp convex Lorentz-Sobolev inequalities},
   journal={Math. Ann.},
   volume={350},
   date={2011},
   number={1},
   pages={169--197},
}

\bib{Lut93b}{article}{
   author={Lutwak, Erwin},
   title={The Brunn-Minkowski-Firey theory. I. Mixed volumes and the
   Minkowski problem},
   journal={J. Differential Geom.},
   volume={38},
   date={1993},
   number={1},
   pages={131--150},
}

\bib{Lut96}{article}{
   author={Lutwak, Erwin},
   title={The Brunn-Minkowski-Firey theory. II. Affine and geominimal
   surface areas},
   journal={Adv. Math.},
   volume={118},
   date={1996},
   number={2},
   pages={244--294},
}

\bib{Lutal2012}{article}{
   author={Lutwak, Erwin},
   author={Lv, Songjun},
   author={Yang, Deane},
   author={Zhang, Gaoyong},
   title={Extensions of Fisher information and Stam's inequality},
   journal={IEEE Trans. Inform. Theory},
   volume={58},
   date={2012},
   number={3},
   pages={1319--1327},
}

\bib{LutYanZha00jdg}{article}{
   author={Lutwak, Erwin},
   author={Yang, Deane},
   author={Zhang, Gaoyong},
   title={$L_p$ affine isoperimetric inequalities},
   journal={J. Differential Geom.},
   volume={56},
   date={2000},
   number={1},
   pages={111--132},
}

\bib{LutYanZha00duke}{article}{
   author={Lutwak, Erwin},
   author={Yang, Deane},
   author={Zhang, Gaoyong},
   title={A new ellipsoid associated with convex bodies},
   journal={Duke Math. J.},
   volume={104},
   date={2000},
   number={3},
   pages={375--390},
}

\bib{LutYanZha02duke}{article}{
   author={Lutwak, Erwin},
   author={Yang, Deane},
   author={Zhang, Gaoyong},
   title={The Cramer-Rao inequality for star bodies},
   journal={Duke Math. J.},
   volume={112},
   date={2002},
   number={1},
   pages={59--81},
}

\bib{LutYanZha09jdg}{article}{
   author={Lutwak, Erwin},
   author={Yang, Deane},
   author={Zhang, Gaoyong},
   title={Orlicz centroid bodies},
   journal={J. Differential Geom.},
   volume={84},
   date={2010},
   number={2},
   pages={365--387},
}

\bib{LutYanZha09badv}{article}{
   author={Lutwak, Erwin},
   author={Yang, Deane},
   author={Zhang, Gaoyong},
   title={Orlicz projection bodies},
   journal={Adv. Math.},
   volume={223},
   date={2010},
   number={1},
   pages={220--242},
}

\bib{Par10a}{article}{
   author={Parapatits, Lukas},
   title={${\rm SL}(n)$-contravariant $L_p$-Minkowski valuations},
   journal={Trans. Amer. Math. Soc.},
   volume={366},
   date={2014},
   number={3},
   pages={1195--1211},
}

\bib{Sch13}{book}{
   author={Schneider, Rolf},
   title={Convex bodies: the Brunn-Minkowski theory},
   series={Encyclopedia of Mathematics and its Applications},
   volume={151},
   edition={Second expanded edition},
   publisher={Cambridge University Press, Cambridge},
   date={2014},
   pages={xxii+736},
}

\bib{SchAl2010}{article}{
   author={Schr{\"o}der-Turk, G. E.},
   author={Kapfer, S.},
   author={Breidenbach, B.},
   author={Beisbart, C.},
   author={Mecke, K.},
   title={Tensorial Minkowski functionals and anisotropy measures for planar
   patterns},
   journal={J. Microsc.},
   volume={238},
   date={2010},
   number={1},
   pages={57--74},
}

\bib{SchrAl2011}{article}{
   author={Schr{\"o}der-Turk, G. E.},
   author={Mickel, W.},
   author={Kapfer, S. C.},
   author={Klatt, M. A.},
   author={Schaller, F. M.},
   author={Hoffmann, M. J. F.},
   author={Kleppmann, N.},
   author={Armstrong, P},
   author={Inayat, A.},
   author={Hug, D.},
   author={Reichelsdorfer, M.},
   author={Peukert, W.},
   author={Schwieger, W.},
   author={Mecke, K.},
   title={Minkowski tensor shape analysis of cellular, granular and porous
   structures},
   date={2011},
   journal={Advanced Materials, Special Issue: Hierarchical Structures
   Towards Functionality},
   volume={23},
   pages={2535--2553},
}

\bib{SchrAl2013}{article}{
   author={Schr{\"o}der-Turk, G. E.},
   author={Mickel, W.},
   author={Kapfer, S. C.},
   author={Schaller, F. M.},
   author={Breidenbach, B.},
   author={Hug, D.},
   author={Mecke, K.},
   title={Minkowski tensors of anisotropic spatial structure},
   journal={New J. Phys.},
   volume={15},
   date={2013},
   number={August},
   pages={083028, 38},
}

\bib{Schduke}{article}{
   author={Schuster, Franz E.},
   title={Crofton measures and Minkowski valuations},
   journal={Duke Math. J.},
   volume={154},
   date={2010},
   number={1},
   pages={1--30},
}

\bib{SchWa15}{article}{
   author={Schuster, F.},
   author={Wannerer, T.},
   title={Even Minkowski valuations},
   journal={Amer. J. Math.},
   date={to appear},
}

\bib{wannerer13}{article}{
   author={Wannerer, Thomas},
   title={Integral geometry of unitary area measures},
   journal={Adv. Math.},
   volume={263},
   date={2014},
   pages={1--44},
}

\bib{XiJinLeng14}{article}{
   author={Xi, Dongmeng},
   author={Jin, Hailin},
   author={Leng, Gangsong},
   title={The Orlicz Brunn-Minkowski inequality},
   journal={Adv. Math.},
   volume={260},
   date={2014},
   pages={350--374},
}

\bib{Zhu2015}{article}{
   author={Zhu, Guangxian},
   title={The $L_p$ Minkowski problem for polytopes for $0<p<1$},
   journal={J. Funct. Anal.},
   volume={269},
   date={2015},
   number={4},
   pages={1070--1094},
}

\bib{ZhuZhouXu14}{article}{
   author={Zhu, Baocheng},
   author={Zhou, Jiazu},
   author={Xu, Wenxue},
   title={Dual Orlicz-Brunn-Minkowski theory},
   journal={Adv. Math.},
   volume={264},
   date={2014},
   pages={700--725},
}

\bib{ZouXiong14}{article}{
   author={Zou, Du},
   author={Xiong, Ge},
   title={Orlicz-John ellipsoids},
   journal={Adv. Math.},
   volume={265},
   date={2014},
   pages={132--168},
}

		\end{biblist}
	\end{bibdiv}

	\bigskip \goodbreak \noindent
\begin{minipage}{0.63\textwidth}
	Christoph Haberl \\
	Vienna University of Technology \\
	Institute of Discrete Mathematics and Geometry \\
	Wiedner Hauptstraße 8--10/104 \\
	1040 Wien, Austria \\
	christoph.haberl@tuwien.ac.at
\end{minipage}
\begin{minipage}{0.35\textwidth}
	Lukas Parapatits \\
	ETH Zurich \\
	Department of Mathematics \\
	Rämistrasse 101 \\
	8092 Zürich, Switzerland \\
	lukas.parapatits@math.ethz.ch
\end{minipage}

\end{document}